\theoremstyle{remark}
\theoremstyle{plain}
\newtheorem{lemma}{Lemma}
\newtheorem{theorem}{Theorem}
\newtheorem{proposition}{Proposition}
\newcommand{\p}{\mathbb{P}}
\newcommand{\E}{\mathbb{E}}
\newcommand{\iid}{\stackrel{iid}{\sim}}
\newcommand{\br}[1]{\left( #1 \right)}
\newcommand{\cbr}[1]{\left\{ #1 \right\}}
\newcommand{\pbr}[1]{\p\left( #1 \right)}
\newcommand{\ebr}[1]{\exp\left( #1 \right)}
\newcommand{\abs}[1]{\left| #1 \right|}
\newcommand{\mathr}{\mathbb{R}}
\newcommand{\indic}[1]{{\mathbb{I}\left\{{#1}\right\}}}
\newcommand{\iprod}[2]{\left \langle #1, #2 \right\rangle}
\newcommand{\norm}[1]{\left\|{#1} \right\|}
\newcommand{\fnorm}[1]{\norm{#1}_{\rm F}}
\newcommand{\argmin}{\mathop{\rm argmin}}
\newcommand{\argmax}{\mathop{\rm argmax}}
\newcommand{\normf}[1]{\|{#1}\|_{\rm F}}
\newcommand{\im}{{\rm Im}}
\newcommand{\re}{{\rm Re}}
\newcommand{\one}{\mathds{1}}
\def\H{{ \mathrm{\scriptscriptstyle H} }}
\def\T{{ \mathrm{\scriptscriptstyle T} }}
\newcommand{\mathc}{\mathbb{C}}
\newcommand{\maths}{\mathbb{S}}
\newcommand{\mathx}{\mathcal{X}}
\newcommand{\mathw}{\mathcal{W}}
\newcommand{\matho}{\mathcal{O}}
\newcommand{\mathp}{\mathcal{P}}
\newcommand{\ellod}{\ell^\text{od}}
\renewcommand{\tilde}{\widetilde}
\renewcommand{\hat}{\widehat}
\DeclareFontFamily{U}{mathx}{}
\DeclareFontShape{U}{mathx}{m}{n}{<-> mathx10}{}
\DeclareSymbolFont{mathx}{U}{mathx}{m}{n}
\DeclareMathAccent{\widecheck}{0}{mathx}{"71}
\renewcommand{\check}{\widecheck}
\title{Exact Minimax Optimality of Spectral Methods in Phase Synchronization and Orthogonal Group Synchronization}
\author{Anderson Ye Zhang\\
~\\
University of Pennsylvania
}
\begin{document}
\maketitle

\begin{abstract}
We study the performance of the spectral method for  the phase synchronization problem with additive Gaussian noises and incomplete data. The spectral method utilizes the leading eigenvector of the data matrix followed by a normalization step. We prove that it achieves the minimax lower bound of the problem with a matching leading constant under a squared $\ell_2$ loss. This shows that the spectral method has the same performance as more sophisticated procedures including maximum likelihood estimation, generalized power method,  and semidefinite programming, as long as consistent parameter estimation is possible.
To establish our result, we first have a novel choice of the population eigenvector, which enables us to establish the exact recovery of the spectral method when there is no additive noise. 
We then develop a new perturbation analysis toolkit for the leading eigenvector and show  it can be well-approximated by its first-order approximation with a small $\ell_2$ error. We further extend our analysis to establish the exact minimax optimality of the spectral method for the orthogonal group synchronization.
\end{abstract}

\section{Introduction}\label{sec:intro}
We consider the phase synchronization problem with additive Gaussian noises and incomplete data \cite{abbe2017group, bandeira2017tightness, zhong2018near, gao2021exact}. Let $z^*_1,\ldots,z^*_n \in\mathbb{C}_1$ where $\mathbb{C}_1:= \{x\in\mathbb{C}:\abs{z}=1\}$, the set of all unit complex numbers. Then each $z^*_j$ can be written equivalently as $e^{i\theta^*_j}$ for some phase (or angle) $\theta^*_j\in[0,2\pi)$ . For each $1\leq j<k\leq n$, the observation  $X_{jk}\in\mathbb{C}$ is missing at random. Let $A_{jk}\in\{0,1\}$ and $X_{jk}$ satisfy
\begin{align}\label{eqn:model}
X_{jk} :=
\begin{cases}
{z_j^*\overline{z_k^*} + \sigma W_{jk}},\text{ if }A_{jk}=1,\\
0,\quad\quad\quad\quad\quad\text{ if }A_{jk}=0,
\end{cases}
\end{align}
where $A_{jk}\sim\text{Bernoulli}(p)$ and $W_{jk}\sim \mathcal{CN}(0,1)$. That is, each $X_{jk}$ is missing with probability $1-p$ and is denoted as 0. If it is not missing, it is equal to $z_j^*\overline{z_k^*}$ with an additive noise $\sigma W_{jk}$ where $W_{jk}$ follows the standard complex Gaussian distribution: $\re(W_{jk}),\im(W_{jk})\sim \mathcal{N}(0,1/2)$ independently. Each $A_{jk}$ is the indicator of whether $X_{jk}$ is observed or not.
We assume all random variables $\{A_{jk}\}_{1\leq j<k\leq n},\{W_{j,k}\}_{1\leq j<k\leq n}$ are independent of each other. The goal is to estimate the phase vector $z^* := (z_1^*,\ldots, z_n^*)\in\mathbb{C}_1^n$ from $\{A_{jk}\}_{1\leq j<k\leq n}$ and $\{X_{jk}\}_{1\leq j<k\leq n}$.

The observations can be seen as entries of a matrix $X\in\mathbb{C}^{n\times n}$ with $X_{jj}:=0$ and $X_{kj}:=\overline{X_{jk}}$ for any $1\leq j < k\leq n$. Define $A_{jj}:=0$ and $A_{kj}:=A_{jk}$ for  all $1\leq j<k\leq n$. Then the matrix $A\in\{0,1\}^{n\times n}$ can be interpreted as the adjacency matrix of an Erd\"{o}s-R\'enyi random graph with edge probability $p$. Define $W\in\mathc^{n\times n}$ such that $W_{jj}:=0$ and $W_{kj}:=\overline{W_{jk}}$ for  all $1\leq j<k\leq n$. Then all the matrices $A,W,X$ are Hermitian and $X$ can be written equivalently as
\begin{align}\label{eqn:X_matrix_form}
X= A\circ\br{z^*z^{*\H} + \sigma W} = A\circ\br{z^*z^{*\H}}+ \sigma A \circ W.
\end{align}
Note that $X$ can be seen as a noisy version of
\begin{align}\label{eqn:EX}
\E X = pz^*z^{*\H}-pI_n
\end{align}
whose leading eigenvector is $z^*/\sqrt{n}$.
This motivates the following spectral method \cite{singer2011angular, gao2021exact, cucuringu2016sync}. Let $u\in\mathbb{C}^n$  be the leading eigenvector of $X$. Then the spectral estimator $\hat z\in\mathbb{C}_1^n$ is defined as
\begin{align}\label{eqn:spectral_estimator_def}
\hat z_j := \begin{cases}
\frac{u_j}{\abs{u_j}},\text{ if }u_j\neq 0,\\
1,\quad\text{ if }u_j=0,
\end{cases}
\end{align}
for each $j\in[n]$,  where each $u_j$ is normalized so that $\hat z_j\in\mathc_1$. The performance of the spectral estimator can be quantified by a normalized squared $\ell_2$ loss
\begin{align}\label{eqn:ell_def}
\ell(\hat z,z^*):= \min_{a\in\mathc_1} \frac{1}{n}\sum_{j=1}^n \abs{\hat z_j - z_j^*a}^2,
\end{align}
where the minimum over $\mathc_1$ is due to the  fact that $z^*_1,\ldots, z^*_n$ are identifiable only up to a phase.

The spectral estimator $\hat z$ is simple and easy to implement. Regarding its theoretical performance, it was suggested in \cite{gao2021exact} that an upper bound $\ell(\hat z,z^*)\leq C(\sigma^2+1)/(np)$ holds with high probability for some constant $C$. However,  the minimax risk of the phase synchronization was established in \cite{gao2021exact} and has the following lower bound:
\begin{align}\label{eqn:simple_minimax}
\inf_{z\in\mathc^n}\sup_{z^*\in \mathc_1^n}\E\ell(z,z^*)\geq \br{1-o(1)}\frac{\sigma^2}{2np}.
\end{align}
To provably achieve the minimax risk, the spectral method is often used as an initialization for some more sophisticated  procedures. For example, it was used to initialize a  generalized power method (GPM) \cite{boumal2016nonconvex, liu2017estimation, perry2018message} in \cite{gao2021exact}.
Nevertheless, numerically the performance of the spectral method is already very good and the improvement from GPM is often marginal. 
This raises the following questions about the performance of the spectral method: Can we derive a sharp upper bound? Does the spectral method already achieve the minimax risk or not?

In this paper, we provide complete answers to these questions. We carry out a sharp $\ell_2$ analysis of the  performance of the spectral estimator $\hat z$ and further show it achieves the minimax risk with the correct constant. Our main result is summarized below in Theorem \ref{thm:asymptotic} in asymptotic form.  Its non-asymptotic version will be given in Theorem \ref{thm:phase_main} that only requires $\frac{np}{\sigma^2}$, $\frac{np}{\log n}$ to be greater than a certain constant. We note that in this paper, $p$ and $\sigma^2$ are not constants but  functions of $n$. This dependence can be more explicitly represented as $p_n$ and $\sigma^2_n$. However, for simplicity of notation and readability, we choose to denote these as $p$ and $\sigma^2$ throughout the paper.

\begin{theorem}\label{thm:asymptotic}
Assume $\frac{np}{\sigma^2}\rightarrow\infty$ and $\frac{np}{\log n}\rightarrow\infty$. There exists some $\delta=o(1)$ such that with high probability,
\begin{align}\label{eqn:asymptotic}
\ell(\hat z,z^*) \leq (1+\delta)\frac{\sigma^2}{2np}.
\end{align}
As a consequence, when $\sigma=0$ (i.e., there is no additive noise), the spectral method recovers $z^*$ exactly (up to a phase) with high probability as long as $\frac{np}{\log n}\rightarrow\infty$.
\end{theorem}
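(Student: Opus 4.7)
The plan is to compare the top eigenvector $u$ of $X$ with a carefully chosen population eigenvector and then extract a sharp first-order $\ell_2$ expansion. The naive choice $z^*/\sqrt{n}$ suggested by (\ref{eqn:EX}) is not actually an eigenvector of the noise-free data matrix $X^* := A\circ(z^*z^{*\H})$, so using it cannot possibly yield exact recovery when $\sigma=0$. Instead I would use the identity $X^* = Z A Z^{\H}$ where $Z := \mathrm{diag}(z^*)$, which shows that $X^*$ is unitarily equivalent to the random adjacency matrix $A$. In particular, if $v$ is the Perron eigenvector of $A$, then $u^* := Zv$ is the leading eigenvector of $X^*$. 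Since $np/\log n \to \infty$ implies connectivity of the Erd\"{o}s--R\'enyi graph with high probability, Perron--Frobenius forces $v_j > 0$ for every $j$, and hence $u^*_j/|u^*_j| = z^*_j$. This already settles the $\sigma = 0$ case of the theorem.

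For $\sigma > 0$, write $X = X^* + E$ with $E := \sigma (A\circ W)$ and decompose $u = \alpha u^* + h$ with $h\perp u^*$. Let $P_\perp := I - u^*u^{*\H}$ and $X^*_\perp := P_\perp X^* P_\perp$. Projecting $Xu = \lambda_1 u$ onto the range of $P_\perp$ and using that $X^*$ preserves this subspace gives the fixed-point identity
\[
h \;=\; G_{\lambda_1}\,P_\perp E\,(\alpha u^* + h), \qquad G_\lambda := (\lambda I - X^*_\perp)^{-1}.
\]
Iterating produces the Neumann series $h = \sum_{k\geq 0}\alpha(G_{\lambda_1} P_\perp E)^k G_{\lambda_1} P_\perp E\,u^*$. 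Standard estimates give $\lambda_1^* = \Theta(np)$, $\opnorm{X^*_\perp} = O(\sqrt{np})$, and $\opnorm{E} = O(\sigma\sqrt{np})$, so $G_{\lambda_1}P_\perp E$ has operator norm $O(\sigma/\sqrt{np}) = o(1)$. Hence $h$ is well-approximated in $\ell_2$ by its first-order term $h^{(1)} := \alpha\,G_{\lambda_1^*}P_\perp E\,u^*$, with relative error $o(1)$.

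To convert the $\ell_2$ expansion into the loss $\ell(\hat z,z^*)$, I would write $u_j = z^*_j(v_j + \tilde h_j)$ with $\tilde h_j := h_j\overline{z^*_j}$. Choosing the optimal global phase $a\in\mathc_1$ and expanding the map $w\mapsto w/|w|$ to first order gives $|\hat z_j - a z^*_j|^2 \approx (\im \tilde h_j)^2/v_j^2$, up to cubic corrections in $\tilde h_j$. So only the \emph{imaginary} part of $\tilde h$ matters to leading order. Since each $W_{jk}\sim\mathcal{CN}(0,1)$ and $|z^*_j z^*_k|=1$, the vector $h^{(1)}$ is circularly symmetric complex Gaussian conditional on $A$, so exactly half of its entrywise variance lies in the imaginary part. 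A direct variance calculation using $\lambda_1^* \approx np$, $v_k^2 \approx 1/n$, and $\sum_k A_{jk}\approx np$ then yields $\frac{1}{n}\sum_j(\im\tilde h^{(1)}_j)^2/v_j^2 = (1+o(1))\,\sigma^2/(2np)$ with high probability, which is precisely the minimax constant.

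The main technical obstacle will be step two: bounding the remainder $h - h^{(1)}$ to a \emph{constant-sharp}, not merely rate-optimal, precision. A Davis--Kahan style bound yields only $\norm{h}^2 = O(\sigma^2/(np))$, which recovers the rate but loses the factor $1/2$. Pushing through the correct constant requires (i) replacing $G_{\lambda_1}$ by $G_{\lambda_1^*}$ with error controlled by a sharp bound on $|\lambda_1 - \lambda_1^*|$; (ii) summing the Neumann series geometrically in $\sigma/\sqrt{np}$ with a uniform high-probability control; and (iii) decoupling the noise $E$ from $u^*$ at the entrywise level, which I would attempt via a leave-one-out construction producing an auxiliary eigenvector $u^{*(j)}$ independent of the $j$-th row of $E$. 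This entrywise decoupling is what enables the clean Gaussian variance calculation above and is where a new perturbation analysis toolkit for the leading eigenvector is most needed.
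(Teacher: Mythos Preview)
Your core strategy matches the paper's: you correctly identify the population eigenvector $u^* = Z v$ (the paper writes it as $z^*\circ\check u$), you recognize that only the imaginary part of the first-order perturbation contributes to the leading constant, and you aim for a variance calculation yielding $\sigma^2/(2np)$. The $\sigma=0$ case via Perron--Frobenius is cleaner than the paper's route, which instead invokes the quantitative $\ell_\infty$ bound of Abbe et al.\ on $\check u$; however, the paper needs that $\ell_\infty$ bound anyway in the $\sigma>0$ analysis (to lower-bound the real part $b_2\check u_j$ appearing in the denominator), so your shortcut does not save work in the end.

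Where your route diverges is the packaging of the first-order term and the decoupling step. The paper does \emph{not} use a resolvent expansion: instead it takes the one-step power-iteration vector $\tilde u := Xu^*/\norm{Xu^*}$ and proves directly, via an algebraic identity for $(\mu_1 - Y)(v-\tilde v)$, that $\inf_{b}\norm{u-\tilde u b}\lesssim(\sigma^2+\sigma)/(np)$. This avoids carrying $G_{\lambda_1^*}$ around and makes the leading term simply $\sigma(A\circ W)u^*/\norm{Xu^*}$, with no resolvent to simplify. For the variance, the paper does \emph{not} use leave-one-out to decouple $E$ from $u^*$: since $W$ is independent of $A$, it just substitutes $u^*\to z^* b_2/\sqrt{n}$ in the noise term, bounds the resulting $\ell_2$ error $\sigma(A\circ W)\delta^*$ by $\sigma\norm{A\circ W}\norm{\delta^*}$ with $\norm{\delta^*}$ controlled by Davis--Kahan, and is left with the clean sum $\sum_j\bigl|\im\sum_{k}A_{jk}W_{jk}\overline{z^*_j}z^*_k\bigr|^2$, whose concentration around $n^2p/2$ is elementary. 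Your leave-one-out proposal would work but is heavier machinery than needed here; the only place an entrywise (leave-one-out--type) argument actually enters is the $\ell_\infty$ control of the Perron vector of $A$, not the decoupling of $E$ from $u^*$.
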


 Theorem \ref{thm:asymptotic} shows that $\hat z$ is not only rate-optimal but also achieves the exact  minimax risk with the correct leading constant in front of the optimal rate. The conditions needed in Theorem \ref{thm:asymptotic} are necessary for consistent estimation of $z^*$ in the phase synchronization problem.
The condition $\frac{np}{\sigma^2}\rightarrow\infty$ is needed so that $z^*$ can be estimated with a vanishing error according to the  minimax lower bound (\ref{eqn:simple_minimax}). The condition $\frac{np}{\log n}\rightarrow\infty$ allows $p$ to decrease as $n$ grows and is close to the $\frac{np}{\log n}>(1+\epsilon)$ condition required for $A$ to be connected.
If $A$ has disjoint subgraphs, it is impossible to estimate $z^*$ with a global phase. These two conditions are also needed in \cite{gao2021exact, gao2022sdp} to establish the optimality of MLE (maximum likelihood estimation), GPM, and SDP (semidefinite programming). 
Under these two conditions, \cite{gao2021exact} used the spectral method as an initialization for the GPM and shows  GPM achieves the minimax risk after $\log(1/\sigma^2)$ iterations. On the contrary, Theorem \ref{thm:asymptotic} shows that the spectral method already achieves the minimax risk. 
This means that the spectral estimator is minimax optimal and achieves the correct leading constant whenever consistent estimation is possible, and in this parameter regime, it is as good as MLE, GPM, and SDP.

There are two key novel components toward establishing Theorem \ref{thm:asymptotic}. The first  is a new idea regarding the choice of the ``population eigenvector'' as $u$ can be viewed as its sample counterpart obtained from data. Due to (\ref{eqn:EX}) and the fact $pz^*z^{*\H}=np (z^*/\sqrt{n})(z^*/\sqrt{n})^\H$ is rank-one with the eigenvector $z^*/\sqrt{n}$, existing literature such as \cite{gao2021exact,gao2021optimal,ling2022near} treated $z^*/\sqrt{n}$ as the population eigenvector and study the perturbation of $u$ with respect to $z^*/\sqrt{n}$. This seems natural but turns out to be unappealing as it fails to explain why the spectral estimator is able to recover all phases exactly when $\sigma=0$, i.e., when there is no additive noise. Instead, we denote $\check u\in\mathr^{n}$ as the leading eigenvector of $A$ and regard $u^*\in\mathc^n$, defined as
\begin{align}\label{eqn:u_star_def}
u^*:= z^* \circ \check u,
\end{align}
i.e., $u^*_j = z^*_j \check u_j$ for each $j\in[n]$, as the population eigenvector.
Note that $u^*$ is random as it depends on the graph $A$. A careful analysis of $u^*$ reveals that it is the leading eigenvector of $A\circ z^*z^{*\H}$ (see Lemma \ref{lem:no_additive_noise}). In addition, Proposition \ref{prop:no_additive_noise} shows that with high probability,  $u^*_j/|u^*_j| = z^*_j$ for each $j\in[n]$, up to a global phase.  Since $u$ equals $u^*$ when $\sigma=0$, it successfully explains the exact recovery of the spectral method in the no-additive-noise case. Another advantage of viewing $u^*$  as the population eigenvector, instead of $z^*/\sqrt{n}$, is that intuitively $u^*$ is closer to $u$ than $z^*/\sqrt{n}$ is. This is because $A\circ z^*z^{*\H}$ is closer to the data matrix $X$ than $pz^*z^{*\H}$ is.

The second key component is a novel perturbation analysis for $u$. Classical matrix perturbation theory such as Davis-Kahan Theorem focuses on analyzing $\inf_{b\in\mathc_1}\|u-u^*b\|$. We go beyond  it and show $u$ can be well-approximated by its first-order approximation $\tilde u$ defined as
\begin{align}\label{eqn:tilde_u_def_intro}
\tilde u := \frac{Xu^*}{\norm{Xu^*}},
\end{align}
 in the sense that the difference between these two vectors (up to a phase) has a small $\ell_2$ norm. This means that when $np\gtrsim \log n$ and $np\gtrsim \sigma^2$, we have 
 \begin{align}\label{eqn:intro_decomposition}
 \inf_{b\in\mathc_1} \norm{u-\tilde ub}\lesssim \frac{\sigma^2 + \sigma}{np},
 \end{align}
 with high probability (see Proposition \ref{prop:eigenvector_perturbation}).
 In fact, our perturbation analysis extends beyond the phase synchronization problem. What we establish is a general perturbation theory that can be applied to  two arbitrary Hermitian matrices (see Lemma \ref{lem:eigenvector_perturbation}), which might be of independent interest.

With the help of these two key components, we then carry out an entrywise analysis for each $\hat z_j =u_j/\abs{u_j}$. 
Note that $u_j$ can be decomposed into $\tilde u_j$ and the difference between $u_j$ and $\tilde u_j$ (up to some global phase). We can decompose the error of $\hat z_j$ into two parts, one is related to the estimation error of  $\tilde u_j/|\tilde u_j|$, and the other is related to the magnitude of the difference between $u_j$ and $\tilde u_j$ (up to some global phase). Summing over all coordinates, 
the first part eventually leads to the minimax risk $(1+o(1)){\sigma^2}/{2np}$ and the second part is essentially negligible due to (\ref{eqn:intro_decomposition}),
which leads to the exact minimax optimality of the spectral estimator.

~\\
\indent\emph{Orthogonal Group Synchronization.} The above analysis for the phase synchronization can be extended to quantify the performance of the spectral method for orthogonal group synchronization,
which is about orthogonal matrices instead of phases. 
Let $d>0$ be an integer. Define
\begin{align}\label{eqn:od_set_def}
\mathcal{O}(d):=\cbr{U\in\mathr^{d\times d}:UU^\T=U^\T U=I_d}
\end{align}
to include all orthogonal matrices in $\mathr^{d\times d}$. Let $Z^*_1,\ldots,Z^*_n\in\mathcal{O}(d)$. Analogous to (\ref{eqn:model}), we consider the problem with additive Gaussian noise and incomplete data \cite{gao2021optimal, ling2022improved, ling2022near}.
For each $1\leq j<k\leq n$,
we observe $\mathx_{jk} :=
{Z_j^*  Z_k^{*\T} + \sigma \mathw_{jk}}\in\mathr^{d\times d}$ when $A_{jk}=1$, where $ \mathw_{jk}$ follows the standard matrix Gaussian distribution.  The goal is to recover $Z_1^*,\ldots,Z_n^*$  from  $\{\mathx_{jk}\}_{1\leq j<k\leq n}$ and $\{A_{j,k}\}_{1\leq j<k\leq n}$. This is known as the  orthogonal group synchronization (or $\mathcal{O}(d)$ synchronization).

The observations $\{\mathx_{jk}\}_{1\leq j<k\leq n}$ can be seen as submatrices of an $nd\times nd$  matrix $\mathx$ with $\mathx_{jj}:=0$ and $\mathx_{kj}:=\mathx_{jk}^\T$ for any $1\leq j<k\leq n$. Then $\mathx$ is symmetric and can be seen as a noisy version of
\begin{align}\label{eqn:intro_EX_od}
\E \mathx = p Z^*Z^{*\T} - pI_{nd}
\end{align}
whose leading eigenspace is $Z^*/\sqrt{n}$,
where $Z^*\in \matho(d)^{n}$ is an $nd\times d$ matrix such that its $j$th submatrix is $Z^*_j$. Similar to the phase synchronization, we have the following spectral method. Let $\lambda_1\geq \ldots\geq \lambda_d$ be the largest $d$ eigenvalues of $\mathx$ and $u_1,\ldots,u_d\in\mathr^{nd}$ be their corresponding eigenvectors. Denote $U:=(u_1,\ldots,u_d)\in\mathr^{nd\times d}$ as the eigenspace that includes the top $d$ eigenvectors of $\mathx$. For each $j\in[n]$, denote $U_j\in\mathr^{d\times d}$ as its $j$th submatrix. Then the spectral estimator  $\hat Z_j\in\matho(d)$ is defined as
\begin{align}\label{eqn:intro_hat_Z_def}
\hat Z_j :=\begin{cases}
\mathp(U_j),\text{ if }\det(U_j)\neq 0,\\
I_d,\quad\;\;\;\text{ if }\det(U_j)=0,
\end{cases}
\end{align}
for each $j\in[n]$.
Here the mapping $\mathp:\mathr^{d\times d}\rightarrow \matho(d)$ is derived from the  polar decomposition and serves as a normalization step for each $U_j$ such that $\hat Z_j\in\matho(d)$. Let $\hat Z\in\matho(d)^n$ be an $nd\times d$ matrix such that $\hat Z_j$  is its $j$th submatrix for each $j\in[n]$.  
Then the performance of $\hat Z$ can be quantified by a loss function $\ell^\text{od}(\hat Z, Z^*)$ that is analogous to (\ref{eqn:ell_def}). The detailed definitions of $\mathp$ and $\ell^\text{od}$  are deferred to Section \ref{sec:od}.

The spectral method $\hat Z$ was used as an initialization in \cite{gao2021optimal} for
a variant of GPM to achieve the exact minimax risk $(1+o(1))\frac{d(d-1)\sigma^2}{2np}$ for $d=O(1)$. To conduct a sharp analysis of its statistical performance, we extend our novel perturbation analysis from analyzing the leading eigenvector to the leading eigenspace. Recall $\check u$ is the leading eigenvector of $A$. Analogous to (\ref{eqn:u_star_def}),
we have a novel choice of the population eigenspace $U^*$, defined as
\begin{align}\label{eqn:U_star_def}
U^*:= Z^* \circ (\check u \otimes \one_d),
\end{align}
and view $U$ as its sample counterpart. This is different from existing literature \cite{gao2021optimal, zhu2021orthogonal} which uses $Z^*/\sqrt{n}$ as the population eigenspace. Our choice of $U^*$  enables the establishment of the exact recovery of the spectral method when there is no additive noise (i.e., $\sigma=0$), as seen Proposition \ref{prop:no_additive_noise_od}, and is closer to $U$ than $Z^*/\sqrt{n}$ is.

 The first-order approximation of $U$ is a matrix determined by $\mathx U^*$ whose explicit expression will be given later in (\ref{eqn:tilde_U_def}).
We then show $U$ can be well-approximated by its first-order approximation, analogous to (\ref{eqn:tilde_u_def_intro}),
 with a remainder term of a small $\ell_2$ norm (see Proposition \ref{prop:eigenspace_perturbation}). This is a consequence of a more general eigenspace perturbation theory (see Lemma \ref{lem:eigenspace_perturbation}) for two arbitrary Hermitian matrices.
Using the first-order approximation, we then carry out an entrywise analysis for $\hat Z$.
 Our main result for the spectral method in the $\matho(d)$ synchronization is summarized below in Theorem \ref{thm:od_asymptotic}. The non-asymptotic version will be given in Theorem \ref{thm:od}.

\begin{theorem}\label{thm:od_asymptotic}
Assume $\frac{np}{\sigma^2}\rightarrow\infty$, $\frac{np}{\log n}\rightarrow\infty$, and $2\leq d=O(1)$. There exists some $\delta=o(1)$ such that with high probability,
\begin{align*}
\ell^\text{od}(\hat Z,Z^*) \leq (1+\delta)\frac{d(d-1)\sigma^2}{2np}.
\end{align*}
As a consequence, when $\sigma=0$ (i.e., there is no additive noise), the spectral method recovers $Z^*$ exactly (up to an orthogonal matrix) with high probability as long as $\frac{np}{\log n}\rightarrow\infty$.
\end{theorem}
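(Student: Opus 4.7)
The plan is to mirror the phase-synchronization argument at the level of the $nd\times d$ eigenspace $U$, taking for granted the three ingredients already flagged in the introduction: the exact-recovery property for $U^*$ when $\sigma=0$ (Proposition \ref{prop:no_additive_noise_od}), the first-order approximation $\tilde U$ of $U$ through $\mathx U^*$ with small $\ell_2$ remainder (Proposition \ref{prop:eigenspace_perturbation}), and the general eigenspace perturbation theorem (Lemma \ref{lem:eigenspace_perturbation}). The noise-free case follows immediately: when $\sigma=0$, $\mathx = A\circ (Z^*Z^{*\T})$, and the $\matho(d)$ analogue of Lemma \ref{lem:no_additive_noise} shows $U^*$ is the top-$d$ eigenspace of $\mathx$, so $U = U^* O$ for some $O\in\matho(d)$; Proposition \ref{prop:no_additive_noise_od} then gives $\mathp(U^*_j)=Z^*_j \tilde O$ for a common $\tilde O\in\matho(d)$ on the high-probability event that $A$ is connected, which holds under $np/\log n\to\infty$.

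For the noisy case, I would first apply Proposition \ref{prop:eigenspace_perturbation} to write $U = \tilde U O + R$ with $\fnorm{R}\lesssim (\sigma^2+\sigma)/(np)$ for some $O\in\matho(d)$. Local Lipschitz continuity of the polar map $\mathp$ around a well-conditioned matrix yields, for any small $\epsilon>0$,
\begin{align*}
\fnorm{\mathp(U_j)-Z_j^* O^*}^2 \leq (1+\epsilon)\,\fnorm{\mathp(\tilde U_j O)-Z_j^* O^*}^2 + C_\epsilon \fnorm{R_j}^2,
\end{align*}
where $O^*\in\matho(d)$ is a single global alignment (the minimizer in $\ellod$). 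Summing the remainder term gives $C_\epsilon \fnorm{R}^2/n \lesssim (\sigma^2+\sigma)^2/(n^2p^2)$, which is $o(\sigma^2/(np))$ under the stated scaling.

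For the leading term, I would expand $\mathx U^* = \lambda U^* + \sigma (A\circ\mathw)U^*$, where $\lambda$ is the top eigenvalue of $A\circ Z^*Z^{*\T}$ (of order $np$), so that $\tilde U_j$ is proportional to $\lambda U^*_j + \sigma N_j$ with $N_j := \sum_{k\neq j} A_{jk}\mathw_{jk} Z^*_k \check u_k$. Using $\check u_j \approx 1/\sqrt n$, the block $Z^{*\T}_j \tilde U_j$ is a scalar multiple of the identity plus a Gaussian perturbation $E$ of typical size $\sigma/\sqrt{np}$. The polar factor of $cI_d+E$ expands as $I_d + (E-E^\T)/(2c) + O(\fnorm{E}^2)$, so the symmetric part of $E$ is annihilated at first order and only the skew-symmetric component survives. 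Its $d(d-1)/2$ independent Gaussian entries each have variance of order $\sigma^2/(np)$ conditional on $A$, so $\chi^2$ concentration together with a union bound over $j\in[n]$ makes the block average of $\fnorm{(E-E^\T)/(2c)}^2$ concentrate at $(1+o(1))\,d(d-1)\sigma^2/(2np)$.

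The main obstacle is the first-order expansion of the polar decomposition: one must verify that the symmetric part of $E$ contributes only at second order, so that the leading constant becomes $d(d-1)/2$ rather than $d^2/2$, and one must handle the noncommutative global alignment $O^*\in\matho(d)$ consistently across blocks, which has no analogue in the scalar phase case. The assumption $d=O(1)$ is used both to close the chain of $O(\fnorm{E}^2)$ remainders uniformly in $j$ and to make the blockwise concentration estimates compatible with a union bound over $j\in[n]$.
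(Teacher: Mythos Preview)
Your proposal is correct and follows essentially the same route as the paper, including the key observation that only the skew-symmetric part of the block noise survives in the polar factor at first order, which is what produces the $d(d-1)/2$ constant. The paper's execution differs only in that, instead of Taylor-expanding $\mathp(cI_d+E)$, it compares $\mathp(Q\tilde Q^\T+T_j)$ with $\mathp\bigl(Q\tilde Q^\T+(T_j+T_j^\T)/2\bigr)=I_d$ and applies the Lipschitz bound for $\mathp$ (Lemma \ref{lem:od_normalization}) to isolate $\fnorm{T_j-T_j^\T}/2$ directly; it also handles the ill-conditioned blocks via an explicit indicator-function case split rather than leaving that event implicit in the phrase ``around a well-conditioned matrix.''
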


Theorem \ref{thm:od_asymptotic} shows that the spectral method $\hat Z$ achieves exact minimax optimality as it matches the minimax lower bound $(1+o(1))\frac{d(d-1)\sigma^2}{2np}$ established in \cite{gao2021optimal}. Similar to the phase synchronization, the two conditions needed in Theorem \ref{thm:od_asymptotic} so that consistent estimation of $Z^*$ is possible. They are also needed in  \cite{gao2021optimal}  to achieve the minimax risk by a variant of GPM initialized by the spectral method. On the contrary, Theorem \ref{thm:od_asymptotic} shows that in this parameter regime, the spectral method is already minimax optimal with the correct leading constant.

~\\
\indent\emph{Related Literature.}
Synchronization is a fundamental problem in applied math and statistics.
Various methods have been studied for both phase synchronization and $\matho(d)$ synchronization, including the maximum likelihood estimation (MLE) \cite{gao2021exact, zhong2018near}, GPM \cite{zhong2018near,liu2017estimation, gao2021exact, gao2021optimal, ling2022improved, shen2016normalized}, SDP \cite{arie2012global,singer2011three, ling2020solving, fan2021joint, wang2013exact, gao2022sdp, javanmard2016phase},  spectral methods \cite{arie2012global,singer2011three, romanov2020noise,boumal2013robust, perry2016optimality, filbir2020recovery}, and message passing \cite{perry2018message, lerman2022robust, shi2020message}. The theoretical performance of spectral methods was investigated in \cite{gao2021exact, gao2021optimal, ling2022near}  and crude error bounds under $\ell_2$ or Frobenius norm were obtained. An $\ell_\infty$-type error bound for  spectral methods was also given in \cite{ling2022near}.

Fine-grained perturbation analysis of eigenvectors has gained increasing attention in recent years for various low-rank matrix problems in machine learning and statistics. Existing literature has mostly focused on establishing $\ell_\infty$ bounds for eigenvectors \cite{abbe2020entrywise, chen2021spectral, fan2018eigenvector} or $\ell_{2,\infty}$ bounds on eigenspaces \cite{lei2019unified, cape2019two, cai2021subspace, agterberg2021entrywise}. For instance, \cite{abbe2020entrywise} developed $\ell_\infty$-type bounds for the difference between eigenvectors (or eigenspaces) and their first-order approximations. In this paper, we focus on developing sharp $\ell_2$-type perturbation bounds, where direct applications of existing $\ell_\infty$-type results will result in extra logarithm factors.

For the phase synchronization problem, \cite{filbir2020recovery, iwen2020phase, preskitt2018phase} investigated variants of spectral methods based on Laplacian matrices. Instead of using the leading eigenvector of $X$ as in this paper, they utilize  the eigenvector corresponding to the smallest eigenvalue of  $D-X$ or $I_n - D^{-\frac{1}{2}} X D^{-\frac{1}{2}}$, where $D\in\mathr^{n\times n}$ is the degree matrix of $A$ with diagonal entries $D_{jj}:	=\sum_{k\neq j} A_{jk}$ and off-diagonal entries set to zero. These studies have established upper bounds for the performance of their spectral methods applicable to general graphs $A$ and additive noise $W$. Our focus, however, is on Erd\"{o}s-R\'enyi random graphs with Gaussian noise. Under our setting, their results imply an upper bound of $\frac{C\sigma^2}{np}$, where $C$ is a constant significantly greater than 1. In contrast, our work establishes a sharp upper bound with the correct leading constant $1/2$. Our analytical approach could potentially be extended to their methods to  achieve the correct constant  $1/2$.

The existing literature \cite{gao2021exact, gao2022sdp, gao2021optimal} explored the exact minimax risk in synchronization problems, focusing primarily on minimax lower bounds and analyzing MLE, GPM, and SDP. While our study shares a thematic resemblance with these prior efforts, it fundamentally diverges in both analysis and proof techniques.  Previous studies hinge on contraction properties of the generalized power iteration (GPI), demonstrating the iterative reduction in GPM error until an optimal error is achieved. This approach further interprets MLE as a GPI fixed point and SDP as an extension of GPI in a higher-dimensional space, thereby establishing their optimality.
In contrast, this paper employs a novel strategy specifically tailored for the spectral method. Instead of relying on the GPI framework, which proves inadequate for spectral analysis, we introduce a new perturbation toolkit designed for eigenvector analysis. This toolkit provides precise characterization of eigenvector perturbation and leads to the optimality of the spectral method. It opens new avenues for research and application beyond synchronization problems.

~\\
\indent\emph{Organization.}
We study the phase synchronization in Section \ref{sec:phase}. We first establish the exact recovery of the spectral method in the no-additive-noise case in Section \ref{sec:no_additive}. Then in Section \ref{sec:phase_approximation}, we present our main technical tool for quantifying the distance between the leading eigenvector and its first-order approximation. We then carry out an entry-wise analysis of the spectral method and obtain non-asymptotic sharp upper bounds in Section \ref{sec:sharp_constant}. Finally, we consider the extension to  the orthogonal group synchronization in Section \ref{sec:od}. Proofs of results for the phase synchronization are given in Section \ref{sec:proof_phase}. Due to the page limit, we prove Lemma \ref{lem:eigenspace_perturbation} in Section \ref{sec:proof_od} and include the proofs of other results for the orthogonal group synchronization in the Appendix. 

~\\
\indent\emph{Notation.} For any positive integer $n$, we write $[n]:=\{1,2,\ldots, n\}$  and $\one_n := (1,1,\ldots, 1)\in\mathr^n$. Denote $I_n$ as the $n\times n$ identity matrix  and $J_n:=\one_n\one_n^\T\in\mathr^{n\times n}$ as the $n\times n$ matrix with all entries being one.
 Given $a,b\in\mathr$, we denote $a\wedge b:=\min\{a,b\}$ and $a\vee b:=\max\{a,b\}$. For a complex number $x\in\mathc$, we use $\bar x$ for its complex conjugate, $\re(x)$ for its real part, $\im(x)$ for its imaginary part,  and $|x|$ for its modulus.  
Denote $\maths_n:=\cbr{x\in\mathc^n:\norm{x}=1}$ as including all unit vectors in $\mathc^n$. For a complex vector $x=(x_j)\in\mathbb{C}^d$, we denote $\|x\|=({\sum_{j=1}^d |x_j|^2})^{1/2}$ as its Euclidean norm. 
 For a complex matrix $B =(B_{jk})\in\mathbb{C}^{d_1\times d_2}$, we  use $B^\H \in\mathbb{C}^{d_2\times d_1}$ for its conjugate transpose such that $B^{\H}_{jk}= \overline{B_{kj}}$. 
The Frobenius norm and the operator norm of $B$ are defined by $\fnorm{B}:=({\sum_{j=1}^{d_1}\sum_{k=1}^{d_2}|B_{jk}|^2})^{1/2}$ and $\norm{B} := \sup_{u\in\mathbb{C}^{d_1},v\in\mathbb{C}^{d_2}:\norm{u}=\norm{v}=1} u^\H Bv$. We use $\text{Tr}(B)$ for the trace of a squared matrix $B$. We denote $B_{j\cdot}$ as its $j$th row and define $\|B\|_{2\rightarrow\infty} := \max_{j\in[d_1]} \norm{B_{j\cdot}}$.
The notation $\det(\cdot)$ and $\otimes$ are used for determinant and Kronecker product.  For $U,V\in\mathbb{C}^{d_1\times d_2}$, $U\circ V\in\mathbb{R}^{d_1\times d_2}$ is the Hadamard product $U\circ V:=(U_{jk}V_{jk})$. For any $B\in\mathr^{d_1\times d_2}$, we denote $s_{\min}(B)$ as its smallest singular value.
For two positive sequences $\{a_n\}$ and $\{b_n\}$, $a_n\lesssim b_n$ and $a_n=O(b_n)$ both mean $a_n\leq Cb_n$ for some constant $C>0$ independent of $n$. We also write $a_n=o(b_n)$ or $\frac{b_n}{a_n}\rightarrow\infty$ when $\limsup_n\frac{a_n}{b_n}=0$.  
We use $\indic{\cdot}$ as the indicator function. Define $\matho(d_1,d_2):=\{V\in\mathr^{d_1\times d_2} : V^\T V=I_{d_2}\}$ to  include all $d_1\times d_2$ matrices  that have orthonormal columns.

\section{Phase Synchronization}\label{sec:phase}

\subsection{No-additive-noise Case}\label{sec:no_additive}
We first study a special case where there is  no additive noise (i.e., $\sigma=0$). In this setting, the data matrix $X = A\circ z^*z^{*\H}$.
Despite the data still being missing at random, we are going to show the spectral method is able to recover $z^*$ exactly, up to a phase.

Recall that  $\check u$ is the leading eigenvector of $A$ and $u^*$ is defined in (\ref{eqn:u_star_def}).  The following lemma points out the connection between $u^*$  and $A\circ z^*z^{*\H}$ as well as the connection between eigenvalues of $A$ and those of $A\circ z^*z^{*\H}$.
\begin{lemma}\label{lem:no_additive_noise}
The unit vector $u^*$ is the leading eigenvector of $A\circ z^*z^{*\H}$. 
That is, with $\lambda^*$ denoting as the largest eigenvalue of $A\circ z^*z^{*\H}$, we have
\begin{align}\label{eqn:u_star}
\br{A\circ z^*z^{*\H}} u^* = \lambda^* u^*,
\end{align}
In addition, all the eigenvalues of $A$ are also eigenvalues of $A\circ z^*z^{*\H}$, and vice versa.
\end{lemma}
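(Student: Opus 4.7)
The plan is to reduce everything to the single observation that $A \circ z^*z^{*\H}$ is a diagonal unitary conjugation of $A$. Specifically, let $D := \mathrm{diag}(z^*_1,\ldots,z^*_n)\in\mathc^{n\times n}$. Since $|z^*_j|=1$ for every $j$, the matrix $D$ is unitary with $D^\H = \mathrm{diag}(\overline{z^*_1},\ldots,\overline{z^*_n})$. A direct entrywise computation gives
\[
(D A D^\H)_{jk} = z^*_j A_{jk} \overline{z^*_k} = A_{jk}(z^*z^{*\H})_{jk} = (A\circ z^*z^{*\H})_{jk},
\]
so $A\circ z^*z^{*\H} = DAD^\H$.

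From this identity, the ``eigenvalue equivalence'' part of the lemma is immediate: unitarily similar matrices share spectra (including multiplicities), so the eigenvalues of $A$ and of $A\circ z^*z^{*\H}$ coincide. Moreover, the correspondence between eigenvectors is explicit: if $Av=\mu v$, then $(A\circ z^*z^{*\H})(Dv) = DAD^\H Dv = DAv = \mu Dv$, i.e., $Dv = z^*\circ v$ is an eigenvector of $A\circ z^*z^{*\H}$ with the same eigenvalue $\mu$.

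Applying this correspondence to the leading eigenpair $(\lambda^*,\check u)$ of $A$ yields $(A\circ z^*z^{*\H})(z^*\circ \check u) = \lambda^*(z^*\circ \check u)$, which is exactly (\ref{eqn:u_star}) with $u^* = z^*\circ\check u$. To conclude that $u^*$ is the \emph{leading} eigenvector of $A\circ z^*z^{*\H}$, it suffices to note that both matrices are Hermitian (hence have real spectra) with identical eigenvalues, so $\lambda^*$ is simultaneously the largest eigenvalue of $A$ and of $A\circ z^*z^{*\H}$. Finally, $u^*$ has unit norm because $|u^*_j|=|z^*_j||\check u_j|=|\check u_j|$ and $\|\check u\|=1$.

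Honestly, there is no real obstacle here: once one writes $A\circ z^*z^{*\H}=DAD^\H$, everything reduces to the elementary fact about unitary similarity. The only subtle point worth flagging is that the lemma treats $\lambda^*$ as the ``largest'' eigenvalue, which requires knowing the spectra are real; this is automatic from Hermiticity but should be stated explicitly. The proof thus consists of (i) verifying the conjugation identity, (ii) deducing spectral equality from unitary similarity, (iii) transporting the leading eigenvector of $A$ by $D$, and (iv) checking normalization.
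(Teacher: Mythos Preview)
Your proof is correct and takes essentially the same approach as the paper: the paper carries out the componentwise calculation that amounts to verifying $A\circ z^*z^{*\H} = DAD^\H$ entry by entry, whereas you state this matrix identity directly and invoke unitary similarity. The content is identical; your packaging is just more concise.
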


Since $X=A\circ z^*z^{*\H}$ in the no-additive-noise case, we have $u=u^*$.
Note that $\hat z_j = u_j/|u_j|=u^*_j/|u^*_j|=z^*_j \check u_j  /|z^*_j \check u_j |$ for each $j\in[n]$. If $\check u_j>0$,  we have $\hat z_j = z_j^* $. If $\hat u_j<0$ instead, then $\hat z_j = -z_j^* $. If all the coordinates of $\check u$ are positive (or negative), we have $\hat z$ being equal to $z^*$ (or $-z^*$) exactly. The following proposition provides an $\ell_\infty$ control for the difference between $\check u$ and $\one_n/\sqrt{n}$, which are eigenvectors of $A$ and $\E A$, respectively. The proof of (\ref{eqn:no_additive_noise}) follows proofs of results in \cite{abbe2020entrywise}. When the right-hand side of (\ref{eqn:no_additive_noise}) is smaller than $1/\sqrt{n}$, it immediately establishes the exact recovery of $\hat z$.

\begin{proposition}\label{prop:no_additive_noise}
There exist some constants $C_1,C_2>0$ such that if $\frac{np}{\log n}>C_1$, we have
\begin{align}\label{eqn:no_additive_noise}
\min_{b\in\{1,-1\}}\max_{j\in[n]}\abs{\check u_j - \frac{1}{\sqrt{n}}b}\leq C_2\br{\sqrt{\frac{\log n}{np}} + \frac{1}{\log (np)}}\frac{1}{\sqrt{n}},
\end{align}
with probability at least $1-8n^{-10}$.
As a result, if $\frac{np}{\log n}> \max\cbr{C_1, 2C_2^2}$, we have $\ell(\hat z,z^*)=0$ with probability at least $1-8n^{-10}$.
\end{proposition}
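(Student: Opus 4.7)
The content of the proposition is really the entrywise bound~(\ref{eqn:no_additive_noise}) on $\check u$; once it is in hand, the exact-recovery claim is immediate. Indeed, since $\sigma=0$ gives $u=u^*=z^*\circ\check u$, we have $\hat z_j = u^*_j/|u^*_j| = z^*_j\,\check u_j/|\check u_j|$, so as soon as the right-hand side of~(\ref{eqn:no_additive_noise}) is strictly smaller than $1/\sqrt{n}$ (which holds for $np/\log n$ and $n$ large enough), every coordinate of $\check u$ shares the sign of $b$, forcing $\hat z_j = b z^*_j$ for all $j$, hence $\ell(\hat z, z^*)=0$.

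For~(\ref{eqn:no_additive_noise}), I would follow the leave-one-out framework of~\cite{abbe2020entrywise}. The population matrix $\E A = p(J_n - I_n)$ has leading eigenvector $\one_n/\sqrt{n}$ with eigenvalue $(n-1)p$ and spectral gap of order $np$. Combined with the standard bound $\|A - \E A\|_{\rm op} \lesssim \sqrt{np}$ (which holds w.h.p.\ under $np\gtrsim \log n$), Davis--Kahan yields the a priori $\ell_2$ estimate $\min_{b\in\{\pm 1\}}\|\check u - b\one_n/\sqrt{n}\|_2 \lesssim 1/\sqrt{np}$; fix the sign so that $b=1$. For each $m\in[n]$ introduce the leave-one-out matrix $A^{(m)}$ obtained by overwriting the $m$-th row and column of $A$ by their means, and let $\check u^{(m)}$ be its leading unit eigenvector, with $\iprod{\check u^{(m)}}{\check u}\geq 0$. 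Two properties drive the argument: $\check u^{(m)}$ is independent of $\{A_{mk}\}_{k}$; and $\|A - A^{(m)}\|_{\rm op} \lesssim \sqrt{np}$ (the perturbation lives on one row and column), so a second Davis--Kahan gives $\|\check u - \check u^{(m)}\|_2 \lesssim 1/\sqrt{np}$.

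Using the eigenvalue identity $\lambda_1 \check u_m = \sum_k A_{mk}\check u_k$, I would decompose
\[
\lambda_1 \check u_m \;=\; \sum_{k\neq m} A_{mk}\check u^{(m)}_k \;+\; \sum_{k\neq m} A_{mk}\bigl(\check u_k - \check u^{(m)}_k\bigr).
\]
Conditional on $\check u^{(m)}$, the first sum is a linear combination of independent Bernoullis with mean $p\sum_{k\neq m}\check u^{(m)}_k$ and conditional variance at most $p$; Bernstein controls its fluctuation at rate $\sqrt{p\log n}$ (once an $\ell_\infty$ envelope for $\check u^{(m)}$ is in place). Meanwhile $|\one_n^\T \check u^{(m)} - \sqrt{n}| \leq \sqrt{n}\,\|\check u^{(m)} - \one_n/\sqrt{n}\|_2 \lesssim 1/\sqrt{p}$, so the conditional mean equals $p\sqrt{n}(1+o(1))$. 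Dividing by $\lambda_1 = (1+o(1))np$ gives $\check u_m \approx 1/\sqrt{n}$ with an error of order $\sqrt{\log n/(np)}/\sqrt{n}$ from this sum, which matches the first term on the right-hand side of~(\ref{eqn:no_additive_noise}).

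The main obstacle is the remainder $\sum_{k\neq m} A_{mk}(\check u_k - \check u^{(m)}_k)$: a naive bound $\|A_{m\cdot}\|_2\,\|\check u-\check u^{(m)}\|_2 \lesssim \sqrt{np}\cdot 1/\sqrt{np} = O(1)$ is too crude by a factor of $n$ after dividing by $\lambda_1\sim np$. Following the bootstrap device of~\cite{abbe2020entrywise}, one starts from the crude $\ell_\infty$ input $\|\check u-\one_n/\sqrt{n}\|_\infty \leq \|\check u-\one_n/\sqrt{n}\|_2 \lesssim 1/\sqrt{np}$, plugs it back into the decomposition above to refine the control on both the remainder term and the Bernstein envelope, and iterates; the additive $1/\log\log n$ factor in~(\ref{eqn:no_additive_noise}) is exactly the slack left after $O(\log n/\log\log n)$ iterations. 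A final union bound over $m\in[n]$, comfortably absorbed into the $n^{-10}$ probability, closes the argument.
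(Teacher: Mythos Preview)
Your proposal is correct and follows essentially the same route as the paper. The paper invokes Theorem~2.2 of \cite{abbe2020entrywise} as a black box to obtain $\min_{s}\|\check u - s\,Au'/((n-1)p)\|_\infty \lesssim 1/(\sqrt{n}\log\log n)$ and then adds a separate Bernstein step for $\|(A-\E A)u'\|_\infty$, whereas you sketch the leave-one-out machinery that underlies that theorem directly; the two error contributions you identify (the Bernstein fluctuation $\sqrt{\log n/(np)}/\sqrt{n}$ and the bootstrap residual $1/(\sqrt{n}\log\log n)$) are precisely the two terms the paper assembles, and your treatment of the exact-recovery consequence matches the paper's.
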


Lemma \ref{lem:no_additive_noise} and Proposition \ref{prop:no_additive_noise} together establish the exact recovery of $\hat z$ for the special case where $\sigma=0$, through studying $u^*$. This provides a starting point for our analysis of the general case where $\sigma\neq 0$. From (\ref{eqn:X_matrix_form}), the data matrix $X$ is a noisy version of $A\circ z^*z^{*\H}$ with additive noise $\sigma A\circ W$ that scales with $\sigma$. As a result, in the following sections, we view $u^*$ as the population eigenvector and $u$ as its sample counterpart, studying the performance of the spectral method.

\subsection{First-order Approximation of The Leading Eigenvector}\label{sec:phase_approximation}

In this section, we provide a fine-grained perturbation analysis for the eigenvector $u$. Classical matrix perturbation theory, such as Davis-Kahan Theorem, can only give a crude upper bound for $\inf_{b\in\mathc_1}\norm{u-u^*b}$, which turns out to be  insufficient to derive a sharp bound for $\ell(\hat z,z^*)$. Instead, we develop a more powerful tool for perturbation analysis of $u$ using its first-order approximation $\tilde u$ defined in (\ref{eqn:tilde_u_def_intro}).
In fact, our tool goes beyond the phase synchronization problem and can be applied to arbitrary Hermitian matrices.

\begin{lemma}\label{lem:eigenvector_perturbation}
Consider  two Hermitian matrices $Y,Y^*\in\mathc^{n\times n}$. Let $\mu^*_1\geq\mu^*_2\geq \ldots \geq \mu^*_n$ be the eigenvalues of  $Y^*$. Let $v^*$ (resp. $v$) be the eigenvector of $Y^*$ (resp. $Y$) corresponding to its largest eigenvalue.
If $\norm{Y -Y^*}\leq \min\{\mu_1^*-\mu_2^*,\mu_1^*\}/4$, we have
\begin{align*}
\inf_{b\in\mathc_1}\norm{v-\frac{Yv^*}{\norm{Yv^*}}b} &\leq \frac{40\sqrt{2}}{9(\mu_1^*-\mu_2^*)}\Bigg(\br{\frac{4}{\mu_1^{*}-\mu_2^*} + \frac{2}{\mu_1^{*}} } \norm{Y-Y^*}^2 \\
&\quad\quad\quad\quad\quad\quad\quad  +\frac{\max\{|\mu_2^*|,|\mu_n^*|\}}{\mu_1^*}\norm{Y-Y^*}\Bigg).
\end{align*}
\end{lemma}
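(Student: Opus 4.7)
The plan is to reduce the phase-invariant distance to a single scalar, compute that scalar via the eigenvalue equation for $v$, and then control the resulting ratio by combining an orthogonal decomposition with the Davis--Kahan theorem.

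First, since both $v$ and $\tilde v := Yv^*/\norm{Yv^*}$ are unit vectors, the infimum over phases admits the closed form
\[
\inf_{b\in\mathc_1}\norm{v-\tilde v\, b}^2 = 2-2\abs{v^\H \tilde v},
\]
attained at $b = \overline{v^\H\tilde v}/\abs{v^\H\tilde v}$. Since $Yv=\mu_1 v$ with $\mu_1$ denoting the leading eigenvalue of $Y$, this inner product simplifies to $v^\H\tilde v = \mu_1 \bar\alpha/\norm{Yv^*}$, where $\alpha := v^{*\H} v$.

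Second, I would write $v^* = \bar\alpha v + w'$ with $w'\perp v$ and $\norm{w'}^2 = 1-\abs{\alpha}^2$. Since $Yv=\mu_1 v$ forces $Yw'\perp v$, one has the Pythagorean identity $\norm{Yv^*}^2 = \mu_1^2\abs{\alpha}^2 + \norm{Yw'}^2$. Applying the elementary inequality $1-t\le 1-t^2$ for $t\in[0,1]$ to $t = \mu_1\abs{\alpha}/\norm{Yv^*}$ then yields
\[
\inf_{b\in\mathc_1}\norm{v-\tilde v\, b} \le \sqrt{2}\,\frac{\norm{Yw'}}{\norm{Yv^*}},
\]
so the whole lemma reduces to a lower bound on $\norm{Yv^*}$ and an upper bound on $\norm{Yw'}$.

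Third, the denominator is easy: $\norm{Yv^*}\ge \mu_1^* - \norm{Y-Y^*}\ge 3\mu_1^*/4$ under the standing hypothesis. For the numerator, I would split $Yw' = Y^*w' + (Y-Y^*)w'$, bound the second piece by $\norm{Y-Y^*}\norm{w'}$, and further decompose $w' = (1-\abs{\alpha}^2)v^* + w''$ with $w''\perp v^*$ and $\norm{w''}\le\norm{w'}$. Using $Y^*v^*=\mu_1^* v^*$ together with the fact that $Y^*$ restricted to the orthogonal complement of $v^*$ has spectral radius at most $\mu_2^*$ (in the regime where the bound is substantive), this produces $\norm{Y^*w'}\le \mu_1^*\norm{w'}^2 + \mu_2^*\norm{w'}$. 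Finally, the Davis--Kahan $\sin\Theta$ theorem together with $\norm{Y-Y^*}\le (\mu_1^*-\mu_2^*)/4$ produces $\norm{w'}\lesssim \norm{Y-Y^*}/(\mu_1^*-\mu_2^*)$.

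I expect the main obstacle to be the bookkeeping of constants. Substituting the Davis--Kahan bound back into $\norm{Yw'}/\norm{Yv^*}$ produces three distinct scalings: a leading term $\mu_2^*\norm{Y-Y^*}/(\mu_1^*(\mu_1^*-\mu_2^*))$ from $\mu_2^*\norm{w'}$, a mixed term $\norm{Y-Y^*}^2/(\mu_1^*(\mu_1^*-\mu_2^*))$ from $\norm{Y-Y^*}\norm{w'}$, and a genuine second-order piece $\norm{Y-Y^*}^2/(\mu_1^*-\mu_2^*)^2$ from $\mu_1^*\norm{w'}^2$. Keeping these three contributions separate and using a sharp form of Davis--Kahan should recover the exact numerical constants (such as $80/9$) stated in the lemma.
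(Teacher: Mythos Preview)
Your approach is correct but takes a genuinely different route from the paper. Both arguments reduce the phase-free distance to controlling $\norm{(I-vv^\H)\tilde v}$ (which coincides with your $\norm{Yw'}/\norm{Yv^*}$, since $Yw'\perp v$), but they diverge in how that quantity is bounded. The paper introduces $\check v := Yv^*/\mu_1$ and derives the operator identity $\norm{\check v}\,(\mu_1 - Y)(I-vv^\H)\tilde v = (I-vv^\H)Y(\check v - v^*)$; inverting $(\mu_1 - Y)$ on $v^\perp$ costs a factor $1/(\mu_1-\mu_2)$, after which the right-hand side is expanded via $Y = \mu_1^* v^* v^{*\H} + (Y^* - \mu_1^* v^* v^{*\H}) + (Y-Y^*)$ and bounded term by term, with Davis--Kahan invoked only for $\norm{(I-vv^\H)v^*}$. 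You instead bound $\norm{Yw'}$ directly by splitting $Yw' = Y^*w' + (Y-Y^*)w'$ and decomposing $w'$ orthogonally against $v^*$. Your route is more elementary and, if you track the arithmetic, actually produces constants strictly smaller than the stated $80/9$, so the lemma follows a fortiori; do not expect to land exactly on $80/9$. The paper's identity-based argument, on the other hand, is precisely the structure that is recycled for the eigenspace version (Lemma~\ref{lem:eigenspace_perturbation}), where your scalar step $1-t\le 1-t^2$ has no immediate matrix analogue.
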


In Lemma \ref{lem:eigenvector_perturbation}, there are two matrices $Y,Y^*$ whose leading eigenvectors are $v,v^*$ respectively. It studies the $\ell_2$ difference between $v$ and $Yv^*/{\norm{Yv^*}}$ up to a phase. 
Let $\mu_1$ be the largest eigenvalue of $Y$.
The unit vector $Yv^*/{\norm{Yv^*}}$ is interpreted as the  first-order approximation of $v$, as $v$ can be decomposed into $v= Yv /\mu_1=  Yv^*/\mu_1+  Y(v-v^*)/\mu_1$ where the first term $ Yv^*/\mu_1$ is proportional to $Yv^*/{\norm{Yv^*}}$. If $Y^*$ is rank-one, meaning $\mu_2^*=\mu^*_n=0$, the upper bound in Lemma \ref{lem:eigenvector_perturbation} becomes ${80\sqrt{2} \norm{Y-Y^*}^2}/{(3\mu_1^{*2})}$. 
 Lemma \ref{lem:eigenvector_perturbation} itself might be of independent interest and be useful in other low-rank matrix problems.

The key to Lemma \ref{lem:eigenvector_perturbation}  is the following equation. Since $\mu_1 v =Yv$ and $\norm{Yv^*} \frac{Yv^*}{\norm{Yv^*} } =Yv^*$, we can derive (see (\ref{eqn:proof_1}) in the proof of Lemma \ref{lem:eigenvector_perturbation}):
\begin{align*}
\mu_1^{-1}\norm{Yv^*} (\mu_1 I_n-Y)\br{v- \frac{Yv^*}{\norm{Yv^*}}}= Y(\mu_1^{-1}Yv^* - v^*).
\end{align*}
Its left-hand side can be shown to be related to $\inf_{b\in\mathc_1}\norm{v -  {Yv^*b}/{\norm{Yv^*}}}$. By carefully studying and upper bounding its right-hand side, which does not involve $v$, we derive Lemma \ref{lem:eigenvector_perturbation}. 

Lemma \ref{lem:eigenvector_perturbation} requires the perturbation between $Y$ and $Y^*$ is not only small compared to the eigengap $\mu^*_1-\mu^*_2$, but also small compared to the leading eigenvalue $\mu^*_1$. A similar requirement is also needed in \cite{abbe2020entrywise} to establish $\ell_\infty$ bounds for the difference between the eigenvector and its first-order approximation. In contrast, classical theory such as Davis-Kahan theorem (see Lemma \ref{lem:davis}) only needs the perturbation to be small compared to the eigengap to bound $\inf_{b\in\mathc_1}\norm{v-v^*b}$. A natural question is whether the bound in Lemma \ref{lem:eigenvector_perturbation} can be modified to depend on eigenvalues only through the eigengaps.
It turns out this is not feasible, as it deals with the distance between $v$ and its first-order approximation $Yv^*/{\norm{Yv^*}}$, not the distance between $v$ and $v^*$ as in Davis-Kahan theorem. To illustrate it, consider the following counterexample. Let $e_1,\ldots, e_n$ be the canonical basis of $\mathr^n$. Let $\delta>0$. Define
\begin{align}\label{eqn:rev_10}
Y^*:= \text{diag}(0,-1,-1,\ldots, -1)\in\mathr^{n\times n},\text{ and }Y:= Y^* + \delta (e_1+ e_2)(e_1+e_2)^\T/2.
\end{align}
Then $\mu_1^*=0,\mu_2^*=-1, \mu^*_1 - \mu^*_2 =1, v^*=e_1, \norm{Y-Y^*} = \delta$, and $Yv^*/{\norm{Yv^*}}=(e_1+e_2)/\sqrt{2}$.  We can show $v$ has the following explicit expression (see Appendix \ref{sec:v_calculation}  for detailed calculation):
\begin{align}\label{eqn:rev_11}
v = \sqrt{\frac{1}{2}\br{1+ \frac{1}{\sqrt{1+\delta^2}}}} e_1+ \sqrt{\frac{1}{2}\br{1- \frac{1}{\sqrt{1+\delta^2}}}}e_2.
\end{align}
When $\delta$ is sufficiently close to 0, we have $v\approx v^*$. This is not surprising as it is consistent with the bound from Davis-Kahan theorem as the ratio between the perturbation and eigengap is $\norm{Y-Y^*}/(\mu^*_1 - \mu^*_2) = \delta\approx 0$. On the other hand, $\norm{v - Yv^*/{\norm{Yv^*}}}\approx \norm{e_1 - (e_1+e_2)/\sqrt{2}}=2-\sqrt{2}>0$ no matter how small $\delta$ may be. As a result, in this counterexample, $Yv^*/{\norm{Yv^*}}$ is not a good approximation of $v$ despite the sufficiently small perturbation.

Applying Lemma \ref{lem:eigenvector_perturbation} to the phase synchronization, we have the following result.
\begin{proposition}\label{prop:eigenvector_perturbation}
There exist constants $C_1,C_2,C_3>0$ such that if $\frac{np}{\log n}>C_1$ and $\frac{np}{\sigma^2}>C_2$, we have
\begin{align*}
\inf_{b\in\mathc_1} \norm{u-\tilde ub}\leq C_3\frac{\sigma^2+\sigma}{np},
\end{align*}
with probability at least $1-3n^{-10}$.
\end{proposition}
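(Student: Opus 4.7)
The plan is to invoke the general eigenvector perturbation bound of Lemma \ref{lem:eigenvector_perturbation} with $Y=X$ and $Y^*=A\circ z^*z^{*\H}$. By Lemma \ref{lem:no_additive_noise} the leading unit eigenvector of $Y^*$ is exactly $u^*$, while $u$ is the leading eigenvector of $Y=X$ by the definition of the spectral method, and $Yv^*/\norm{Yv^*}=Xu^*/\norm{Xu^*}=\tilde u$. Hence, once its hypothesis is verified, the lemma's conclusion is already an upper bound on $\inf_{b\in\mathc_1}\norm{u-\tilde u b}$. What remains is to control the four quantities $\mu_1^*$, $\mu_1^*-\mu_2^*$, $\mu_2^*/\mu_1^*$, and $\norm{Y-Y^*}=\sigma\norm{A\circ W}$.

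By Lemma \ref{lem:no_additive_noise} the eigenvalues of $Y^*$ coincide with those of the Erd\"{o}s--R\'enyi adjacency matrix $A$. Since $\E A=p(J_n-I_n)$ has top eigenvalue $p(n-1)$ (with eigenvector $\one_n/\sqrt n$) and all remaining eigenvalues equal to $-p$, standard concentration for sparse random graphs gives $\norm{A-\E A}\leq C\sqrt{np}$ with probability at least $1-n^{-10}$ whenever $np/\log n$ exceeds an absolute constant; on this event Weyl's inequality yields $\mu_1^*\geq np/2$, $\mu_1^*-\mu_2^*\geq np/2$, and $|\mu_2^*|\leq 2C\sqrt{np}$, so in particular $|\mu_2^*|/\mu_1^*\lesssim 1/\sqrt{np}$. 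Independently, $A\circ W$ is a sparse Hermitian matrix with independent mean-zero off-diagonal entries of variance $p$ and sub-Gaussian tails, so matrix-concentration arguments give $\norm{A\circ W}\leq C'\sqrt{np}$ with probability at least $1-n^{-10}$.

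On the intersection of these two events, provided $\sigma\lesssim\sqrt{np}$ the hypothesis $\norm{Y-Y^*}\leq\min\{\mu_1^*-\mu_2^*,\mu_1^*\}/4$ of Lemma \ref{lem:eigenvector_perturbation} is satisfied; outside this regime the right-hand side $C_2(\sigma^2+\sigma)/(np)$ already exceeds $2\geq\norm{u-\tilde u b}$ for any $b\in\mathc_1$, so the claim is trivial with a large enough $C_2$. Plugging the bounds into Lemma \ref{lem:eigenvector_perturbation}, the overall prefactor $80/[9(\mu_1^*-\mu_2^*)]$ is $O(1/(np))$; the first term inside the parentheses contributes $O(1/(np))\cdot\sigma^2 np=O(\sigma^2)$ once the prefactor is applied; and the second term contributes $O(1/\sqrt{np})\cdot\sigma\sqrt{np}=O(\sigma)$. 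Summing gives $\inf_{b\in\mathc_1}\norm{u-\tilde u b}\lesssim (\sigma^2+\sigma)/(np)$, as claimed, and the two $n^{-10}$ tail events plus a little slack yield the stated $4n^{-10}$ failure probability.

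The hard part is really only the two spectral inputs: $\norm{A-\E A}\lesssim\sqrt{np}$ and $\norm{A\circ W}\lesssim\sqrt{np}$ with explicit $n^{-10}$ tails and with constants small enough to verify the hypothesis of Lemma \ref{lem:eigenvector_perturbation}. Both are by now standard in the regime $np\gtrsim\log n$ (via matrix Bernstein or sharp results on sparse Wigner matrices), but care is needed so that no spurious $\log n$ factor creeps into the final rate and so that the $|\mu_2^*|/\mu_1^*$ saving of a $\sqrt{np}$ factor--which is what turns the crude $O(\sigma/\sqrt{np})$ Davis--Kahan rate into the refined $O(\sigma/(np))$ contribution--is actually realized.
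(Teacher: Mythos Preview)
Your proposal is correct and follows essentially the same route as the paper: apply Lemma \ref{lem:eigenvector_perturbation} with $Y=X$, $Y^*=A\circ z^*z^{*\H}$, identify the spectrum of $Y^*$ with that of $A$ via Lemma \ref{lem:no_additive_noise}, and feed in the operator-norm bounds $\norm{A-\E A}\lesssim\sqrt{np}$ and $\norm{A\circ W}\lesssim\sqrt{np}$ (which the paper packages as Lemmas \ref{lem:operator_norm_A} and \ref{lem:operator_norm_W}) to get $\mu_1^*\asymp\mu_1^*-\mu_2^*\asymp np$, $|\mu_2^*|\lesssim\sqrt{np}$, and $\norm{Y-Y^*}\lesssim\sigma\sqrt{np}$. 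Your explicit handling of the regime $\sigma\gtrsim\sqrt{np}$, where the hypothesis of Lemma \ref{lem:eigenvector_perturbation} may fail but the claimed bound is trivially at least $2$, is a point the paper's proof leaves implicit.
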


Proposition \ref{prop:eigenvector_perturbation} shows that $u$ is well-approximated by its first-order approximation $\tilde u$ (up to a phase) with an approximation error that is at most in the order of $(\sigma^2+\sigma)/np$. Note that we can show $\inf_{b\in\mathc_1}\norm{u-u^*b}$ is of order $\sigma/\sqrt{np}$ by using Davis-Kahan Theorem. This is much larger than the upper bound derived in Proposition \ref{prop:eigenvector_perturbation}, particularly when $np/\sigma^2$ is large. As a result, $\tilde u$ provides a precise characterization of $u$ with negligible $\ell_2$ error.

\subsection{Sharp $\ell_2$ Analysis of  The Spectral Estimator}\label{sec:sharp_constant}
In this section, we will conduct a sharp analysis of the performance of the spectral estimator $\hat z$ using the first-order approximation $\tilde u$ of the eigenvector $u$. According to Proposition \ref{prop:eigenvector_perturbation}, $u$ is close to $\tilde u$ (up to a phase) with a small difference. Then intuitively, $\hat z$ should be close to its counterpart that uses $\tilde u$ instead of $u$ in (\ref{eqn:spectral_estimator_def}), up to a global phase. For each $j\in[n]$, the distance of $\tilde u_j/\abs{\tilde u_j}$ from $z^*_j$ is essentially determined by $\overline{z^*_j}\tilde u_j$.
By the definition in (\ref{eqn:tilde_u_def_intro}), $\tilde u_j$ is proportional to $[Xu^*]_j$, the $j$th coordinate of $Xu^*$. With (\ref{eqn:X_matrix_form}), it leads to $\overline{z^*_j}\tilde u_j \propto \lambda^* \overline{z^*_j}u^*_j + \sigma \sum_{k\neq j}A_{jk}W_{jk} \overline{z^*_j}u_k^*$. Here the  first term $\lambda^* \overline{z^*_j}u^*_j$ can be interpreted as the signal as it is related to the population quantity $u^*_j$, which gives the exact recovery of the spectral method in the no-additive-noise case in Proposition \ref{prop:no_additive_noise}. As $u^*$ is close to $z^*/\sqrt{n}$, the second term is approximately equal to $n^{-1/2}\sum_{k\neq j}A_{jk}W_{jk} \overline{z^*_j}z_k^*$. Its contribution toward the estimation error is essentially determined by its imaginary part $n^{-1/2}\im(\sum_{k\neq j}A_{jk}W_{jk} \overline{z^*_j}z_k^*)$, which can be interpreted as the main error term. Summing over all $j\in[n]$,  the signals and the main error terms together lead to the minimax risk $\sigma^2/(2np)$. At the same time, contributions of approximation errors such as $\inf_{b\in\mathc_1} \norm{u-\tilde ub}$ turn out to be negligible. This leads to the following theorem on the performance of the spectral estimator.

\begin{theorem}\label{thm:phase_main}
There exist constants $C_1,C_2,C_3>0$ such that if $\frac{np}{\log n}>C_1$ and $\frac{np}{\sigma^2}>C_2$, we have
\begin{align*}
\ell(\hat z,z^*)\leq \br{1+C_3\br{\br{\frac{\sigma^2}{np}}^\frac{1}{4} + \sqrt{\frac{\log n}{np}}  +\frac{1}{\log (np)}}}\frac{\sigma^2}{2np},
\end{align*}
with probability at least $1-n^{-9} -\ebr{-\frac{1}{32}\br{\frac{np}{\sigma^2}}^\frac{1}{4}}$.
\end{theorem}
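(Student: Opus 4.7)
The plan is to perform an entrywise analysis by bridging $u$ and $z^*$ through the first-order approximation $\tilde u$. After possibly negating $\check u$, Proposition~\ref{prop:no_additive_noise} gives $\norm{\check u - \one_n/\sqrt n}_\infty \leq \delta_1/\sqrt n$ with $\delta_1 = O(\sqrt{\log n/(np)} + 1/\log\log n) = o(1)$, so $\check u_j > 0$ and $u^*_j/\abs{u^*_j} = z^*_j$ for every $j$. Let $b^*\in\mathc_1$ achieve the infimum in Proposition~\ref{prop:eigenvector_perturbation}, giving $\norm{u - \tilde u b^*}\leq C(\sigma^2+\sigma)/(np)$, and set $\bar z_j := \tilde u_j/\abs{\tilde u_j}$. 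Choosing $a = b^*$ in (\ref{eqn:ell_def}) and combining the triangle inequality $\abs{\hat z_j - z^*_j b^*}\leq\abs{\bar z_j - z^*_j}+\abs{\hat z_j - \bar z_j b^*}$ with the elementary bound $\abs{p/\abs{p} - q/\abs{q}}^2\leq\abs{p-q}^2/(\abs{p}\abs{q})$ (from the law of cosines) and Cauchy--Schwarz on the resulting cross term reduces the proof to showing
\begin{align*}
\frac{1}{n}\sum_{j=1}^n \abs{\bar z_j - z^*_j}^2 = (1+o(1))\frac{\sigma^2}{2np} \quad\text{and}\quad \frac{1}{n}\sum_{j=1}^n \frac{\abs{u_j - \tilde u_j b^*}^2}{\abs{u_j}\,\abs{\tilde u_j}} = o\!\br{\frac{\sigma^2}{np}}.
\end{align*}

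For the main term, Lemma~\ref{lem:no_additive_noise} and (\ref{eqn:X_matrix_form}) give $X u^* = \lambda^* u^* + \sigma(A\circ W) u^*$. Writing $g_j := \sum_{k\neq j} A_{jk} W_{jk} z^*_k\check u_k$ and $\mu := \lambda^*/\norm{Xu^*}$, a brief computation yields $\overline{z^*_j}\tilde u_j = \mu\check u_j(1 + \beta_j)$ with $\beta_j := \sigma\overline{z^*_j} g_j/(\lambda^*\check u_j)$ and real positive signal $\mu\check u_j$, so
\begin{align*}
\abs{\bar z_j - z^*_j}^2 = \frac{2(\im\beta_j)^2}{\abs{1+\beta_j}\br{\abs{1+\beta_j} + 1 + \re\beta_j}},
\end{align*}
which is bounded by $(1+C\abs{\beta_j})(\im\beta_j)^2$ on $\{\abs{\beta_j}\leq 1/2\}$ and by $4$ always. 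Conditional on $A$, $\im\beta_j$ is a mean-zero Gaussian with variance $\sigma^2\sum_{k\neq j}A_{jk}\check u_k^2/(2(\lambda^*\check u_j)^2)$; using $\lambda^* = (1+o(1))np$, $\check u_j = (1+O(\delta_1))/\sqrt n$, and the Bernstein concentration $\sum_{k\neq j}A_{jk}\check u_k^2 = (1+o(1))p$, one obtains $\sum_j\Var{\im\beta_j\mid A} = (1+o(1))\sigma^2/(2p)$, and a chi-squared-type concentration for the weighted quadratic form in the independent $\{W_{jk}\}$ (conditional on $A$) transfers this to a high-probability bound. Truncating at $T := (\sigma^2/(np))^{1/4}$ absorbs the Taylor slack into a $(1+CT)$ factor on $\{\abs{\beta_j}\leq T\}$, producing the $(\sigma^2/(np))^{1/4}$ correction, while the complementary event is shown to be empty with probability $1-\exp(-c(np/\sigma^2)^{1/4})$ via the Gaussian tail of $\beta_j$.

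For the residual sum, uniform coordinate-wise lower bounds $\min_j\abs{u_j}\abs{\tilde u_j}\gtrsim 1/n$ would give (via Cauchy--Schwarz) a total contribution $\lesssim n\norm{u-\tilde u b^*}^2\lesssim (\sigma+1)^2\sigma^2/(np^2)$, which is $o(\sigma^2/p)$ under $np/\sigma^2\to\infty$. However, such uniform lower bounds are unavailable under the stated conditions, since $np/(\sigma^2\log n)$ need not tend to infinity. I therefore split indices by $\min(\abs{u_j},\abs{\tilde u_j})\geq 1/(2\sqrt n)$, use the Cauchy--Schwarz bound on the good set, and bound the bad-set contribution via $\abs{\hat z_j - \bar z_j b^*}^2\leq 4$ together with a higher-moment Markov bound on the bad-index count $\abs{J_b}$: since, conditional on $A$, the entries of $\tilde u - u^*$ (and similarly of $u - u^*$) are jointly Gaussian with per-entry variance $\lesssim \sigma^2/(n^2 p)$, the Gaussian moments $\mathbb{E}[\abs{\tilde u_j - u^*_j}^{2k}\mid A]\lesssim (\sigma^2/(n^2p))^k k!$ yield $\mathbb{E}\abs{J_b}\lesssim n(9\sigma^2/(np))^k k!$, and the choice $k=2$ already delivers $\mathbb{E}\abs{J_b} = o(\sigma^2/p)$. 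The main obstacle is matching this bad-index concentration with the truncation threshold $T$ from the main-term analysis so that the single $(\sigma^2/(np))^{1/4}$ correction and $\exp(-(np/\sigma^2)^{1/4}/32)$ probability bound emerge; the remaining $\sqrt{\log n/(np)}$ and $1/\log\log n$ corrections then propagate from $\delta_1$ through the signal $\lambda^*\check u_j = (1+O(\delta_1))np/\sqrt n$.
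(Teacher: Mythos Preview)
Your overall strategy---entrywise analysis via the first-order approximation $\tilde u$, identifying the imaginary part of the relative noise as the leading contribution---matches the paper's. The decomposition differs: the paper does \emph{not} split $|\hat z_j - z_j^* b|$ into $|\bar z_j - z_j^*| + |\hat z_j - \bar z_j b|$ via the triangle inequality. Instead it writes $u_j$ directly as a sum of four pieces (signal $\lambda^*\check u_j$, main noise $\sigma\xi_j/\sqrt n$ with $\xi_j=\sum_{k\neq j}A_{jk}W_{jk}\overline{z_j^*}z_k^*$, a $\delta^*$-correction from $u^*-z^*/\sqrt n$, and the residual $\delta_j=u_j-\tilde u_j b$), then applies $|x/|x|-1|\leq |\im(x)/\re(x)|$ to the whole expression when thresholds $\gamma,\rho$ are not exceeded, and uses $\indic{x>y}\leq x^2/y^2$ otherwise. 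This way $\delta_j$ enters additively, and its total contribution is controlled by $\sum_j|\delta_j|^2=\norm{\delta}^2$ directly from Proposition~\ref{prop:eigenvector_perturbation}---no lower bound on $|u_j|$ is ever needed. The paper also separates $u^*$ into $z^*/\sqrt n+\delta^*$ first, so the main noise $\xi_j$ has deterministic weights $z_k^*$ rather than your random $\check u_k$, which lets it invoke the pre-established concentration in Lemma~\ref{lem:operator_norm_W}.

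Your route has a genuine gap in the residual step. The claim that ``the entries of $u - u^*$ are jointly Gaussian conditional on $A$'' is false: $u$ is the leading eigenvector of $X$ and depends on $W$ nonlinearly, so no Gaussian moment formula applies to $u_j-u^*_j$. This invalidates the higher-moment Markov bound on $|J_b|$ as written. The fix is easy and already implicit in your setup: handle the bad indices for $|u_j|$ not through Gaussianity but through $u_j=\tilde u_j b^*+\delta_j$ and the deterministic $\ell_2$ bound $\norm{\delta}\lesssim(\sigma^2+\sigma)/(np)$, which gives $|\{j:|\delta_j|>1/(4\sqrt n)\}|\leq 16n\norm{\delta}^2\lesssim (\sigma^2+\sigma)^2/(np^2)$, and this times $4/n$ is $o(\sigma^2/(np))$ under $np/\sigma^2\to\infty$. (For $\tilde u_j$ the approximate-Gaussian argument is essentially right, modulo the normalization $\norm{Xu^*}$ which is a scalar close to $np$ and can be factored out.) Once you make this correction, your argument goes through, but the paper's direct decomposition is cleaner precisely because it sidesteps the law-of-cosines denominator $|u_j||\tilde u_j|$ altogether.
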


Theorem \ref{thm:phase_main} is non-asymptotic and its asymptotic version is presented in Theorem \ref{thm:asymptotic}.  It covers the no-additive-noise case (i.e., Proposition \ref{prop:no_additive_noise}), as it implies that $\ell(\hat z,z^*)=0$ with high probability when $\sigma=0$. Theorem \ref{thm:phase_main} shows that $\ell(\hat z,z^*)$ is equal to $\sigma^2/(2np)$ up to a factor that is determined by $(\sigma^2/(np))^{1/4}$, $\sqrt{\log n/(np)}$, and $1/\log (np)$. The first term is related to various approximation errors including the one from Proposition \ref{prop:eigenvector_perturbation}. The second and third terms are derived from (\ref{eqn:no_additive_noise}).

We can make a comparison between Theorem \ref{thm:phase_main} and the existing result $\ell(\hat z,z^*)\lesssim (\sigma^2+1)/np$ in \cite{gao2021exact}. There are two main improvements. First, we obtain the exact constant 1/2 for the error term $\frac{\sigma^2}{np}$, which gives a more accurate characterization of the performance of the spectral estimator.
Second, the $1/np$ error term in $(\sigma^2+1)/np$ no longer exists in Theorem \ref{thm:phase_main}.
We further compare Theorem \ref{thm:phase_main} with the minimax lower bound for the phase synchronization problem. The paper  \cite{gao2021exact} proved that  there exist constants $C_4,C_5>0$ such that if $\frac{np}{\sigma^2}\geq C_4$, we have
\begin{align}\label{eqn:minimax}
\inf_{z\in\mathc^n}\sup_{z^*\in \mathc_1^n}\E\ell(z,z^*)\geq \br{1-C_5\br{\frac{\sigma^2}{np}+\frac{1}{n}}}\frac{\sigma^2}{2np}.
\end{align}
Compared with (\ref{eqn:minimax}), the spectral estimator $\hat z$ is exact minimax optimal as it not only achieves the correct rate $\sigma^2/(np)$ but also the correct constant $1/2$. Under the parameter regime as in Theorem \ref{thm:phase_main},  \cite{gao2021exact, gao2022sdp} showed that MLE,  GPM (if properly initialized), and SDP achieve the exact minimax risk. Theorem \ref{thm:phase_main} points out that the spectral method is as good as these methods.

\section{Orthogonal Group Synchronization}\label{sec:od}
In this section, we will extend our analysis to matrix synchronizations where the quantities of interest are orthogonal matrices instead of phases. The orthogonal group synchronization problem has been briefly introduced in Section \ref{sec:intro}. Here we provide more details about the problem.

Let $d>0$ be an integer.  Recall the definition of $\mathcal{O}(d)$ in (\ref{eqn:od_set_def}) and that
$Z^*_1,\ldots,Z^*_n\in\mathcal{O}(d)$. For each $1\leq j<k\leq n$, the observation $\mathx_{jk}\in\mathr^{d\times d}$ is given by
\begin{align}\label{eqn:od}
\mathx_{jk} :=
\begin{cases}
{Z_j^*  Z_k^{*\T} + \sigma \mathw_{jk}},\text{ if }A_{jk}=1,\\
0,\quad\quad\quad\quad\quad\quad\text{ if }A_{jk}=0,
\end{cases}
\end{align}
where $A_{jk}\sim \text{Bernoulli}(p)$ and $\mathw_{jk}\sim \mathcal{MN}(0,I_d,I_d)$, i.e., the standard matrix Gaussian distribution\footnote{A random matrix $X$ follows a matrix Gaussian distribution $\mathcal{MN}(M,\Sigma,\Omega)$ if its density function is proportional to $\exp\left(-\frac{1}{2}\text{Tr}\left(\Omega^{-1}(X-M)^{\T}\Sigma^{-1}(X-M)\right)\right)$.}. We assume $\{A_{jk}\}_{1\leq j<k\leq n},\{\mathw_{j,k}\}_{1\leq j<k\leq n}$ are all independent of each other. Similar to the phase synchronization problem, the observations are missing at random with additive Gaussian noises. The goal is to recover $Z_1^*,\ldots,Z_n^*$  from  $\{\mathx_{jk}\}_{1\leq j<k\leq n}$ and $\{A_{j,k}\}_{1\leq j<k\leq n}$.

The data matrix $\mathx\in\mathr^{nd\times nd}$ can be written equivalently in a way that is analogous to (\ref{eqn:X_matrix_form}). Define $A_{jj}:=0$ and $A_{kj}:=A_{jk}$ for  all $1\leq j<k\leq n$. Define $\mathw\in\mathc^{nd\times nd}$ such that $\mathw_{jj}:=0_{d\times d}$ and $\mathw_{kj}:= \mathw_{jk}^\T$ for  all $1\leq j<k\leq n$.  Then we have the expression:
\begin{align}\label{eqn:mathX_matrix_form}
\mathx = (A\otimes J_d) \circ (Z^*Z^{*\T} + \sigma \mathw) = (A\otimes J_d)\circ Z^*Z^{*\T} +\sigma (A\otimes J_d) \circ \mathw.
\end{align}
From (\ref{eqn:intro_EX_od}), the data matrix $\mathx$ can be seen as a noisy version of $pZ^*Z^{*\T}$.
Since the columns of $Z^*$ are  orthogonal to each other, we have the following eigendecomposition: $p Z^*Z^{*\T} =np (Z^*/\sqrt{n})  (Z^*/\sqrt{n})^\T$ where $Z^*/\sqrt{n} \in \matho(nd,d)$. That is, $np$ is the only non-zero eigenvalue of $p Z^*Z^{*\T}$ with multiplicity $d$.

The definition of the spectral estimator $\hat Z_1,\ldots, \hat Z_n$ is given in (\ref{eqn:intro_hat_Z_def}).
The mapping $\mathp:\mathr^{d\times d}\rightarrow \matho(d)$ is from the polar decomposition and is defined as follows. For any matrix $B\in\mathr^{d\times d}$ that is full-rank, it  admits a singular value decomposition (SVD):  $B=MDV^\T$ with $M,V\in\matho(d)$ and $D$ a diagonal matrix. Then its polar decomposition is $B=(MV^\T)(VDV^\T)$ and $\mathp(B):= MV^\T$  is defined as its first factor.

Recall that $\check u$ is the leading eigenvector of $A$ and the population eigenspace $U^*$ is defined in (\ref{eqn:U_star_def}).
That is, $U^*\in\mathr^{nd\times d}$ and its $j$th submatrix is $U^*_j = \check u_j Z^*_j\in\mathr^{d\times d}$ for each $j\in[n]$. Following the proof of Lemma \ref{lem:no_additive_noise}, we can show $U^*$ is the leading eigenspace of $ (A\otimes J_d)\circ Z^*Z^{*\T}$:
\begin{lemma}\label{lem:no_additive_noise_od}
Denote $\lambda^*_1\geq \lambda_2^*\geq \ldots \geq \lambda^*_{nd}$ as the eigenvalues of $ (A\otimes J_d)\circ Z^*Z^{*\T}$. Then $\lambda^*_1= \lambda^*_2=\ldots =\lambda^*_d$, all equal the leading eigenvalue of $A$.  In addition, $\lambda^*_{d+1}$ is equal to the second largest eigenvalue of $A$. Furthermore, 
 $U^*$ is the eigenspace of $ (A\otimes J_d)\circ Z^*Z^{*\T}$ corresponding to $\lambda^*_1$, i.e., 
\begin{align*}
 ((A\otimes J_d)\circ Z^*Z^{*\T}) U^* = \lambda^*_1 U^*.
\end{align*}
\end{lemma}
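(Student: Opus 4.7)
The plan is to reduce the statement to the scalar case of Lemma~\ref{lem:no_additive_noise} by exhibiting an explicit orthogonal similarity between $M:=(A\otimes J_d)\circ Z^*Z^{*\T}$ and $A\otimes I_d$. To that end, I would introduce the block-diagonal matrix $D\in\mathr^{nd\times nd}$ whose $j$-th diagonal block is $Z^*_j\in\matho(d)$. Then $D$ is orthogonal: $D^\T D$ is block-diagonal with blocks $Z^{*\T}_j Z^*_j=I_d$, so $D^\T D=I_{nd}$.

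The next step is a direct block computation verifying
\begin{align*}
M=D(A\otimes I_d)D^\T.
\end{align*}
The $(j,k)$-block of $Z^*Z^{*\T}$ is $Z^*_j Z^{*\T}_k$, and the $(j,k)$-block of $A\otimes J_d$ is $A_{jk}J_d$; since $J_d$ has all entries equal to one, their Hadamard product has $(j,k)$-block $A_{jk}Z^*_j Z^{*\T}_k$, which matches the $(j,k)$-block $Z^*_j(A_{jk}I_d)Z^{*\T}_k$ of $D(A\otimes I_d)D^\T$. This is the natural block-form analogue of the unitary conjugation $A\circ z^*z^{*\H}=\text{diag}(z^*)\,A\,\text{diag}(z^*)^\H$ underlying the scalar version in Lemma~\ref{lem:no_additive_noise}.

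Once the similarity is established, the eigenvalue statements are immediate: orthogonal similarity preserves the spectrum, and the eigenvalues of $A\otimes I_d$ are exactly those of $A$, each with multiplicity $d$. Writing $\check\lambda_1\geq\check\lambda_2\geq\cdots$ for the ordered eigenvalues of $A$, this gives $\lambda^*_1=\cdots=\lambda^*_d=\check\lambda_1$ and $\lambda^*_{d+1}=\check\lambda_2$ (the latter using that $\check\lambda_1$ has multiplicity one in $A$, which holds with high probability for the Erd\"{o}s--R\'enyi graph in the density regime under consideration).

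For the final claim, it suffices to verify $MU^*=\lambda^*_1 U^*$ and that $U^*$ has orthonormal columns. Using $U^*_j=\check u_j Z^*_j$, the $j$-th block of $MU^*$ equals
\begin{align*}
\sum_k A_{jk}Z^*_j Z^{*\T}_k\bigl(\check u_k Z^*_k\bigr)=Z^*_j\Bigl(\sum_k A_{jk}\check u_k\Bigr)=\check\lambda_1\check u_j Z^*_j=\lambda^*_1 U^*_j,
\end{align*}
using $Z^{*\T}_k Z^*_k=I_d$ and $A\check u=\check\lambda_1\check u$, while orthonormality follows from $U^{*\T}U^*=\sum_j \check u_j^2\, Z^{*\T}_j Z^*_j=\norm{\check u}^2 I_d=I_d$. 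Since $\lambda^*_1$ has geometric multiplicity exactly $d$ by the spectrum count, $U^*$ spans its eigenspace. The whole argument is essentially bookkeeping; the one conceptual step is spotting the similarity $M=D(A\otimes I_d)D^\T$, so I do not anticipate a real obstacle.
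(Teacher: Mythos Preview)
Your proposal is correct and follows essentially the same idea as the paper's proof: the paper argues coordinate-wise (multiply the eigenvector equation $Au'=\lambda'u'$ by $z^*_j$ on each block) to show every eigenvalue of $A$ is an eigenvalue of the conjugated matrix with multiplicity $d$ and vice versa, which is exactly your similarity $M=D(A\otimes I_d)D^\T$ written out entry by entry. Your packaging via an explicit orthogonal conjugation is slightly cleaner but not a genuinely different route; the one caveat is that the identity $MU^*=\lambda^*_1 U^*$ holds unconditionally, so you need not invoke simplicity of $\check\lambda_1$ for the lemma as stated (that simplicity only matters later, when a positive gap $\lambda^*_d-\lambda^*_{d+1}$ is required).
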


Following the proof of  Proposition \ref{prop:no_additive_noise}, particularly using (\ref{eqn:no_additive_noise}),  we can further establish the exact recovery of $\hat Z$, up to an orthogonal matrix,  in the no-additive-noise case.

\begin{proposition}\label{prop:no_additive_noise_od}
Consider the no-additive-noise case where $\sigma=0$. There exists some constant $C_1>0$ such that if $\frac{np}{\log n}> C_1$, we have $\ell(\hat z,z^*)=0$ with probability at least $1-7n^{-10}$.
\end{proposition}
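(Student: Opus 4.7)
The plan is to mirror the argument for the phase synchronization case (Lemma \ref{lem:no_additive_noise} combined with Proposition \ref{prop:no_additive_noise}), using the equivariance of the polar decomposition: for any nonzero $c\in\mathr$ and $Q\in\matho(d)$, one has $\mathp(cQ)=\mathrm{sign}(c)\,Q$, since $cQ$ admits the SVD $(\mathrm{sign}(c)\,Q)\cdot(|c|I_d)\cdot I_d$.

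When $\sigma=0$, the data matrix reduces to $\mathx = (A\otimes J_d)\circ Z^*Z^{*\T}$. By Lemma \ref{lem:no_additive_noise_od}, its top $d$ eigenvalues all equal $\lambda_1(A)$, while $\lambda^*_{d+1}=\lambda_2(A)$, and the columns of $U^*$ span the corresponding $d$-dimensional eigenspace. Under $np\gtrsim \log n$ the Erd\H{o}s-R\'enyi adjacency matrix $A$ has a simple leading eigenvalue with high probability, so the top-$d$ eigenspace of $\mathx$ is well-defined. Moreover, a direct computation gives $U^{*\T}U^* = \sum_{j=1}^n \check u_j^{2}\, Z^{*\T}_j Z^*_j = \norm{\check u}^{2}\, I_d = I_d$, so $U^*$ has orthonormal columns, and hence $U = U^* O$ for some $O\in\matho(d)$. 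Blockwise this reads $U_j = \check u_j\, Z^*_j\, O$ for every $j\in[n]$.

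Next I would invoke Proposition \ref{prop:no_additive_noise}: with probability at least $1-4n^{-10}$, there exists $b\in\{+1,-1\}$ with $|\check u_j - b/\sqrt{n}| < 1/\sqrt{n}$ for every $j\in[n]$, which forces $\mathrm{sign}(\check u_j) = b$ uniformly in $j$ and in particular $\check u_j\neq 0$. Hence each $U_j$ is full-rank, the first branch of (\ref{eqn:intro_hat_Z_def}) activates, and the polar-decomposition identity above yields $\hat Z_j = \mathp(\check u_j\, Z^*_j O) = b\, Z^*_j O$ for every $j$. Stacking the blocks, $\hat Z = Z^*(bO)$ with $bO\in\matho(d)$, and the orthogonal-group invariance built into the definition of $\ellod$ yields $\ellod(\hat Z,Z^*)=0$.

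The only mildly delicate point is checking that $U$ and $U^*$ span the same subspace, which requires the spectral gap $\lambda_1(A)>\lambda_2(A)$. This is standard for Erd\H{o}s-R\'enyi graphs with $np\gtrsim \log n$ (connectivity plus Perron-Frobenius), and in any case the high-probability event on which Proposition \ref{prop:no_additive_noise} holds already pins down $\check u$ sharply enough to imply it. Everything else is a direct transcription of the scalar phase-synchronization argument to the $d\times d$ block setting.
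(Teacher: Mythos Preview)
Your proposal is correct and follows essentially the same route as the paper's proof: identify the top eigenspace of $\mathx$ with $U^*$ via Lemma \ref{lem:no_additive_noise_od}, then use the $\ell_\infty$ control on $\check u$ from Proposition \ref{prop:no_additive_noise} to force all $\check u_j$ to have the same sign, and conclude via $\mathp(\check u_j Z^*_j O)=\mathrm{sign}(\check u_j)\,Z^*_j O$. The paper's own argument is slightly terser, writing ``$U=U^*$'' rather than your $U=U^*O$ for some $O\in\matho(d)$; your version is in fact more careful here, since the top eigenvalue has multiplicity $d$ and the eigenbasis is only determined up to a right orthogonal factor.
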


Similar to the phase synchronization, we can study the first-order approximation of the eigenspace $U$. Denote $\Lambda:=\text{diag}(\lambda_1,\ldots,\lambda_d)\in\mathr^{d\times d}$ as the diagonal matrix of the $d$ largest eigenvalues of $\mathx$. Then $U$ can be expressed as $U = \mathx U\Lambda^{-1}$. Define
\begin{align}\label{eqn:tilde_U_def}
\tilde U := \argmin_{U'\in\matho(nd,d)}\fnorm{U'-\mathx U^* }^2.
\end{align}
Then $\tilde U$ is the projection of $\mathx U^*$ onto $\matho(nd,d)$. This is similar to the definition of $\tilde u$ in (\ref{eqn:tilde_u_def_intro}) for the phase synchronization, where $\tilde u$ is the projection of $Xu^*$ onto the unit sphere. As a result, $\tilde U$ can be regarded as the first-order approximation of $U$.

The following lemma provides an upper bound for a leading eigenspace and its first-order approximation of two arbitrary Hermitian matrices. It is an extension of Lemma \ref{lem:eigenvector_perturbation} which is only about the perturbation of a leading eigenvector. The proof of Lemma \ref{lem:eigenspace_perturbation} follows that of Lemma \ref{lem:eigenvector_perturbation} but  is more involved, as  it needs to deal with matrix multiplication which is not commutative.

\begin{lemma}\label{lem:eigenspace_perturbation}
Consider two symmetric matrices $Y,Y^*\in\mathr^{n\times n}$. Let $\mu^*_1\geq\mu^*_2\geq \ldots \geq \mu^*_n$ be the  eigenvalues of  $Y^*$. Let $V^*\in\mathr^{n\times d}$ (resp. $V$) be the leading eigenspace of $Y^*$ (resp. $Y$) corresponding to its $d$ largest eigenvalues.
Define $\tilde V := \argmin_{V'\in\matho(n,d)}\fnorm{V'-YV^*}^2$.
If $\norm{Y -Y^*}\leq  \min\{\mu_d^*-\mu_{d+1}^*,\mu_d^*\}/4$, we have
\begin{align*}
&\inf_{O\in\matho(d)}\norm{V-\tilde V O} \leq  \frac{16\sqrt{2}}{3\br{\mu_d^* - \mu_{d+1}^*}\mu_d^*}\br{\frac{2\mu_1^*}{3(\mu_d^*-\mu_{d+1}^*)} + 1}\norm{Y-Y^*}^2 \\
&\quad + \frac{8\sqrt{2}}{3\br{\mu_d^* - \mu_{d+1}^*}\mu_d^*} \br{\frac{4\mu_1^*\br{\mu_1^*-\mu_d^*}}{\mu_d^* - \mu_{d+1}^*} + 2(\mu_1^* - \mu_d^*) +  \max\{|\mu^*_{d+1}|,|\mu^*_n|\}} \norm{Y-Y^*}.
\end{align*}
\end{lemma}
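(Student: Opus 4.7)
The plan is to mimic the proof of Lemma \ref{lem:eigenvector_perturbation} at the matrix level. The scalar top eigenvalue $\mu_1$ of $Y$ is replaced by the $d\times d$ diagonal $\Lambda = \text{diag}(\mu_1,\ldots,\mu_d)$ with $YV = V\Lambda$, the phase $b \in \mathc_1$ by an orthogonal matrix $O \in \matho(d)$, and the scalar $\norm{Yv^*}$ by the positive semi-definite polar factor $P \in \mathr^{d\times d}$ of $YV^*$, so that $\tilde V P = YV^*$. Writing $\Pi := V V^\T$, I would first decompose $V - \tilde V O$ into its $\Pi$- and $(I-\Pi)$-parts. Since $\Pi V = V$, the $(I-\Pi)$-part is simply $-(I-\Pi)\tilde V O$, whose operator norm $\norm{(I-\Pi)\tilde V}$ is independent of $O$.

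Next, I would choose $O$ as the orthogonal factor in the polar decomposition of $V^\T\tilde V$, so that $V^\T\tilde V O$ is symmetric and positive semi-definite. The $\Pi$-part then equals $V(I - V^\T\tilde V O)$, whose operator norm is $\max_i(1 - \sigma_i(V^\T\tilde V)) = \max_i(1 - \cos\theta_i)$ where $\theta_i$ are the principal angles between $V$ and $\tilde V$; applying $1 - \cos\theta_i \le \sin^2\theta_i$ bounds this by $\norm{(I-\Pi)\tilde V}^2$. Hence
\begin{align*}
\norm{V-\tilde V O} \le \norm{(I-\Pi)\tilde V} + \norm{(I-\Pi)\tilde V}^2,
\end{align*}
and it suffices to bound $\norm{(I-\Pi)\tilde V}$.

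Because $\Pi$ and $Y$ commute, the matrix analog of the key identity in Lemma \ref{lem:eigenvector_perturbation} takes the clean form
\begin{align*}
(I-\Pi)\tilde V\,P \;=\; (I-\Pi)YV^* \;=\; Y(I-\Pi)V^* \;=:\; YM,
\end{align*}
where $M := (I-\Pi)V^*$. The hypothesis $\norm{Y-Y^*} \le \mu_d^*/4$ gives $\sigma_\min(P) = \sigma_\min(YV^*) \ge \sigma_\min(Y^*V^*) - \norm{Y-Y^*} = \mu_d^* - \norm{Y-Y^*} \ge \frac{3}{4}\mu_d^*$, so $\norm{(I-\Pi)\tilde V} \le \frac{4}{3\mu_d^*}\norm{YM}$. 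To bound $\norm{YM}$, I would split $Y = Y^* + (Y-Y^*)$: the piece $(Y-Y^*)M$ contributes at most $\norm{Y-Y^*}\cdot\norm{M} \le \norm{Y-Y^*}^2/(\mu_d^*-\mu_{d+1}^*)$ via the Davis--Kahan bound $\norm{M} \le \norm{Y-Y^*}/(\mu_d^*-\mu_{d+1}^*)$. For $Y^*M$, decomposing $Y^* = V^*\Lambda^*(V^*)^\T + Q$ with $Q$ the restriction of $Y^*$ to the orthogonal complement of $V^*$ (so $\norm{Q} \le \mu_{d+1}^*$), and using the cosine--sine identity $\norm{(V^*)^\T M} = \norm{I - (V^*)^\T\Pi V^*} \le \norm{M}^2$, yields $\norm{Y^*M} \le \mu_1^*\norm{M}^2 + \mu_{d+1}^*\norm{M}$.

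The remaining piece of the linear coefficient, involving $\mu_1^*(\mu_1^*-\mu_d^*)$, arises from a further sharpening forced by non-commutativity between $Y$ (acting on the $n$-dimensional side) and $\Lambda, P, O$ (acting on the $d$-dimensional side). Using the Sylvester identity $M\Lambda^* - YM = -(I-\Pi)(Y-Y^*)V^*$ together with the decomposition $\Lambda^* = \mu_d^* I + (\Lambda^* - \mu_d^* I)$ introduces a cross-term scaling as $\mu_1^*(\mu_1^*-\mu_d^*)\norm{Y-Y^*}/(\mu_d^*-\mu_{d+1}^*)^2$; after dividing by $\sigma_\min(P)$, this contributes the missing part of the linear coefficient. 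The main obstacle throughout is precisely this non-commutativity: it forces the polar-factor choice of $O$ to make the parallel error second-order, and it makes the spread $\mu_1^*-\mu_d^*$ of the top eigenvalues of $Y^*$ enter as a genuinely new source of linear error not present when $d=1$.
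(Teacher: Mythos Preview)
Your route is correct and genuinely different from the paper's. The paper does \emph{not} invert the polar factor $P$; instead it writes $\tilde V = YV^*S$ with $S=ND^{-1}N^\T$, forms the Sylvester-type equation
\[
Y(I-VV^\T)\tilde V - (I-VV^\T)\tilde V\,M \;=\; (I-VV^\T)Y\bigl(\tilde V - V^*SM\bigr),\qquad M=\mathrm{diag}(\mu_1,\ldots,\mu_d),
\]
and extracts the factor $1/(\mu_d-\mu_{d+1})$ from the gap on the left. The right-hand side is then expanded using $YV^*=V^*M^*+(Y-Y^*)V^*$, which produces a commutator $M^*S-SM$; bounding it by $\|S\|\bigl(2(\mu_1^*-\mu_d^*)+\|Y-Y^*\|\bigr)$ is precisely where the $\mu_1^*(\mu_1^*-\mu_d^*)$ term in the statement comes from. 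By contrast, your key identity $(I-\Pi)\tilde V P = Y(I-\Pi)V^*$ bypasses the Sylvester equation entirely and never generates that commutator: your bound on $\|(I-\Pi)\tilde V\|$ is
\[
\frac{4}{3\mu_d^*}\Bigl(\mu_1^*\|M\|^2 + \mu_{d+1}^*\|M\| + \|Y-Y^*\|\,\|M\|\Bigr),
\]
with no $(\mu_1^*-\mu_d^*)$ contribution at all. So your last paragraph is misdirected: the $\mu_1^*(\mu_1^*-\mu_d^*)$ term is not ``forced by non-commutativity'' in your framework, nor is it a sharpening---it is an artifact of the paper's Sylvester approach and makes the linear coefficient \emph{larger}, not smaller. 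Your own argument simply doesn't need it.

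One caveat on constants: with Davis--Kahan $\|M\|\le 2\|Y-Y^*\|/(\mu_d^*-\mu_{d+1}^*)$ and the $\sqrt{2}$ from the $\sin\Theta$--to--$\inf_O\|V-\tilde VO\|$ conversion, your quadratic coefficient works out to roughly $\frac{2\mu_1^*}{\mu_d^*-\mu_{d+1}^*}+1$ versus the statement's $\frac{4\mu_1^*}{3(\mu_d^*-\mu_{d+1}^*)}+1$, while your linear coefficient is just $\mu_{d+1}^*$, strictly smaller than the statement's. So you prove a bound of the same form and strength (indeed, better in the linear term) but not the \emph{exact} inequality as written; for the downstream applications in the paper (Proposition~\ref{prop:eigenspace_perturbation} and Theorem~\ref{thm:od}), which only use the bound up to constants, your version is equally effective. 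Your choice of $O$ via the polar factor of $V^\T\tilde V$ to make the parallel error second-order is a nice touch; the paper simply cites the standard inequality $\inf_O\|V-\tilde VO\|\le\sqrt{2}\,\|(I-VV^\T)\tilde V\|$.
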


 Lemma \ref{lem:eigenspace_perturbation} includes Lemma \ref{lem:eigenvector_perturbation} as a special case when $d=1$. For $d>1$, if $\mu_1^* =\mu_d^*$, i.e., the largest $d$ eigenvalues of $Y^*$ are all equal,
 the upper bound in  Lemma \ref{lem:eigenspace_perturbation} simplifies to
\begin{align*}
\inf_{O\in\matho(d)}\|{V-\tilde V O}\| &\lesssim \frac{1}{\mu_d^* - \mu_{d+1}^*}\Bigg(\br{\frac{1}{\mu_d^* - \mu_{d+1}^*} + \frac{1}{\mu_d^*}} \norm{Y-Y^*}^2 \\
&\quad\quad\quad\quad\quad\quad + \frac{\max\{|\mu^*_{d+1}|,|\mu^*_n|\}}{\mu_d^*}\norm{Y-Y^*}\Bigg),
\end{align*} 
  which is similar in form to the upper bound in Lemma \ref{lem:eigenvector_perturbation}. This expression can be used in the $\matho(d)$ synchronization problem as $\lambda_1^*$ is shown to be equal to $\lambda_d^*$  in Lemma \ref{lem:no_additive_noise_od}.
A direct application of this expression leads to the  following proposition regarding the perturbation between $U$ and $\tilde U$.

\begin{proposition}\label{prop:eigenspace_perturbation}
Assume $2\leq d \leq C_0$ for some constant $C_0>0$.
There exist constants $C_1,C_2,C_3>0$ such that if $\frac{np}{\log n}>C_1$ and $\frac{np}{\sigma^2}>C_2$, we have
\begin{align*}
\inf_{O\in\matho(d)}\norm{U-\tilde UO}\leq C_3\frac{\sigma^2d + \sigma\sqrt{d}}{np},
\end{align*}
with probability at least $1-6n^{-10}$.
\end{proposition}

When $d=1$, Proposition \ref{prop:eigenspace_perturbation} reduces to Proposition \ref{prop:eigenvector_perturbation}. With  Proposition \ref{prop:eigenspace_perturbation},
we can  carry out a sharp $\ell_2$ analysis of the performance of the spectral estimator $\hat Z$ using $\tilde U$.
The loss function is defined analogously to (\ref{eqn:ell_def}) as
\begin{align*}
\ellod(\hat Z,Z^*):= \min_{O\in\matho(d)} \frac{1}{n}\fnorm{\hat Z_j -Z_j^*O}^2.
\end{align*}
In this way, we have the following theorem which is similar to Theorem \ref{thm:phase_main}. Its asymptotic version is given in Theorem \ref{thm:od_asymptotic}. The proof of Theorem \ref{thm:od} follows that of Theorem \ref{thm:phase_main} but is more complicated due to the existence of the mapping $\mathp$ in the definition of the spectral method. To prove Theorem \ref{thm:od}, note that for each $j\in[n]$, $\normf{\hat Z_j - Z^*_j} =\normf{\mathp(U_j)- Z^*_j} = \normf{\mathp(Z_j^{*\T}U_j)- I_d} $ where $Z_j^{*\T}U_j$ can be approximated by $Z_j^{*\T}\tilde U_j$ according to Proposition (\ref{prop:eigenspace_perturbation}). The term $Z_j^{*\T}\tilde U_j$ can be further expanded using (\ref{eqn:mathX_matrix_form}) and Lemma \ref{lem:no_additive_noise_od}, leading to $\sum_{k\neq j}A_{jk}Z_j^{*\T}\mathw_{jk}Z^*_k$ and several approximation error terms. Careful analysis of $\sum_{k\neq j}A_{jk}Z_j^{*\T}\mathw_{jk}Z^*_k$ eventually leads to the minimax risk $d(d-1)\sigma^2/(2np)$ and all the other error terms turn out to be negligible.

\begin{theorem}\label{thm:od}
Assume $2\leq d \leq C_0$ for some constant $C_0>0$.
There exist constants $C_1,C_2,C_3$  such that if $\frac{np}{\log n}>C_1$ and $\frac{np}{\sigma^2}>C_2$, we have
\begin{align*}
\ellod(\hat Z,Z^*)&\leq \br{1+ C_3\br{ \br{\frac{\sigma^2}{np}}^\frac{1}{4} + \sqrt{\frac{\log n}{np}} + \frac{1}{\log (np)}} }\frac{d(d-1)\sigma^2}{2np}
\end{align*}
holds with probability at least $1-n^{-9}-\ebr{-\frac{1}{32}\br{\frac{np}{\sigma^2}}^\frac{1}{4}}$.
\end{theorem}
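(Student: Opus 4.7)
My plan is to follow the blueprint of the phase-case proof (Theorem \ref{thm:phase_main}), replacing scalar phases by orthogonal matrices and scalar arithmetic by $d\times d$ matrix arithmetic. The starting point is the equivariance of the polar map under orthogonal multiplication on either side: for $Z_j^*,O\in\matho(d)$, $\fnorm{\hat Z_j - Z_j^* O}=\fnorm{\mathp(Z_j^{*\T} U_j)-O}$. Applying Proposition \ref{prop:eigenspace_perturbation}, I would choose $O^*\in\matho(d)$ that attains $\fnorm{U-\tilde U O^*}\leq C(\sigma^2 d+\sigma\sqrt d)/(np)$; under the theorem's assumptions this is $o(\sigma/\sqrt{np})$, so after squaring and averaging it is already a lower-order contribution. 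Together with a quantitative perturbation bound for $\mathp$ on the high-probability event that every $U_j$ is well-conditioned (hence $\det U_j\neq 0$ so the fallback in (\ref{eqn:intro_hat_Z_def}) is never used), one obtains
\begin{align*}
\ellod(\hat Z,Z^*)\leq \frac{1}{n}\sum_{j=1}^n\fnorm{\mathp(Z_j^{*\T}\tilde U_j)-I_d}^2+o\!\br{\frac{\sigma^2}{np}}.
\end{align*}

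\textbf{First-order form of $\tilde U$.} Next I would derive an explicit expression for $Z_j^{*\T}\tilde U_j$. Writing $\tilde U=\mathx U^*(U^{*\T}\mathx^\T\mathx U^*)^{-1/2}$ and invoking Lemma \ref{lem:no_additive_noise_od} together with (\ref{eqn:mathX_matrix_form}), $\mathx U^*=\lambda_1^* U^*+\sigma N$, where the $j$th block of $N$ is $\sum_{k\neq j}A_{jk}\mathw_{jk}U^*_k$. Since $U^{*\T}U^*=I_d$, a short perturbation argument gives $(U^{*\T}\mathx^\T\mathx U^*)^{-1/2}=(\lambda_1^*)^{-1}(I_d+\Theta)$ with $\norm{\Theta}=O(\sigma/\sqrt{np})$. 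Substituting $U^*_k=\check u_k Z^*_k$ yields
\begin{align*}
Z_j^{*\T}\tilde U_j=\check u_j(I_d+M_j)+E_j,\qquad M_j:=\frac{\sigma}{\lambda_1^*\check u_j}\sum_{k\neq j}A_{jk}\check u_k\,Z_j^{*\T}\mathw_{jk}Z_k^*,
\end{align*}
with $E_j$ of strictly smaller order. By orthogonal invariance of the standard matrix Gaussian, the matrices $N_{jk}:=Z_j^{*\T}\mathw_{jk}Z_k^*$ are iid $\mathcal{MN}(0,I_d,I_d)$.

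\textbf{Polar Taylor expansion and identification of the main constant.} By Proposition \ref{prop:no_additive_noise}, $\check u_j=n^{-1/2}(1+o(1))$ uniformly in $j$, so the positive scalar $\check u_j$ drops out of $\mathp$. A uniform high-probability bound $\max_j\norm{M_j}=o(1)$, obtained from Gaussian/Bernstein concentration of $\sum_k A_{jk}\check u_k N_{jk}$ conditional on $A$, justifies the first-order expansion $\mathp(I_d+M_j)=I_d+\tfrac12(M_j-M_j^\T)+R_j$ with $\fnorm{R_j}\lesssim\fnorm{M_j}^2$. Hence the dominant contribution is the Gaussian chaos
\begin{align*}
\frac{1}{n}\sum_j\fnorm{\mathp(Z_j^{*\T}\tilde U_j)-I_d}^2=\frac{1}{4n}\sum_j\fnorm{M_j-M_j^\T}^2+(\text{higher order}).
\end{align*}
Only the diagonal $k_1=k_2$ pairs contribute in expectation; using $\E\fnorm{N_{jk}-N_{jk}^\T}^2=2d(d-1)$, $(\lambda_1^*)^2=(np)^2(1+o(1))$, $\check u_j^2\sim 1/n$, and $\sum_k A_{jk}\check u_k^2\sim p$, this expectation collapses exactly to $d(d-1)\sigma^2/(2np)$. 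Concentration of the chaos around its mean is supplied by a Hanson--Wright-type inequality applied entrywise (conditional on $A$, then integrating out $A$ using Proposition \ref{prop:no_additive_noise}), while the $k_1\neq k_2$ cross terms are of strictly smaller order by a standard decoupling argument.

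\textbf{Main obstacle.} The bulk of the work is bookkeeping: multiple small approximation channels ($\Theta$, $E_j$, $R_j$, the $U\approx\tilde U O^*$ residual, $\check u_j\approx n^{-1/2}$, $\lambda_1^*\approx np$) are each squarely negligible on their own, but their cross terms with the leading linear part $(M_j-M_j^\T)/2$ must be shown to contribute at most $\delta\cdot\sigma^2/(np)$ with $\delta$ matching the stated rate $(\sigma^2/(np))^{1/4}+\sqrt{\log n/(np)}+1/\log\log n$. Each cross term is handled by Cauchy--Schwarz against the sharp $\ell_2$ bound of Proposition \ref{prop:eigenspace_perturbation} and the $\ell_\infty$ bound (\ref{eqn:no_additive_noise}) of Proposition \ref{prop:no_additive_noise}, paralleling the phase-case proof of Theorem \ref{thm:phase_main}. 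The genuinely new difficulty compared with the phase case is the non-commutativity of matrix products inside $M_j$ and the need to extract the antisymmetric part of $M_j$ through the polar Taylor expansion --- the natural matrix analog of taking the imaginary part in the scalar phase case --- together with a matrix-valued concentration bound for $\fnorm{M_j-M_j^\T}^2$; since $d=O(1)$, the dimension-dependent prefactors are absorbed into the constants $C_1,C_2,C_3$.
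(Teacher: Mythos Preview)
Your overall architecture mirrors the paper's proof closely: reduce to the first-order approximation $\tilde U$ via Proposition \ref{prop:eigenspace_perturbation}, expand each block $Z_j^{*\T}\tilde U_j$ around a positive scalar multiple of $I_d$, extract the antisymmetric part $(M_j-M_j^\T)/2$ as the leading error, and match its summed squared Frobenius norm to $d(d-1)\sigma^2/(2np)$ via the auxiliary concentration lemmas. The identification of the antisymmetric part as the matrix analogue of the imaginary part in the phase case is exactly right.

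There is, however, a genuine gap. You assert a uniform bound $\max_j\norm{M_j}=o(1)$, and you need it both to justify the polar Taylor expansion and to ensure every $U_j$ is well-conditioned (so that $\hat Z_j=\mathp(U_j)$ rather than the fallback $I_d$, and so that the Lipschitz constant of $\mathp$ at $U_j$ is controlled). But under the theorem's hypotheses---only $np/\log n>C_1$ and $np/\sigma^2>C_2$ for fixed constants---this uniform bound need not hold. A union bound over $j$ gives $\max_j\norm{M_j}\lesssim\sigma\sqrt{\log n/(np)}$, and there are admissible parameter sequences (e.g.\ $\sigma^2\asymp np/\log\log n$, $\log n\asymp np/\log\log n$) for which $\sigma^2\log n/(np)\to\infty$. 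For such $j$ the matrix $I_d+M_j$ may be singular or nearly so, and both the Taylor expansion and the perturbation bound for $\mathp$ break down.

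The paper handles this by the same two-case split as in the phase proof: for each $j$, either the three error pieces (\ref{eqn:od_new_proof_10})--(\ref{eqn:od_new_proof_11}) fall below thresholds $\gamma/(np)$, $\rho/(np)$, in which case Lemma \ref{lem:od_normalization} applies (the paper compares $\mathp(Q\tilde Q^\T+T_j)$ to $\mathp(Q\tilde Q^\T+(T_j+T_j^\T)/2)=I_d$ to isolate the antisymmetric part, rather than a Taylor expansion, but either would work on the good set), or else one bounds $\fnorm{\hat Z_j-Z_j^*\tilde Qb_2}^2$ trivially by $4d$. The total contribution of bad indices is controlled by Lemma \ref{lem:od_re_exponential}, which gives $\sum_j\indic{8\sigma\norm{\Xi_j}\geq\gamma np}\leq(\sigma^2/(\gamma^2 p))\exp(-\sqrt{\gamma^2 np/(16\sigma^2)})$; with the choice $\gamma^2=\sqrt{d^2\sigma^2/(np)}$ this is $o(\sigma^2/(np))$. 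This case split is not bookkeeping but the mechanism that makes the argument work without the unavailable uniform bound; your proposal should incorporate it.
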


We can compare the upper bound in Theorem \ref{thm:od}  to existing results for the $\matho(d)$ synchronization. \cite{gao2021optimal} derived an upper bound for the spectral method: $\ell(\hat Z,Z^*) \lesssim d^4(1+\sigma^2d)/(np)$ with high probability. In comparison, our upper bound has a smaller factor of $d(d-1)/2$  for $\sigma^2/np$. In addition, it does not have the $d^4/np$ error term. The paper \cite{gao2021optimal} also established the minimax lower bound: when $2\leq d\leq C_0$, there exist constants $C_4,C_5>0$ such that if $\frac{np}{\sigma^2}>C_4$, we have
\begin{align*}
\inf_{Z\in\matho(d)^n}\sup_{Z^*\in\matho(d)^n}\ellod(\hat Z,Z^*)\geq \br{1-C_5\br{\frac{1}{n} + \frac{\sigma^2}{np}}} \frac{d(d-1)\sigma^2}{2np}.
\end{align*}
Compared to the lower bound, the spectral estimator $\hat Z$ is exact minimax optimal as it achieves the correct rate with the correct constant $d(d-1)/2$ in front of the optimal rate $\sigma^2/np$.

\section{Discussions}\label{sec:discussions}

\subsection{Comparison of Spectral Method and Other Methods}\label{sec:comparison}

In synchronization problems, the spectral method offers computational advantages over alternative methods such as MLE, SDP, and GPM. According to Theorem \ref{thm:asymptotic}, the spectral method attains statistical optimality in the limit as \(\frac{np}{\sigma^2}\rightarrow\infty\), achieving the minimum possible risk. The performance of the spectral method in scenarios where \(\frac{np}{\sigma^2}\) does not approach infinity, however, remains less understood.

Previous studies \cite{javanmard2016phase, lelarge2019fundamental} have explored the PCA method in Bayesian settings for synchronization problems with \(p=1\). Unlike the spectral method, as defined in (\ref{eqn:spectral_estimator_def}), PCA does not involve entrywise normalization but scales the leading eigenvector \(u\) to minimize the mean square error (MSE). These studies offer a comprehensive asymptotic analysis of PCA's MSE and that of the Bayes-optimal estimator, demonstrating both methods' ability to achieve substantial accuracy when \(\sigma^2\) is below a specific threshold. However, PCA tends to exhibit a higher MSE compared to the Bayes-optimal estimator. Furthermore, \cite{javanmard2016phase} indicates that the MSE of SDP falls between that of PCA and the Bayes-optimal estimator, leaning more towards the latter.

While Theorem \ref{thm:asymptotic} addresses the regime where \(\frac{np}{\sigma^2}\rightarrow\infty\), Theorem \ref{thm:phase_main} establishes an upper bound in scenarios where \(\frac{np}{\sigma^2}\) exceeds a certain constant. This suggests a complex interplay between the performance of the spectral method and the ratio \(\frac{\sigma^2}{np}\) in the constant \(\frac{np}{\sigma^2}\) regime. To better understand this relationship, we conducted numerical experiments using the spectral method, GPM, and SDP under various \(\sigma^2\) levels. The GPM, initialized with the spectral estimator \(\hat z^{(0)}_\text{GPM}:=\hat z\), iteratively updates \(\hat z^{(t)}_\text{GPM}:=f(X \hat z^{(t-1)}_\text{GPM} )\) for \(t\geq 1\), where $f:\mathc^n\rightarrow \mathc_1^n$ is an entrywise normalization function defined as $[f(x)]_i := x_i/|x_i| \indic{x_i\neq 0} +\indic{x_i=0}$ for any $x\in\mathc^n$. The SDP, a convex optimization problem, maximizes $\max_{Z\in\mathc^{n\times n}:Z=Z^\H, \text{diag}(Z)=I_n,Z\succeq 0} \text{Tr}(XZ)$ over complex positive-semidefinite Hermitian matrices with unit diagonal entries and can be initialized using the spectral method. We assessed their performances using the normalized squared \(\ell_2\) loss (\ref{eqn:ell_def}).

\begin{figure}[ht]
\centering
\includegraphics[width=0.49\textwidth, trim= 0 15 25 55, clip]{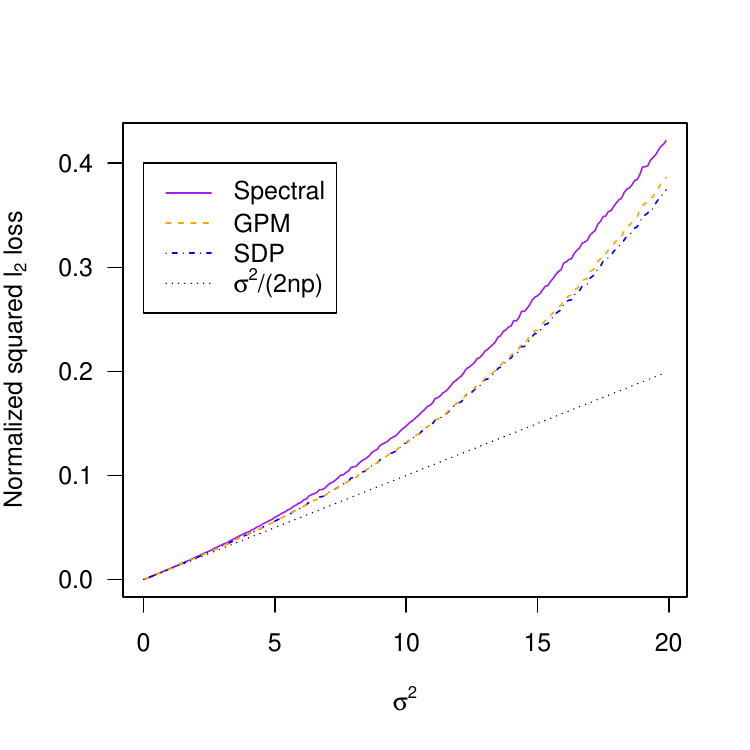}
\includegraphics[width=0.49\textwidth, trim= 0 15 25 55, clip]{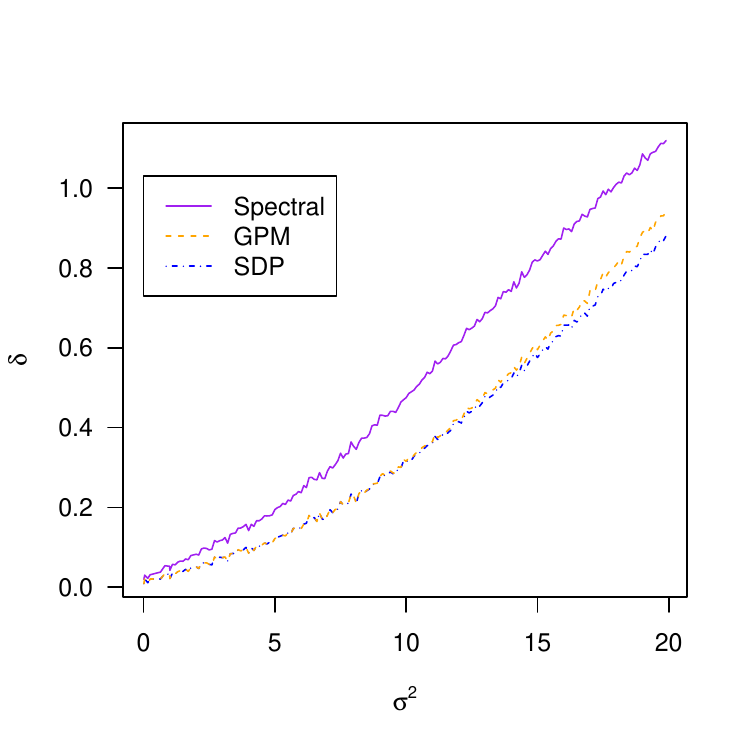}
\caption{\label{fig:1} Numerical results for the spectral method, GPM, and SDP in phase synchronization, with \(n=100\), \(p=0.5\) and \(\sigma^2\) varying within \([0,20]\). Left: Error comparison measured by the normalized squared \(\ell_2\) loss. Right: Comparison of the high-order term in their errors.}
\end{figure}

Figure \ref{fig:1} summarizes the comparative performances of these methods. For low \(\sigma^2\) values, the error rates of all methods approximate \(\frac{\sigma^2}{2np}\). The left panel of the figure shows that as \(\sigma^2\) increases, their error rates rise more steeply than \(\frac{\sigma^2}{2np}\). As \(\sigma^2\) continues to increase, the spectral method exhibits higher error rates, as expected, since the other two methods use the spectral method for initialization and enhance it through more complex procedures. For a deeper insight into the numerical performance differences, we compare the high-order terms in their errors. Specifically, the normalized squared \(\ell_2\) loss for each method can be expressed as \((1+\delta) \frac{\sigma^2}{2np}\), where \(\delta\) represents the high-order term. The right panel of Figure \ref{fig:1} compares \(\delta\) for these three methods. It reveals that even at small \(\sigma^2\) values, the spectral method's performance diverges from those of the other methods. This suggests that while \(\delta\) diminishes to 0 for all three methods as \(\sigma^2\) decreases (thus achieving exact minimax optimality), the spectral method's \(\delta\) diminishes more slowly than those of the other two methods.

Deriving explicit expressions for these error rates would be insightful, yet it falls outside the scope of this paper and presents an avenue for future research.

\subsection{Condition on $p$}\label{sec:condition_p}
In the phase synchronization problem (\ref{eqn:model}), observations are missing at random, forming an Erd\"{o}s-R\'enyi random graph $A$ with edge probability $p$. The value of $p$ cannot be excessively small, as this could result in $A$ being disconnected, thereby making accurate estimation of $z^*$ under a global phase impossible.
Theorem \ref{thm:asymptotic} assumes $\frac{np}{\log n}\rightarrow\infty$ to establish the exact minimax optimality of the spectral method. A less stringent condition, where $\frac{np}{\log n}$ exceeds a certain constant, is considered in Theorem \ref{thm:phase_main}. However, it is known that $A$ is connected with high probability when $\frac{np}{\log n}>1+\epsilon$ for any constant $\epsilon>0$. This raises the question of how the spectral method performs when $\frac{np}{\log n}$ is a small constant.

Our analysis requires  $\frac{np}{\log n}$ to be greater than a certain constant for several technical reasons. This condition ensures  desired bounds hold for critical quantities such as $\|A-\E A\|$ and $\|A\circ W\|$, which are essential for the $\ell_\infty$ analysis in Proposition \ref{prop:no_additive_noise} and the $\ell_2$ analysis of the first order approximation in Proposition \ref{prop:eigenvector_perturbation}. Moreover, the proof of Theorem \ref{thm:phase_main} leverages the $\ell_\infty$ results from Proposition \ref{prop:no_additive_noise}, leading to the inclusion of the $\sqrt{\frac{\log n}{np}}$ factor in the theorem's upper bound. This requires  $\frac{np}{\log n}$ to approach infinity for the upper bound to asymptotically match the exact minimax risk.
Obtaining precise bounds for the performance of the spectral method when $\frac{np}{\log n}$ is a small constant would require an extension beyond our current analytic framework, a task we leave for future research.

\subsection{Other Low-rank Problems}\label{sec:other_low_rank}

The synchronization problems investigated in this manuscript are part of a broader category of problems characterized by low-rank matrix structures disrupted by additive noise and incomplete data. The methodologies developed herein are applicable to a variety of related problems, such as matrix completion, principal component analysis, factor models,  mixture models, and ranking from pairwise comparison data.
A key observation is that many of these problems encompass multiple sources of randomness, such as that arising from missing data and additive noise. An effective approach, as demonstrated in this study, is to isolate these sources and evaluate their individual contributions to the overall estimation error. This strategy is exemplified in our analysis of synchronization problems, where we introduce a novel population eigenvector and eigenspace.
Furthermore, Lemma \ref{lem:eigenvector_perturbation} and Lemma \ref{lem:eigenspace_perturbation} offer a  general framework for the perturbation analysis of eigenvectors and eigenspaces. 

On the other hand, synchronization problems are special in that their leading eigenvector or eigenspace is spread out. In the literature \cite{chen2021spectral}, the coherence of an eigenvector \( u \)  is defined as \( \max_{i\in[n]}|u_i|^2/n \), where \( u_1, \ldots, u_n \) are its coordinates. In phase synchronization, the leading eigenvector of \( \E X \) in  (\ref{eqn:EX}) possesses uniformly equal magnitude \( 1/\sqrt{n} \), indicating maximal coherence. Contrastingly, in many low-rank problems, eigenvectors exhibit lower coherence, which naturally factors into theoretical analysis. Therefore, when extending the concepts and methodologies from this paper to other scenarios, it is crucial to monitor eigenvector coherence for more precise and insightful analysis.

\section{Proofs for Phase Synchronization}\label{sec:proof_phase} 

\subsection{Proof of Lemma \ref{lem:eigenvector_perturbation}}

We first present a variant of Davis-Kahan Theorem \cite{davis1970rotation} and an inequality about $\inf_{b\in\mathc_1}\norm{x - y b}$ and $\norm{(I_d-xx^\H)y}$ that will be used in the proof of Lemma \ref{lem:eigenvector_perturbation}.
\begin{lemma}\label{lem:davis}
Let $X,\tilde X\in\mathc^{d\times d}$ be two Hermitian matrices. Let $\lambda_1\geq \lambda_2\geq \ldots \geq \lambda_d$  be the eigenvalues of $X$. Consider any $r\in[d]$. Let $U\in\mathc^{d\times r}$ (resp. $\tilde U$) be the eigenspace of $X$ (resp. $\tilde X$) that includes its  leading $r$ eigenvectors.  Under the assumption that $\|{X-\tilde X}\|< (\lambda_r-\lambda_{r+1})/4$, we  have
\begin{align*}%
\norm{(I-UU^\H)\tilde U}
 \leq \frac{4\norm{X-\tilde X}}{3(\lambda_r - \lambda_{r+1})}.
\end{align*}
\end{lemma}

\begin{lemma}\label{lem:complex_unit_vector_inequality}
For any unit vectors $x,y\in\mathbb{C}^d$,
we have $\inf_{b\in\mathc_1}\norm{x - y b}\leq \sqrt{2}\norm{(I_d-xx^\H)y}$.
\end{lemma}

\begin{proof}[Proof of Lemma \ref{lem:eigenvector_perturbation}]
Denote $\mu_1\geq \ldots\geq \mu_n$ as the eigenvalues of $Y$. We first give some inequalities for the eigenvalues and $\|Yv^*\|$ that will be used later in the proof. By Weyl's inequality, we have
\begin{align*}
\max\cbr{\abs{\mu_1-\mu_1^*},\abs{\mu_2-\mu_2^*}}\leq \norm{Y-Y^*}.
\end{align*}
Since $\norm{Y-Y^*}\leq \min\{\mu_1^*-\mu_2^*,\mu_1^*\}/4$ is assumed, we have
\begin{align}\label{eqn:rev_4}
\frac{3}{4}\mu_1^*\leq \mu_1\leq \frac{5}{4}\mu_1^*,\quad\mu_1-\mu_2 \geq \frac{\mu_1^* -\mu_2^*}{2},
\end{align}
and
\begin{align}\label{eqn:rev_5}
\abs{\frac{\mu_1^*}{\mu_1} - 1}  = \frac{\abs{\mu_1^* -\mu_1}}{\mu_1} \leq \frac{\norm{Y-Y^*}}{\mu_1^* - \norm{Y-Y^*}} \leq  \frac{4\norm{Y-Y^*}}{3\mu_1^*}.
\end{align}
Regarding $\|Yv^*\|$, using   the decomposition
\begin{align*}
Y=Y^* + (Y-Y^*) = \mu_1^*v^*v^{*\H} + (Y^* - \mu_1^*v^*v^{*\H}) + (Y-Y^*),
\end{align*}
and its consequence
\begin{align}\label{eqn:proof_4}
Yv^* = Y^* v^* + (Y-Y^*)v^* =\mu_1^*v^* + (Y-Y^*)v^*,
\end{align}
we have 
\begin{align}\label{eqn:rev_2}
\norm{Yv^*}\geq \mu_1^* - \norm{Y-Y^*}\geq \frac{3\mu_1^*}{4}.
\end{align}

We define $\check v\in\mathc^n$ and $\tilde v\in\maths_n$ as
\begin{align}\label{eqn:check_v_def}
\check v&:= \frac{Y v^*}{\mu_1},\\
\tilde v  &:=\frac{Yv^*}{\norm{Yv^*}}.\label{eqn:tilde_v_def}
\end{align}
Then $\tilde v$ is the first-order approximation of $v$, written equivalently as $\tilde v={\check v}/{\norm{\check v}}$. Note that with $\norm{Yv^*}>0$ as shown in (\ref{eqn:rev_2}),  $\tilde v$ is well-defined.

Since $v$ is the eigenvector of $Y$ corresponding to $\mu_1$, we have
\begin{align*}
\mu_1 v &=Yv,\\
\mu_1 \tilde v &= Yv^*/\norm{\check v}.
\end{align*}
Subtracting the second equation from the first one,  we have
\begin{align*}
\mu_1 (v - \tilde v) = Y\br{v - \frac{v^*}{\norm{\check v}}} = Y(v-\tilde v) + Y\br{\tilde v - \frac{v^*}{\norm{\check v}}} =  Y(v-\tilde v)  + \frac{1}{\norm{\check v}} Y(\check v - v^*).
\end{align*}
After rearranging, we have
\begin{align}\label{eqn:proof_1}
\norm{\check v}(\mu_1 I_n - Y)(v - \tilde v ) =  Y(\check v - v^*).
\end{align}
Since $(\mu_1 I_n - Y)v=0$, we have $\text{span}(\mu_1 I_n-Y)$ being orthogonal to $v$.  As a result,  $\norm{\check v}(\mu_1 I_n- Y)(v - \tilde v ) = \norm{\check v}(\mu_1 I_n- Y)(I_n - vv^\H) \tilde v $.
In addition, since the left-hand side of (\ref{eqn:proof_1}) belongs to $\text{span}(I_n-vv^\H)$, its right-hand side must also  belong to  $\text{span}(I_n-vv^\H)$. That is, $Y(\check v - v^*) =(I_n-vv^\H) Y(\check v - v^*)$.
Then (\ref{eqn:proof_1}) leads to
\begin{align}\label{eqn:proof_2}
 \norm{\check v}(\mu_1 I_n- Y)(I_n - vv^\H) \tilde v = (I_n-vv^\H) Y(\check v - v^*).
\end{align}

Observe that  $0\leq \mu_1-\mu_2\leq\ldots \leq \mu_1-\mu_n$ are the eigenvalues of $\mu_1 I_n-Y$. In particular, the eigenvector corresponding to 0 is $v$.  Since $(I_n-vv^\H)\tilde v$ is orthogonal to $v$, from (\ref{eqn:proof_2}) we have
\begin{align*}
  \norm{\check v} (\mu_1-\mu_2)\norm{(I_n-vv^\H)\tilde v} \leq \norm{\check v}  \norm{(\mu_1 I_n- Y)(I_n-vv^\H)\tilde v} = \norm{ (I_n-vv^\H)Y(\check v - v^*)} .
\end{align*}
Hence,
\begin{align}\label{eqn:proof_3}
\norm{(I_n-vv^\H)\tilde v} \leq  \frac{1}{ \norm{\check v} (\mu_1-\mu_2)} \norm{ (I_n-vv^\H)Y(\check v - v^*)} .
\end{align}
From Lemma \ref{lem:complex_unit_vector_inequality}, we have $\inf_{b\in\mathc_1}\norm{v - \tilde v b}\leq \sqrt{2}\norm{(I_n-vv^\H)\tilde v}$.
With this, (\ref{eqn:proof_3}) leads to
\begin{align}\label{eqn:proof_3_1}
\inf_{b\in\mathc_1}\norm{v - \tilde v b} \leq  \frac{\sqrt{2}}{ \norm{\check v} (\mu_1-\mu_2)} \norm{ (I_n-vv^\H)Y(\check v - v^*)} .
\end{align}

In the following, we are going to analyze ${ (I_n-vv^\H)Y(\check v - v^*)}$.
We have
\begin{align*}
&(I_n-vv^\H)Y(\check v - v^*) \\
&=(I_n-vv^\H)Y \br{\frac{Yv^*}{\mu_1} - v^*} \\
&=(I_n-vv^\H)Y  \br{\frac{\mu^*_1}{\mu_1}-1}v^* +  \frac{1}{\mu_1} (I_n-vv^\H)Y\br{Y-Y^*}v^*\\
&= \br{\frac{\mu^*_1}{\mu_1}-1}(I_n-vv^\H)\mu^*_1v^* +  \br{\frac{\mu^*_1}{\mu_1}-1}(I_n-vv^\H)(Y-Y^*)v^* \\
&\quad + \frac{1}{\mu_1} (I_n-vv^\H) \mu_1^* v^*v^{*\H}\br{Y-Y^*}v^* + \frac{1}{\mu_1} (I_n-vv^\H) (Y^*-\mu_1^* v^*v^{*\H})\br{Y-Y^*}v^*  \\
&\quad +  \frac{1}{\mu_1} (I_n-vv^\H) (Y - Y^*)\br{Y-Y^*}v^* \\
&=  \br{ \br{\frac{\mu^*_1}{\mu_1}-1} + \frac{1}{\mu_1}v^{*\H}\br{Y-Y^*}v^*  }\mu^*_1(I_n-vv^\H)v^*  \\
&\quad +   \br{\frac{\mu^*_1}{\mu_1}-1}(I_n-vv^\H)(Y-Y^*)v^* +\frac{1}{\mu_1} (I_n-vv^\H) (Y^* - \mu_1^*v^*v^{*\H})\br{Y-Y^*}v^* \\
&\quad+  \frac{1}{\mu_1} (I_n-vv^\H) (Y - Y^*)\br{Y-Y^*}v^*.
\end{align*}
Hence,
\begin{align*}
&\norm{(I_n-vv^\H)Y(\check v - v^*) }\\
&\leq \br{\abs{\frac{\mu^*_1}{\mu_1}-1} + \frac{\abs{v^{*\H}\br{Y-Y^*}v^*}}{\mu_1}}\mu_1^*\norm{(I_n-vv^\H)v^*} + \abs{\frac{\mu^*_1}{\mu_1}-1} \norm{Y-Y^*}\\
&\quad + \frac{\norm{Y^* - \mu_1^*v^*v^{*\H}}\norm{Y-Y^*}}{\mu_1} + \frac{\norm{Y-Y^*}^2}{\mu_1}\\
&\leq \br{\abs{\frac{\mu^*_1}{\mu_1}-1} + \frac{\norm{Y-Y^*}}{\mu_1}}\mu_1^*\norm{(I_n-vv^\H)v^*} + \abs{\frac{\mu^*_1}{\mu_1}-1} \norm{Y-Y^*}\\
&\quad + \frac{|\mu_2^*|\norm{Y-Y^*}}{\mu_1} + \frac{\norm{Y-Y^*}^2}{\mu_1},
\end{align*}
where we use the fact that $\norm{I_n-vv^\H}=1$ and $\norm{Y^* - \mu_1^*v^*v^{*\H}} =\max\{|\mu_2^*|,|\mu_n^*|\}$. Then together with (\ref{eqn:proof_3_1}), we have
\begin{align*}
\inf_{b\in\mathc_1}\norm{v - \tilde v b} &\leq  \frac{\sqrt{2}}{ \norm{\check v} (\mu_1-\mu_2)}  \Bigg(\br{\abs{\frac{\mu^*_1}{\mu_1}-1} + \frac{\norm{Y-Y^*}}{\mu_1}}\mu_1^*\norm{(I_n-vv^\H)v^*} \\
&\quad + \abs{\frac{\mu^*_1}{\mu_1}-1} \norm{Y-Y^*}+ \frac{\max\{|\mu_2^*|,|\mu_n^*|\}\norm{Y-Y^*}}{\mu_1} + \frac{\norm{Y-Y^*}^2}{\mu_1}\Bigg).
\end{align*}

In the rest of the proof, we are going to simplify the display above. 
From (\ref{eqn:rev_4}) and  (\ref{eqn:rev_2}), we have
\begin{align*}
\norm{\check v} = \frac{\norm{Yv^*}}{\mu_1}\geq \frac{3}{5}. 
\end{align*}
Using Lemma \ref{lem:davis} and the assumption $\norm{Y - Y^*} \leq (\mu_1^* -\mu_2^*)/4$,
we have
\begin{align*}
\norm{(I_n-vv^\H)v^*} \leq 
\frac{2\norm{Y - Y^*} }{\mu_1^* -\mu_2^*}.
\end{align*}
With the above results, together with (\ref{eqn:rev_4}) and (\ref{eqn:rev_5}),  we have
\begin{align*}
&\inf_{b\in\mathc_1}\norm{v - \tilde v b}\\
 &\leq \frac{\sqrt{2}}{\frac{3}{5} \frac{\mu_1^*-\mu_2^*}{2}} \Bigg(\br{\frac{4\norm{Y-Y^*}}{3\mu_1^*} + \frac{\norm{Y-Y^*}}{\frac{3}{4}\mu_1^*}} \mu_1^* \frac{2\norm{Y - Y^*} }{\mu_1^*-\mu_2^*} + \frac{4\norm{Y-Y^*}}{3\mu_1^*}\norm{Y - Y^*}\\
&\quad + \frac{\max\{|\mu_2^*|,|\mu_n^*|\}\norm{Y-Y^*}}{\frac{3}{4}\mu_1^*}  + \frac{\norm{Y-Y^*}^2}{\frac{3}{4}\mu_1^*}\Bigg)\\
&=\frac{10\sqrt{2}}{3(\mu_1^*-\mu_2^*)}\br{\br{\frac{16}{3(\mu_1^{*}-\mu_2^*)}  + \frac{8}{3\mu_1^{*}} } \norm{Y-Y^*}^2+ \frac{4\max\{|\mu_2^*|,|\mu_n^*|\} }{3\mu_1^*}\norm{Y-Y^*}}\\
&\leq \frac{40\sqrt{2}}{9(\mu_1^*-\mu_2^*)}\br{\br{\frac{4}{(\mu_1^{*}-\mu_2^*)}  + \frac{2}{\mu_1^{*}} } \norm{Y-Y^*}^2+ \frac{\max\{|\mu_2^*|,|\mu_n^*|\} }{\mu_1^*}\norm{Y-Y^*}}.
\end{align*}
\end{proof}

\subsection{Proofs of Lemma \ref{lem:no_additive_noise}, Proposition \ref{prop:no_additive_noise}, and Proposition \ref{prop:eigenvector_perturbation}}

\begin{proof}[Proof of Lemma \ref{lem:no_additive_noise} ]
Denote $\lambda'$ as an eigenvalue of $A$ with its corresponding eigenvector $u'$. Then  we have $Au' =\lambda' u'$.  This can be equivalently written as
\begin{align*}
\sum_{k\neq j}A_{jk} u'_k = \lambda' u'_j,\forall j\in[n].
\end{align*}
Multiplying by ${z^*_j}$ on both sides, we have
\begin{align*}
\sum_{k\neq j}A_{jk} z^*_ju'_k  = \sum_{k\neq j}A_{jk} z^*_j \overline{z^*_k} (z^*_ku'_k) =  \lambda'z^*_j  u'_j,\forall j\in[n].
\end{align*}
That is, $(A\circ z^*z^{*\H})  (z^* \circ u') =\lambda'(z^* \circ u')$. Hence, $\lambda'$ is an eigenvalue of $A\circ z^*z^{*\H}$ with the corresponding eigenvector $z^* \circ u'$.

By the same argument, we can show each eigenvalue of $A\circ z^*z^{*\H}$ is also an eigenvalue of $A$. As a result, since $\check u$ is the leading eigenvector of $A$, $z^*\circ \check u$ is the leading eigenvector of $A\circ z^*z^{*\H}$.
\end{proof}

Before proving Proposition \ref{prop:no_additive_noise} and Proposition \ref{prop:eigenvector_perturbation}, we first state some technical lemmas related to $A$ and $W$.

\begin{lemma}\label{lem:A_related}
The largest eigenvalue of $\E A$ is $(n-1)p$ and the corresponding eigenvector is $\one_n/\sqrt{n}$. The remaining eigenvalues of $\E A$ are $-p$ with multiplicity $n-1$.  Denote $\lambda'\geq \lambda'_2\geq \ldots\geq \lambda'_n$ as the eigenvalues of $A$. 
We have
\begin{align}\label{eqn:rev_9}
&|\lambda' - (n-1)p|, \max_{2\leq j\leq n}|\lambda'_ j+p|\leq \norm{A-\E A}, \text{ and } \lambda'-\lambda'_2 \geq np-2\norm{A-\E A}.
\end{align}
\end{lemma}

\begin{lemma}\label{lem:combine}
There exist constants $C_1,C_2>0$ such that if $\frac{np}{\log n}>C_1$, then we have
\begin{align*}
&\norm{A-\E A}\leq C_2\sqrt{np},\\
&\norm{A\circ W}\leq C_2\sqrt{np},\\
&\sum_{j\in[n]}\abs{\im\br{{\sum_{k\neq j} A_{jk}W_{jk} \overline{ z_j^* } z^*_k }}}^2\leq \frac{n^2p}{2}\br{1+C_2\sqrt{\frac{\log n}{n}}},
\end{align*}
with probability at least $1-3n^{-10}$.
\end{lemma}

The first part of Proposition \ref{prop:no_additive_noise} (i.e., (\ref{eqn:no_additive_noise})) can be proved using Theorem 2.1 of \cite{abbe2020entrywise} which we include below for completeness. The statement of Theorem 2.1 in \cite{abbe2020entrywise} is complicated as the theorem works for perturbation of eigenspaces. However, what we need to consider here is only the perturbation of the leading eigenvector. For easier reference, we present below a simpler version of the theorem.

\begin{lemma}[A simpler version of Theorem 2.1 of \cite{abbe2020entrywise}]\label{lem:abbe}
Consider two symmetric matrices $Y,Y^*\in \mathr^{n\times n}$. Let  the eigenvalues of $Y^*$ be $\mu_1^* \geq \mu_2^*\geq \ldots \geq \mu_n^*$. Define $\Delta^*:=\min\{\mu^*_1 -\mu^*_2,\mu^*_1\}$ and $\kappa:=\max\{|\mu^*_1|,|\mu^*_n|\}/\Delta^*$. Let  the leading eigenvector of $Y$ (resp. $Y^*$) be $v$ (resp. $v^*$).
Assume the following conditions are satisfied for some $\gamma\geq 0$ and some function $\phi:[0,+\infty)\rightarrow[0,+\infty)$:
\begin{enumerate}
\item %
$\norm{Y^*}_{2\rightarrow\infty}\leq \gamma\Delta^*$.
\item For any $m\in[n]$, $\{Y_{jk}: j=m\text{ or }k=m\}$ are independent of $\{Y_{jk}: j\neq m,k\neq m\}$.
\item $32\kappa \max\{\gamma, \phi(\gamma)\}\leq 1$ and for some $\delta_0\in(0,1)$, $\pbr{\norm{Y-Y^*}\leq \gamma \Delta^*} \geq 1-\delta_0$.
\item Suppose $\phi(x)$ is continuous and non-decreasing in $[0,+\infty)$ with $\phi(0)=0$, $\phi(x)/x$ is non-increasing in $[0,+\infty)$, and $\delta_1\in(0,1)$. For any $m\in[n]$ and $w\in\mathr^n$,
\begin{align*}
\pbr{\abs{[Y-Y^*]_{m\cdot}w} \leq \Delta^* \norm{w}_\infty \phi\br{\frac{\norm{w}}{\sqrt{n} \norm{w}_\infty}}} \geq 1- \frac{\delta_1}{n}.
\end{align*}
\end{enumerate}
Then with probability at least $1-\delta_0-2\delta_1$, there exists some constant $C>0$ and some $b\in\{-1,1\}$ such that
\begin{align*}
\norm{vb-Yv^*/\mu_1^*}_\infty &\leq C\br{\kappa(\kappa + \phi(1))(\gamma + \phi(\gamma)) \norm{v^*}_\infty + \gamma \norm{Y^*}_{2\rightarrow\infty}/\Delta^*}. 
\end{align*}
\end{lemma}

The following Lemma \ref{lem:bern} provides two Bernstein-type concentration inequalities to be used in the proof of Proposition \ref{prop:no_additive_noise}. The first one is the classical Bernstein inequality; see Section 2.8 of \cite{boucheron2003concentration} for its proof. The second one is proved in Lemma 7 of \cite{abbe2020entrywise}.

\begin{lemma}\label{lem:bern}
Let $B_1,\ldots,B_n$ be real independent random variables such that $\max_{j\in[n]}\abs{B_j}\leq M$ for some $M>0$. Then
\begin{align*}
\pbr{\abs{\sum_{j\in[n]} (B_j - \E B_j)} \geq t  }\leq 2\ebr{- \frac{\frac{1}{2}t^2}{ \sum_{j\in[n]}\E(B_j - \E B_j)^2 + \frac{1}{3} Mt}}.
\end{align*}
Let $w\in\mathr^n$ be a fixed vector and $\alpha\geq 0$. If $\{B_j\}_{j\in[n]}\iid \text{Bernoulli}(p)$, we have
\begin{align*}
\pbr{\abs{\sum_{j\in[n]} w_j(B_j - p)}\geq \frac{(2+\alpha)np}{1\vee \log\br{\frac{\sqrt{n}\norm{w}_{\infty}}{\norm{w}}}}\norm{w}_\infty} \leq 2\exp(-\alpha np).
\end{align*}
\end{lemma}

\begin{proof}[Proof of Proposition \ref{prop:no_additive_noise}]
We use Lemma \ref{lem:abbe} to prove the first part of the proposition. Denote $\mu_1^* \geq \mu_2^*\geq \ldots \geq \mu_n^*$ as the eigenvalues of $\E A$. Define $\Delta^*$ and $\kappa$ the same as in Lemma \ref{lem:abbe}. From Lemma \ref{lem:A_related}, we have $\Delta^* = (n-1)p$, $\kappa =1$, with  $\one_n/\sqrt{n}$ being the leading eigenvector of $\E A$.  Since $\E A =pJ_n - pI_n$, we have $\|\E A\|_{2\rightarrow\infty} =\sqrt{(n-1)}p$.  By Lemma \ref{lem:combine}, there exist constants $c_1,c_2>1$ such that if $\frac{np}{\log n}>c_1$, then $\norm{A-\E A}\leq c_2\sqrt{np}$ with probability at least $1-3n^{-10}$. Define $\gamma:= 2c_2/\sqrt{np}$, $\delta_0:=2n^{-10}$, and $\phi(x) := 3(1\vee \log (x^{-1}))^{-1}$. Then the first assumption of Lemma \ref{lem:abbe} is satisfied as long as $c_2\geq 1$.
When $\frac{np}{\log n }$ is greater than some sufficiently large constant, we have $\phi(\gamma)\leq 8/\log (np)$, and the  third assumption  is satisfied. We can also verify that the second assumption is also satisfied. For any $m\in[n]$ and any $w\in\mathr^n$, since $[A-\E A]_{m\cdot} w$ is a weighted average of centered Bernoulli random variables, 
the second inequality of Lemma \ref{lem:bern} can be applied to have
\begin{align*}
&\pbr{\abs{[A-\E A]_{j\cdot} w}>\Delta^* \|w\|_{\infty} \phi\br{\frac{\norm{w}}{\sqrt{n} \norm{w}_\infty}}} \\
&\leq \pbr{\abs{[A-\E A]_{j\cdot} w}\geq \frac{2.5np}{1\vee \log\br{\frac{\sqrt{n}\norm{w}_{\infty}}{\norm{w}}}}\norm{w}_\infty} \leq 2n^{-11},
\end{align*}
when $\frac{np}{\log n}\geq 11$ is greater than some sufficiently large constant.
Define $\delta_1:=2n^{-10}$. Then the last assumption of Lemma \ref{lem:abbe} is  satisfied. Then Lemma \ref{lem:abbe} leads to the conclusion that with probability at least $1-6n^{-10}$, there exists some constant $c_1>0$ and some $b\in\{-1,1\}$ such that
\begin{align*}
\norm{\check u b- \frac{1}{\mu_1^*\sqrt{n}}A\one_n}_\infty &\leq c_1\br{\kappa(\kappa + \phi(1))(\gamma + \phi(\gamma)) \norm{\frac{1}{\sqrt{n}}\one_n}_\infty + \gamma \frac{\norm{\E A}_{2\rightarrow\infty}}{\Delta^*}} \\
&\leq c_1\br{(1+3)\br{\frac{2c_2}{\sqrt{np}} + \frac{8}{\log (np)}}\frac{1}{\sqrt{n}} + \frac{2c_2}{\sqrt{np}} \frac{\sqrt{(n-1)}p}{(n-1)p}} \\
&\leq \frac{c_2}{\log (np)} \frac{1}{\sqrt{n}},
\end{align*}
for some constant $c_2>0$. Note that
\begin{align*}
 \frac{1}{\mu_1^*\sqrt{n}}A\one_n =  \frac{1}{\mu_1^*\sqrt{n}}\E A\one_n +  \frac{1}{\mu_1^*\sqrt{n}}(A-\E A)\one_n = \frac{1}{\sqrt{n}} \one_n +  \frac{1}{(n-1)p\sqrt{n}} (A-\E A)\one_n.
\end{align*}
Then we have
\begin{align*}
\norm{\check u b- \frac{1}{\sqrt{n}} \one_n}_\infty \leq  \frac{c_2}{\log (np)} \frac{1}{\sqrt{n}} + \frac{1}{(n-1)p} \norm{\frac{1}{\sqrt{n}}(A-\E A)\one_n }_\infty.
\end{align*}
For any $m\in[n]$, by the first inequality of Lemma \ref{lem:bern}, there exists some constant $c_3>0$ such that 
\begin{align*}
\pbr{\abs{ [A-\E A]_{m\cdot}\one_n } \geq c_3 \sqrt{np \log n} } &\leq 2\ebr{-\frac{\frac{c_3^2}{2}np\log n}{(n-1)p(1-p) + \frac{c_3}{3} \sqrt{np\log n}}}\\
& \leq 2n^{-11}.
\end{align*}
Together with a union bound, we have $\pbr{\|(A-\E A)\one_n\| \geq c_3 \sqrt{np \log n} } \leq 2n^{-10}$. Hence,
\begin{align*}
\norm{\check u b- \frac{1}{\sqrt{n}} \one_n}_\infty \leq  \frac{c_2}{\log (np)} \frac{1}{\sqrt{n}} + \frac{1}{(n-1)p} \frac{c_3\sqrt{np \log n}}{\sqrt{n}} \leq c_4 \br{\sqrt{\frac{\log n}{np}} + \frac{1}{\log (np)}} \frac{1}{\sqrt{n}},
\end{align*}
for some constant $c_4>0$ with probability at least $1-8n^{-10}$.

The second part of the proposition is an immediate consequence of the first part. 
If $\frac{np}{\log n}> \max\cbr{C_1, 2C_2^2}$,
all the coordinates of $\check u$ have the same sign according to (\ref{eqn:no_additive_noise}). From Lemma \ref{lem:no_additive_noise}, we have $u =u^*$ as $u^*$ is the leading eigenvector of $A\circ z^*z^{*\H}$.
 If  $\{\check u_j\}_{j\in[n]}$ are all positive, we have $$\hat z_j = u_j^*/|{u_j^*}|=z^*_j \check u_j  / \check u_j  = z^*_j ,$$
 for each $j\in[n]$. That is, $\hat z=z^*$. If  $\{\check u_j\}_{j\in[n]}$ are all negative, we then have $\hat z=-z^*$.
\end{proof}

\begin{proof}[Proof of Proposition \ref{prop:eigenvector_perturbation}]
Recall $\lambda^*$ is the largest eigenvalue of $A\circ z^*z^{*\H}$. From Lemma \ref{lem:no_additive_noise}, $u^*$ is the corresponding eigenvector. 
Denote $\lambda^*_2\geq \ldots \geq \lambda^*_n$ as its remaining eigenvalues.  By Lemma \ref{lem:combine}, there exist constants $c_1,c_2>0$ such that when $\frac{np}{\log n}>c_1$, we have $\norm{A-\E A}\leq c_2\sqrt{np}$ and $\norm{A\circ W}\leq c_2\sqrt{np}$ with probability at least $1-3n^{-10}$. By Lemma \ref{lem:no_additive_noise} and Lemma \ref{lem:A_related}, we have $\lambda^* \geq (n-1)p - c_2\sqrt{np}$, $\max\{|\lambda^*_2|,|\lambda^*_n|\}\leq p + c_2\sqrt{np}$, and  $\lambda^* -\lambda_2^*\geq np - 2c_2\sqrt{np}$. When $\frac{np}{\log n}$ and $\frac{np}{\sigma^2}$ are greater than some sufficiently large constant, we have %
$4\sigma \norm{A\circ W} \leq np/2 \leq  \min\{\lambda^*_1,\lambda^* -\lambda_2^*\}$ satisfied. Since $X - A\circ z^*z^{*\H} = \sigma A \circ W$,  a direct application of Lemma \ref{lem:eigenvector_perturbation} leads to
\begin{align*}
&\inf_{b\in\mathc_1} \norm{u-\tilde ub}\\ 
&\leq \frac{40\sqrt{2}}{9(\lambda^* -\lambda_2^*)} \br{\br{\frac{4}{\lambda^* -\lambda_2^*}+\frac{2}{\lambda^{*}}}\sigma^2 \norm{A\circ W}^2 + \frac{\max\{|\lambda^*_2|,|\lambda^*_n|\}\sigma\norm{A\circ W}}{\lambda^*}}\\
&\leq   \frac{40\sqrt{2}}{9np/2} \br{\br{\frac{4}{np/2}+\frac{2}{np/2}}c_2^2\sigma^2 np + \frac{(p+c_2\sqrt{np})c_2\sigma\sqrt{np}}{np/2}}\\
&\leq c_3\frac{\sigma^2 + \sigma}{np},
\end{align*}
for some constant $c_3>0$.
\end{proof}

\subsection{Proof of  Theorem \ref{thm:phase_main}}

We first state some technical lemmas that will be used in the proof of Theorem \ref{thm:phase_main}.
\begin{lemma}\label{lem:re_exponential}
There exists some constant $C_1>0$ such that for any $\gamma$ satisfying $\frac{\gamma^2 np}{\sigma^2}\geq C_1$, we have
\begin{align*}
\sum_{j\in[n]}\indic{\frac{2\sigma}{np}  \abs{ {{\sum_{k\neq j} A_{jk}W_{jk} \overline{ z_j^* } z^*_k }} } \geq\gamma} \leq \frac{4\sigma^2}{\gamma^2 p}\ebr{-\frac{1}{16}\sqrt{\frac{\gamma^2 np}{\sigma^2}}},
\end{align*}
holds with probability at least $1-\ebr{-\frac{1}{32}\sqrt{\frac{\gamma^2 np}{\sigma^2}}}$.
\end{lemma}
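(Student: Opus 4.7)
Let $\tau := \sqrt{\gamma^2 np/\sigma^2}$ and $t := \gamma np/(2\sigma)$, so the indicator in the claim is $\indic{\abs{\xi_j} \ge t}$ where $\xi_j := \sum_{k\ne j} A_{jk}W_{jk}\overline{z_j^*}z_k^*$. The plan is to control the count $N := \sum_{j\in[n]}\indic{\abs{\xi_j}\ge t}$ by a one-sided Markov argument, after conditioning on $A$ and exploiting the complex-Gaussian conditional distribution of $\xi_j$.

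First I would establish that, conditionally on $A$, each $\xi_j$ is a centered complex Gaussian with variance $S_j := \sum_{k\ne j} A_{jk}$. This follows because $(W_{jk}\overline{z_j^*}z_k^*)_{k\ne j}$ are i.i.d.\ $\mathcal{CN}(0,1)$ independently of $A$ (rotation by the unit-modulus phases $\overline{z_j^*}z_k^*$ preserves $\mathcal{CN}(0,1)$), so $\pbr{\abs{\xi_j}\ge t \mid A} = e^{-t^2/S_j}$ when $S_j>0$, and the event has conditional probability zero when $S_j=0$. Next I would invoke Lemma \ref{lem:operator_norm_W}, which provides $\max_{j\in[n]}|S_j - np|\le C\sqrt{np\log n}$ on an event $\mathcal{E}$ of probability at least $1-n^{-10}$. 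For $np/\log n$ sufficiently large this yields $S_j\le 2np$ uniformly in $j$, hence on $\mathcal{E}$ the conditional tail becomes $\pbr{\abs{\xi_j}\ge t \mid A}\le e^{-t^2/(2np)} = e^{-\tau^2/8}$, using the arithmetic $t^2/(2np) = \gamma^2 np/(8\sigma^2) = \tau^2/8$. Summing over $j$ produces $\E[N\indic{\mathcal{E}}]\le n e^{-\tau^2/8}$.

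Now comes the key step: Markov's inequality with the tuned threshold $M := (4\sigma^2/(\gamma^2 p))e^{-\tau/16} = (4n/\tau^2)e^{-\tau/16}$. This choice is exactly what arbitrages the true ``$\tau^2$''-rate Gaussian tail against the claimed ``$\tau$''-rate statement, and recognizing it is the main subtle point in the proof. Markov gives $\pbr{N\ge M,\,\mathcal{E}}\le n e^{-\tau^2/8}/M = (\tau^2/4)e^{-\tau^2/8 + \tau/16}$, and a direct calculation shows that for $\tau\ge C_1$ with $C_1$ large enough one has $\tau^2/8 - 3\tau/32 \ge \log(\tau^2/2)$, so the bound is at most $\tfrac{1}{2}e^{-\tau/32}$. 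Finally, combining with $\pbr{\mathcal{E}^c}\le n^{-10}$: in the regime $\tau\le O(\log n)$ one has $n^{-10}\le \tfrac{1}{2}e^{-\tau/32}$, and for the complementary regime of very large $\tau$ I would replace the crude $n^{-10}$ with a Bernstein tail $\pbr{\mathcal{E}^c}\le ne^{-cnp}\le e^{-\tau/32}$, which is available because $np$ dominates $\tau$ whenever $\tau$ exceeds $\log n$. Adding the two pieces yields the stated $1-e^{-\tau/32}$ probability guarantee.

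The bulk of the difficulty is not in any single estimate but in the delicate bookkeeping of the Markov step: the Gaussian tail is genuinely sub-Gaussian in $\tau$, yet the lemma states only a sub-exponential bound, and one must choose $M$ so that the residual after Markov matches the target exponent $-\tau/32$ rather than $-\tau^2/\text{const}$. Everything else — the conditional-Gaussian reduction, the row-sum concentration for $A$, and the indicator summation — is routine once this trade-off is set up correctly.
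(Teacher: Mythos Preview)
The paper does not give its own argument here; it simply cites equation (31) of \cite{gao2021exact}. Your outline---condition on $A$ so that each $\xi_j\mid A\sim\mathcal{CN}(0,S_j)$ with $S_j=\sum_{k\neq j}A_{jk}$, bound the Gaussian tail on the degree event $\{\max_j S_j\le 2np\}$, then apply Markov's inequality with the threshold $M=(4n/\tau^2)e^{-\tau/16}$---is the natural route and almost certainly the argument in the cited source. Your identification of the key step, namely that the genuinely sub-Gaussian tail $e^{-\tau^2/8}$ must be traded down to the sub-exponential target $e^{-\tau/32}$ via this specific choice of $M$, is exactly right.

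There is, however, a real loose end in your final step. The assertion that ``$np$ dominates $\tau$ whenever $\tau$ exceeds $\log n$'' has no justification and is false as stated: since $\tau=\gamma\sqrt{np}/\sigma$ and the lemma places no upper bound on $\gamma$ nor lower bound on $\sigma$, $\tau$ can be arbitrarily large relative to $np$, and then $ne^{-cnp}\le e^{-\tau/32}$ simply fails. One clean repair is to avoid the single cut at $S_j\le 2np$ and instead layer the degree tail: combine the Chernoff bound $\pbr{S_j>knp}\le e^{-cknp}$ (valid for integer $k\ge 2$ with $c$ growing in $k$) with the conditional Gaussian bound $e^{-t^2/((k+1)np)}=e^{-\tau^2/(4(k+1))}$ and sum over $k$. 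Optimizing the exponent $cknp+\tau^2/(4(k+1))$ over $k$ yields an effective rate of order $\tau\sqrt{np}$ rather than $np$, and $\tau\sqrt{np}$ does dominate $\tau/32$ once $np$ exceeds a constant. With this refinement the rest of your argument goes through unchanged.
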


\begin{lemma}[Lemma 10 and Lemma 11 of  \cite{gao2021exact}]\label{lem:x_normalize_y_diff}
For any $x\in\mathbb{C}$ such that $\re(x)>0$, $\left|\frac{x}{|x|}-1\right|\leq \left|\frac{\im(x)}{\re(x)}\right|$. For any $x\in\mathc\setminus\{0\}$ and any $y\in\mathc_1$, we have $\abs{\frac{x}{\abs{x}}-y}\leq 2\abs{x-y}$. 
\end{lemma}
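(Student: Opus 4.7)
The plan is to prove each of the two elementary inequalities directly, without invoking any tool beyond the triangle inequality and a one-variable trigonometric manipulation.

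For the first inequality, I would parameterize $x$ by its modulus and argument: write $x = re^{i\theta}$ with $r = |x| > 0$, so that the hypothesis $\re(x) > 0$ is equivalent to $\cos\theta > 0$, i.e.\ $\theta \in (-\pi/2,\pi/2)$. Since $x/|x| = e^{i\theta}$, a direct expansion gives
\begin{align*}
\abs{\frac{x}{|x|} - 1}^2 = (\cos\theta - 1)^2 + \sin^2\theta = 2(1 - \cos\theta),
\end{align*}
while $\im(x)/\re(x) = \tan\theta$. Hence the claim reduces to the trigonometric inequality $2(1-\cos\theta) \leq \tan^2\theta$ on $(-\pi/2,\pi/2)$. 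Multiplying both sides by $\cos^2\theta > 0$ and using $\sin^2\theta = (1-\cos\theta)(1+\cos\theta)$ turns this into $2\cos^2\theta(1-\cos\theta) \leq (1-\cos\theta)(1+\cos\theta)$; canceling the nonnegative factor $(1-\cos\theta)$ (the case $\cos\theta = 1$ being trivial since both sides are $0$) leaves $(2\cos\theta + 1)(\cos\theta - 1) \leq 0$, which is immediate on $(0,1]$.

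For the second inequality, I would use the triangle inequality once and the reverse triangle inequality once. First,
\begin{align*}
\abs{\frac{x}{|x|} - y} \leq \abs{\frac{x}{|x|} - x} + |x - y|.
\end{align*}
The first term equals $\bigl|1 - |x|\bigr|$, because $x/|x|$ and $x$ are positive real scalar multiples of one another. Since $|y| = 1$, the reverse triangle inequality yields $\bigl|1 - |x|\bigr| = \bigl||y| - |x|\bigr| \leq |y - x|$, and combining the two estimates gives $|x/|x| - y| \leq 2|x - y|$ as required.

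Neither inequality poses any real obstacle; the only point worth watching is the argument restriction $\cos\theta > 0$ in the first part, which is precisely where the hypothesis $\re(x) > 0$ enters. These two bounds are then invoked in the proof of Theorem \ref{thm:phase_main} in a standard way: the first turns control over the imaginary part of $\tfrac{\norm{Xu^*}}{\lambda^*}b_2\overline{b_1 z_j^*}u_j$ (relative to its real part) into a bound on $|\hat z_j - z_j^* b_1 b_2|$, while the second allows one to pass between $u_j/|u_j|$ and a unit-modulus target at the cost of only a factor of two.
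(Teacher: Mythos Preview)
Your proof is correct. The paper itself does not supply a proof of this lemma; it simply cites Lemmas 10 and 11 of \cite{gao2021exact}. Your trigonometric reduction for the first inequality and the triangle/reverse-triangle argument for the second are both standard and complete, so nothing further is needed.
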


\begin{proof}[Proof of Theorem \ref{thm:phase_main}]
Let $ b_1\in\mathc_1$ satisfy $\|{u - \tilde u  b_1}\|  = \inf_{a\in\mathc_1}\norm{u - \tilde u a} $. Denote $\delta:= u-\tilde u b_1 \in\mathc^n$. Recall $\check u$ is the leading eigenvector of $A$.
From Proposition \ref{prop:no_additive_noise}, Proposition \ref{prop:eigenvector_perturbation}, and Lemma \ref{lem:combine}, there exist constants $c_1,c_2>0$ such that if $\frac{np}{\log n},\frac{np}{\sigma^2}>c_1$, we have
\begin{align}
\norm{\delta} &\leq c_2 \frac{\sigma^2+\sigma}{np}, \label{eqn:new_proof_6}\\
\max_{j\in[n]}\abs{\check u_j - \frac{1}{\sqrt{n}}b_2}&\leq c_2\br{\sqrt{\frac{\log n}{np}} + \frac{1}{\log (np)}}\frac{1}{\sqrt{n}},\label{eqn:new_proof_7}\\
\norm{A-\E A} &\leq c_2\sqrt{np},\label{eqn:new_proof_8}\\
\norm{A\circ W} &\leq c_2\sqrt{np},\label{eqn:new_proof_9}\\
\sum_{j\in[n]}\abs{\im\br{{\sum_{k\neq j} A_{jk}W_{jk} \overline{ z_j^* } z^*_k }}}^2&\leq \frac{n^2p}{2}\br{1+c_2\sqrt{\frac{\log n}{n}}}, \label{eqn:new_proof_12}
\end{align}
with probability at least $1-n^{-9}$, for some $b_2\in\{-1,1\}$.

From (\ref{eqn:new_proof_7}), when $\frac{np}{\log n}\geq 2c_2^2$, $\check u$ is closer to $\one_n/\sqrt{n} b_2$ than to  $-\one_n/\sqrt{n} b_2$ with respect to $\ell_2$ norm. From Lemma \ref{lem:A_related}, $\one_n/\sqrt{n}$ is the leading eigenvector of $\E A$. By Lemma \ref{lem:davis} and Lemma \ref{lem:complex_unit_vector_inequality}, we have
\begin{align*}
\norm{\check u -\one_n/\sqrt{n} b_2} \leq \sqrt{2}\|(I -\one_n\one_n^\T/n ) \check u\| \leq \frac{2\norm{A-\E A}}{np} \leq \frac{2c_2}{\sqrt{np}}.
\end{align*}
Recall that $u^*$ is defined as $z^*\circ \check u$ in (\ref{eqn:u_star_def}). Define $\delta^* := u^* - \frac{1}{\sqrt{n}}z^*b_2$. 
This yields
\begin{align}
\norm{\delta^*} &= \norm{z^*\circ \check u - \frac{1}{\sqrt{n}}z^* \circ \one_n b_2} = \norm{z^* \circ \br{\check u- \frac{1}{\sqrt{n}}\one_n b_2}} \nonumber \\
& = \norm{\check u- \frac{1}{\sqrt{n}}\one_n b_2}  \leq \frac{2c_2\sqrt{np} + 2p}{np}.\label{eqn:new_proof_10}
\end{align}

By the definition of $\tilde u$ in (\ref{eqn:tilde_u_def_intro}), we can decompose $u$  into
\begin{align}
u &= \tilde u  b_1 + \delta  = \frac{Xu^*}{\norm{Xu^*}}b_1+\delta = \frac{b_1}{\norm{Xu^*}} \br{\br{A\circ z^*z^{*\H}}u^*  + \sigma \br{A\circ W }u^*} + \delta\nonumber\\
&=\frac{b_1}{\norm{Xu^*}}  \br{\lambda^* u^* + \sigma \br{A\circ W }u^*}+ \delta,\label{eqn:new_proof_1}
\end{align}
where we use the fact that $u^*$ is the eigenvector of $A\circ z^*z^{*\H}$ corresponding to the eigenvalue $\lambda^*$ by Lemma \ref{lem:no_additive_noise}.
With the definition of $u^*$ and also its approximation $\frac{1}{\sqrt{n}}z^*b_2$, (\ref{eqn:new_proof_1}) leads to
\begin{align*}
u =\frac{b_1}{\norm{Xu^*}}  \br{\lambda^*(z^* \circ \check u) + \sigma \br{A\circ W }\br{ \frac{1}{\sqrt{n}}z^*b_2 + \delta^*}}+ \delta.
\end{align*}
For any $j\in[n]$, denote $[A\circ W]_{j\cdot}$ as its $j$th row. From the  display above, we can express $u_j$ as
\begin{align*}
u_j = \frac{b_1}{\norm{Xu^*}} \br{\lambda^*  z^*_j \check u_j + \frac{\sigma}{\sqrt{n}}  \sum_{k\neq j} A_{jk}W_{jk} z^*_k b_2 + \sigma [A\circ W]_{j\cdot} \delta^*} + \delta_j.
\end{align*} 
By  (\ref{eqn:spectral_estimator_def}), when $u_j\neq 0$, we have
\begin{align}
\abs{\hat z_j - z_j^*b_1b_2 } &= \abs{b_2\overline{b_1 z_j^*} \hat z_j-1}= \abs{b_2\overline{b_1 z_j^*} \frac{u_j}{\abs{u_j}}-1} = \abs{\frac{b_2\overline{b_1 z_j^*} u_j}{\abs{b_2\overline{b_1 z_j^*} u_j}}-1}\nonumber\\
&=\abs{\frac{\frac{\norm{Xu^*}}{\lambda^*}b_2\overline{b_1 z_j^*} u_j}{\abs{\frac{\norm{Xu^*}}{\lambda^*}b_2\overline{b_1z_j^*} u_j}}-1}\label{eqn:rev_14}
\end{align}
which is all about $\frac{\norm{Xu^*}}{\lambda^*}b_2\overline{b_1 z_j^*} u_j$. 
With
\begin{align*}
\xi_j := \sum_{k\neq j} A_{jk}W_{jk}  \overline{z^*_j} z^*_k,
\end{align*}
we have
\begin{align}\label{eqn:new_proof_2}
\frac{\norm{Xu^*}}{\lambda^*}b_2\overline{b_1z_j^*} u_j &=  {b_2}\check u_j + \frac{\sigma}{\lambda^*\sqrt{n}}\xi_{j}  +  \frac{\sigma [A\circ W]_{j\cdot}\delta^* b_2\overline{ z_j^*}}{\lambda^*}+ \frac{\norm{Xu^*}}{\lambda^*} \delta_j b_2\overline{b_1 z_j^*}.
\end{align}
Note that  from (\ref{eqn:new_proof_7}), we have
\begin{align}\label{eqn:new_proof_3}
 b_2 \check u_j \geq  \br{1-c_2\br{\sqrt{\frac{\log n}{np}} + \frac{1}{\log (np)}}}\frac{1}{\sqrt{n}}.
\end{align}

Let $0<\gamma,\rho<1/8$ whose values will be given later. Consider the following two cases.

(1) If
\begin{align}
\abs{\frac{\sigma}{\lambda^*\sqrt{n}}\xi_{j}} &\leq \frac{\gamma}{\sqrt{n}}, \label{eqn:new_proof_4}\\
\abs{  \frac{\sigma [A\circ W]_{j\cdot}\delta^* }{\lambda^*}} &\leq \frac{\rho}{\sqrt{n}},\\
\abs{\frac{\norm{Xu^*}}{\lambda^*} \delta_j } &\leq \frac{\rho}{\sqrt{n}} \label{eqn:new_proof_5}
\end{align}
all hold, then from (\ref{eqn:new_proof_2}) and (\ref{eqn:new_proof_3}), we have 
\begin{align*}
\re\br{\frac{\norm{Xu^*}}{\lambda^*}b_2\overline{b_1z_j^*} u_j } \geq \br{1-c_2\br{\sqrt{\frac{\log n}{np}} + \frac{1}{\log (np)}} - \gamma - 2\rho}\frac{1}{\sqrt{n}},
\end{align*}
which can be further lower bounded by $1/(2\sqrt{n})$ for sufficiently large $\frac{np}{\log n}$. Therefore, $u_j\neq 0$ in this case.
Then by Lemma \ref{lem:x_normalize_y_diff} and (\ref{eqn:rev_14}), we have
\begin{align*}
&\abs{\hat z_j - z_j^*b_1b_2 } \\
 &\leq \frac{\abs{\im\br{\frac{\sigma}{\lambda^*\sqrt{n}}\xi_{j}  +  \frac{\sigma [A\circ W]_{j\cdot}\delta^* b_2\overline{ z_j^*}}{\lambda^*}+ \frac{\norm{Xu^*}}{\lambda^*} \delta_j b_2\overline{b_1 z_j^*}}}}{ \br{1-c_2\br{\sqrt{\frac{\log n}{np}} + \frac{1}{\log (np)}} - \gamma - 2\rho}\frac{1}{\sqrt{n}}} \\
&\leq \frac{\abs{\im\br{\frac{\sigma}{\lambda^*\sqrt{n}}\xi_{j} }}}{ \br{1-c_2\br{\sqrt{\frac{\log n}{np}} + \frac{1}{\log (np)}} - \gamma - 2\rho}\frac{1}{\sqrt{n}}}  + \frac{\abs{  \frac{\sigma [A\circ W]_{j\cdot}\delta^* }{\lambda^*}}  + \abs{\frac{\norm{Xu^*}}{\lambda^*} \delta_j } }{\frac{1}{2\sqrt{n}}} \\
& = \frac{\frac{\sigma}{\lambda^*}\abs{\im\br{\xi_{j} }}}{ \br{1-c_2\br{\sqrt{\frac{\log n}{np}} + \frac{1}{\log (np)}} - \gamma - 2\rho}} +  \frac{2\sqrt{n}\sigma}{\lambda^*}  \abs{[A\circ W]_{j\cdot}\delta^*} +\frac{2\sqrt{n}\norm{Xu^*}}{\lambda^*} \abs{\delta_j}.
\end{align*}
Note that for any $x,y\in\mathr$ and any $\eta>0$, we have $(x+y)^2 =x^2 + 2(\eta^{1/2} x)(\eta^{-1/2}y)+y^2\leq (1+\eta)x^2+(1+\eta^{-1})y^2$. We have
\begin{align*}
\abs{\hat z_j - z_j^*b_1b_2 }^2 &\leq  \frac{(1+\eta)\frac{\sigma^2}{\lambda^{*2}}\abs{\im\br{\xi_{j} }}^2}{ \br{1-c_2\br{\sqrt{\frac{\log n}{np}} + \frac{1}{\log (np)}} - \gamma - 2\rho}^2} \\
&\quad  +  (1+\eta^{-1}) \frac{8n\sigma^2}{\lambda^{*2}}\abs{[A\circ W]_{j\cdot}\delta^*} ^2+(1+\eta^{-1}) \frac{8n\norm{Xu^*}^2}{\lambda^{*2}}\abs{\delta_j}^2,
\end{align*}
where the value of $\eta>0$ will be given later.

(2) If any one of (\ref{eqn:new_proof_4})-(\ref{eqn:new_proof_5}) does not hold, we simply upper bound $|\hat z_j - z_j^*b_1b_2| $ by 2. Then this case can be written as
\begin{align*}
&\abs{\hat z_j - z_j^*b_1b_2 }^2\\
 &\leq 4\br{\indic{\abs{\frac{\sigma}{\lambda^*\sqrt{n}}\xi_{j}} > \frac{\gamma}{\sqrt{n}}} + \indic{\abs{  \frac{\sigma [A\circ W]_{j\cdot}\delta^* }{\lambda^*}} > \frac{\rho}{\sqrt{n}}} + \indic{\abs{\frac{\norm{Xu^*}}{\lambda^*} \delta_j } > \frac{\rho}{\sqrt{n}}}} \\
&\leq 4\br{ \indic{\sigma\abs{\xi_j} \geq {\gamma\lambda^*}}+ \frac{\sigma^2n\abs{[A\circ W]_{j\cdot}\delta^* }^2}{\rho^2\lambda^{*2}}+ \frac{n\norm{Xu^*}^2\abs{\delta_j}^2}{\rho^2\lambda^{*2}}},
\end{align*}
where in the last inequality we use the fact $\indic{x\geq y}\leq x^2/y^2$ for any $x,y>0$.

Combining the above two cases together, we have
\begin{align*}
&\abs{\hat z_j - z_j^*b_1b_2 }^2 \\
&\leq  \frac{(1+\eta)\frac{\sigma^2}{\lambda^{*2}}\abs{\im\br{\xi_{j}  }}^2}{ \br{1-c_2\br{\sqrt{\frac{\log n}{np}} + \frac{1}{\log (np)}} - \gamma - 2\rho}^2} \\
&\quad  +  (1+\eta^{-1}) \frac{8n\sigma^2}{\lambda^{*2}}\abs{[A\circ W]_{j\cdot}\delta^*} ^2+(1+\eta^{-1}) \frac{8n\norm{Xu^*}^2}{\lambda^{*2}}\abs{\delta_j}^2\\
&\quad +  4\br{ \indic{\sigma\abs{\xi_j} \geq {\gamma\lambda^*}}+ \frac{\sigma^2n\abs{[A\circ W]_{j\cdot}\delta^* }^2}{\rho^2\lambda^{*2}}+ \frac{n\norm{Xu^*}^2\abs{\delta_j}^2}{\rho^2\lambda^{*2}}}\\
&\leq   \frac{(1+\eta)\frac{\sigma^2}{\lambda^{*2}}\abs{\im\br{\xi_{j}  }}^2}{ \br{1-c_2\br{\sqrt{\frac{\log n}{np}} + \frac{1}{\log (np)}} - \gamma - 2\rho}^2} +  4\indic{\sigma\abs{\xi_j} \geq {\gamma\lambda^*}}\\
&\quad   + 8 (1+\eta^{-1}+\rho^{-2}) \frac{n\sigma^2}{\lambda^{*2}}\abs{[A\circ W]_{j\cdot}\delta^*} ^2 +8(1+\eta^{-1} + \rho^{-2}) \frac{n\norm{Xu^*}^2}{\lambda^{*2}}\abs{\delta_j}^2.
\end{align*}
The  display above holds for each $j\in[n]$. Summing over $j$, we have
\begin{align*}
&n\ell(\hat z, z^*) \\
&\leq \sum_{j\in[n]} \abs{\hat z_j - z_j^*b_1b_2 }^2\\
&\leq  \frac{(1+\eta)\frac{\sigma^2}{\lambda^{*2}}}{ \br{1-c_2\br{\sqrt{\frac{\log n}{np}} + \frac{1}{\log (np)}} - \gamma - 2\rho}^2} \sum_{j\in[n]}\abs{\im\br{\xi_{j}  }}^2 +  4 \sum_{j\in[n]}\indic{\sigma\abs{\xi_j} \geq {\gamma\lambda^*}}\\
&\quad   + 8 (1+\eta^{-1}+\rho^{-2}) \frac{n\sigma^2}{\lambda^{*2}} \sum_{j\in[n]}\abs{[A\circ W]_{j\cdot}\delta^*} ^2 +8(1+\eta^{-1} + \rho^{-2}) \frac{n\norm{Xu^*}^2}{\lambda^{*2}} \sum_{j\in[n]}\abs{\delta_j}^2\\
&\leq  \frac{(1+\eta)\frac{\sigma^2}{\lambda^{*2}}}{ \br{1-c_2\br{\sqrt{\frac{\log n}{np}} + \frac{1}{\log (np)}} - \gamma - 2\rho}^2} \sum_{j\in[n]}\abs{\im\br{\xi_{j}  }}^2 +  4 \sum_{j\in[n]}\indic{\sigma\abs{\xi_j} \geq {\gamma\lambda^*}}\\
&\quad   + 8 (1+\eta^{-1}+\rho^{-2}) \frac{n\sigma^2}{\lambda^{*2}} \norm{A\circ W}^2\norm{\delta^*}^2 +8(1+\eta^{-1} + \rho^{-2}) \frac{n\norm{Xu^*}^2}{\lambda^{*2}} \norm{\delta}^2,
\end{align*}
where in the last inequality, we use $\sum_{j\in[n]}\abs{[A\circ W]_{j\cdot}\delta^*} ^2 = \norm{(A\circ W)\delta^*}^2\leq  \norm{A\circ W}^2\norm{\delta^*}^2$.

We are going to simplify the  display above.  From (\ref{eqn:new_proof_6}), (\ref{eqn:new_proof_9}), (\ref{eqn:new_proof_10}), and  (\ref{eqn:new_proof_12}), we have upper bounds for $\norm{\delta},\norm{A\circ W},$  $\norm{\delta^*}$, and $\sum_{j\in[n]}\abs{\im\br{\xi_{j}}}^2 $. Using (\ref{eqn:new_proof_8}), Lemma \ref{lem:no_additive_noise}, and Lemma \ref{lem:A_related}, we have $\lambda^*\geq (n-1)p-c_2\sqrt{np}$
and a crude bound $np/2\leq \lambda^*\leq 2np$ when $\frac{np}{\log n}$ is greater than some sufficiently large constant. Due to the decomposition $X = A\circ z^*z^{*\H} + \sigma A\circ W$ and that $( A\circ z^*z^{*\H})u^* = \lambda^*u^*$, we have
\begin{align*}
\norm{Xu^*}  =\norm{\lambda^* u^* +  \sigma A\circ W u^* }\leq \lambda^* + \sigma\norm{ A\circ W} \leq np + c_2\sigma \sqrt{np}.
\end{align*}
From Lemma  \ref{lem:re_exponential}, if $\gamma$ satisfies $\frac{\gamma^2np}{\sigma^2}>c_3$ for some constant $c_3>0$, we have
\begin{align*}
\sum_{j\in[n]}\indic{\sigma\abs{\xi_j} \geq {\gamma\lambda^*}} &\leq \sum_{j\in[n]}\indic{\frac{2\sigma}{np}\abs{\xi_j} \geq {\gamma}}  \leq \frac{4\sigma^2}{\gamma^2 p}\ebr{-\frac{1}{16}\sqrt{\frac{\gamma^2 np}{\sigma^2}}},
\end{align*}
holds with probability at least $1-\ebr{-\frac{1}{32}\sqrt{\frac{\gamma^2 np}{\sigma^2}}}$. When $c_3$ is sufficiently large, we have 
\begin{align*}
\frac{4\sigma^2}{\gamma^2 np}\ebr{-\frac{1}{16}\sqrt{\frac{\gamma^2 np}{\sigma^2}}} \leq \br{\frac{\sigma^2}{\gamma^2 np}}^3,
\end{align*}
which is due to the fact $4\ebr{-\sqrt{x}/16}\leq 1/x^2$ when $x\geq x_0$ for some large $x_0>0$.

Combining the above results together, we have
\begin{align*}
\ell(\hat z,z^*) &\leq \frac{(1+\eta)\br{\frac{1}{1-c_2\frac{1}{\sqrt{np}} -\frac{1}{n}}}^2}{ \br{1-c_2\br{\sqrt{\frac{\log n}{np}} + \frac{1}{\log (np)}} - \gamma - 2\rho}^2}  \br{1+c_2\sqrt{\frac{\log n}{n}}} \frac{\sigma^2}{2np}+\br{\frac{\sigma^2}{\gamma^2 np}}^3\\
&\quad  +32 (1+\eta^{-1}+\rho^{-2}) c_2^2\br{\frac{2c_2}{\sqrt{np}} }^2  \frac{\sigma^2}{np}\\
&\quad +128(1+\eta^{-1} + \rho^{-2}) \br{1+\frac{c_2^2\sigma^2}{np}}c_2^2  \frac{\sigma^4+\sigma^2}{(np)^2}.
\end{align*}
Note that $\frac{1}{(1-x)^2}\leq 1 + 16x,\forall \leq x\leq \frac{1}{2}$. We have $\br{1-c_2\br{\sqrt{\frac{\log n}{np}} + \frac{1}{\log (np)}} - \gamma - 2\rho}^{-2}\leq 16\br{c_2\br{\sqrt{\frac{\log n}{np}} + \frac{1}{\log (np)}} + \gamma + 2\rho}$ and $\br{1-c_2\frac{1}{\sqrt{np}} -\frac{1}{n}}^{-2}\leq 16\br{c_2\frac{1}{\sqrt{np}} +\frac{1}{n}}$ as long as  $\frac{np}{\log n}$ is greater than some sufficiently large constant.
After rearrangement, there exists some constant $c_5>0$ such that
\begin{align*}
\ell(\hat z,z^*) \leq \Bigg(1+c_5\Bigg(&\eta + \gamma + \rho +\sqrt{\frac{\log n}{np}} + \frac{1}{\log (np)} + \gamma^{-6}\br{\frac{\sigma^2}{np}}^2 \\
&\quad+ (\eta^{-1} + \rho^{-2})\br{\frac{1+\sigma^2}{np}  }\Bigg) \Bigg)\frac{\sigma^2}{2np}.
\end{align*}
We can choose $\gamma^2 = \sqrt{{\sigma^2}/{(np)}}$ (then $\frac{\gamma^2 np}{\sigma^2}>c_3$ is guaranteed as long as $\frac{np}{\sigma^2}>c_3^2$). We also set $\rho^2 = \sqrt{(1+\sigma^2)/np}$ and let $\eta =\rho^2$. Then, there exists some constant $c_6>0$ such that
\begin{align*}
\ell(\hat z,z^*)\leq \br{1+c_6\br{\br{\frac{\sigma^2}{np}}^\frac{1}{4} +\sqrt{\frac{\log n}{np}}  + \frac{1}{\log (np)}}}\frac{\sigma^2}{2np}.
\end{align*}
This holds with probability at least $1-n^{-9} -\ebr{-\frac{1}{32}\br{\frac{np}{\sigma^2}}^\frac{1}{4}}$.
\end{proof}

\subsection{Proofs of Auxiliary Lemmas}

\begin{proof}[Proof of Lemma \ref{lem:davis}]
Let $\tilde \lambda_1\geq \tilde \lambda_2\geq \ldots \geq \tilde \lambda_d$ be eigenvalues of $\tilde X$. By Weyl's inequality, we have $\| \tilde \lambda_{r+1}  -\lambda_{r+1}\| \leq \|X-\tilde X\|$. Under the assumption $\|{X-\tilde X}\|< (\lambda_r-\lambda_{r+1})/4$, we have
\begin{align*}
\lambda_r -\tilde \lambda_{r+1} &= \lambda_r  - \lambda_{r+1} + \lambda_{r+1} - \tilde \lambda_{r+1}\geq \lambda_r  - \lambda_{r+1} - \norm{X -\tilde X} > \frac{3}{4} \br{\lambda_r  - \lambda_{r+1}} >0.
\end{align*}
Define 
\begin{align*}
\Theta(U,\tilde U) := \text{diag}( \cos^{-1}\sigma_1,\ldots,  \cos^{-1} \sigma_{r}) \in \mathr^{r\times r}, 
\end{align*}
where $\sigma_1 \geq \sigma_2\geq \ldots \geq \sigma_{r}$ are singular values of $U^\H \tilde U$. Since $\lambda_r -\tilde \lambda_{r+1}>0$, 
by  Davis-Kahan Theorem \cite{davis1970rotation}, we have 
\begin{align*}
\norm{\sin \Theta (U,\tilde U)}\leq \frac{\norm{X-\tilde X}}{\lambda_r -\tilde \lambda_{r+1}} \leq \frac{4\norm{X-\tilde X}}{3(\lambda_r - \lambda_{r+1})}.
\end{align*}
From \cite{davis1970rotation}, we also have $\|{\sin \Theta (U,\tilde U)}\| = \|{(I-UU^\H)\tilde U}\|$. The proof is complete.
\end{proof}

\begin{proof}[Proof of Lemma \ref{lem:complex_unit_vector_inequality}]
Since both $x$ and $y$ are unit vectors, we have
\begin{align}\label{eqn:rev_1}
\norm{x - y b}^2 =2 - x^\H y b - ( y b)^\H x =2-2\re(x^\H y b), \forall b\in\mathc_1.
\end{align}
 Therefore, when $x^\H y= 0$, we have $\norm{x - y b}=\sqrt{2}$ invariant of $b$. In this case, we also have  $\norm{(I_n-xx^\H)y}=\norm{y}=1$. This proves the  statement in the lemma  for the $x^\H y= 0$ case. When
$x^\H y\neq 0$, the infimum over $b$ in (\ref{eqn:rev_1}) is achieved when $b = y^\H x /|y^\H x|$. We then have
\begin{align*}
\inf_{b\in\mathc_1}\norm{x - y b}^2 & = \norm{y  - \frac{x^\H y}{\abs{x^\H y}} x}^2 = \norm{y  - xx^\H y + xx^\H y - \frac{x^\H y}{\abs{x^\H y}} x}^2 \\
&= \norm{y  - xx^\H y}^2 +  \norm{\br{1 - \frac{1}{\abs{x^\H y}}} (x^\H y)x}^2\\
&= \norm{y  - xx^\H y}^2 + \abs{1 - \frac{1}{\abs{x^\H y}}}^2\abs{x^\H y}^2\\
&\leq \norm{y  - xx^\H y}^2  + \abs{1-\abs{x^\H y}}^2,
\end{align*}
where we use the orthogonality between $(I_d -xx^\H)y$ and $x$.
With $\norm{y  - xx^\H y}^2  = 1+ \norm{xx^\H y}^2 - 2y^\H xx^\H y = 1- \abs{x^\H y}^2 \geq \br{1-\abs{x^\H y}}^2$, where the last inequality is due to $0\leq \abs{x^\H y}\leq 1$, the proof is complete.
\end{proof}

\begin{proof}[Proof of Lemma \ref{lem:A_related}]
Note that $\E A = pJ_n -pI_n$. Note that $(\one_n/\sqrt{n})^\T \E A (\one_n/\sqrt{n}) = (n-1)p$ and for any unit vector $u\in\mathr^n$ that is orthogonal to $\one_n/\sqrt{n}$, we have $u^\T \E A u = 0- p\|u\|^2 = -p$. Hence, $(n-1)p$ is the largest eigenvalue with $\one_n/\sqrt{n}$ being the corresponding eigenvector, and $-p$ is another eigenvalue with multiplicity $n-1$.

By Weyl's inequality, we have $|\lambda'-(n-1)p|,\max_{2\leq j\leq n}|\lambda'_j - (-p)| \leq \norm{A-\E A}$, which leads to (\ref{eqn:rev_9}) after rearrangement. This completes the proof,  with $\lambda^*=\lambda'$ and $\lambda^*_2=\lambda'_2$ by Lemma \ref{lem:no_additive_noise}.
\end{proof}

\begin{proof}[Proof of Lemma \ref{lem:combine}]
The first two inequalities stem from Lemma 5 and Lemma 6 of \cite{gao2021exact}, respectively. The third inequality is derived from Lemma 7 and (29) in  \cite{gao2021exact}.
\end{proof}

\begin{proof}[Proof of Lemma \ref{lem:re_exponential}]
It is proved in (31) of \cite{gao2021exact}.
\end{proof}

\section{Proof of Lemma \ref{lem:eigenspace_perturbation}}\label{sec:proof_od}

Before the proof, we  first state a technical lemma that is analogous to Lemma \ref{lem:complex_unit_vector_inequality}.
\begin{lemma}\label{lem:U_V_relationship}
For any two matrices $U,V\in\matho(d_1,d_2)$, we have
\begin{align*}
\norm{(I_{d_1} -  V V^\T)U}\leq \inf_{O\in \mathcal{O}(d_2)}\norm{V - U O} \leq \sqrt{2}\norm{(I_{d_1} - V V^\T)U}.
\end{align*}
\end{lemma}
\begin{proof}%
Let $V_\perp \in\mathr^{d_1\times (d_1-d_2)}$ be the complement of $V$ such that $(V,V_\perp) \in\matho(d_1)$. From  Lemma 1 of \cite{cai2018rate}, we have $\|U^\T V_\perp\| \leq  \inf_{O\in \mathcal{O}(d_2)}\norm{V - U O} \leq \sqrt{2} \|U^\T V_\perp\|$. The proof is complete with $ \|U^\T V_\perp\| = \|V_\perp V_\perp^\T U\| =\norm{(I_{d_1} - V V^\T)U}$.
\end{proof}

\begin{proof}[Proof of Lemma \ref{lem:eigenspace_perturbation}]
We first give an explicit expression for the first-order approximation $\tilde V$. Denote $\mu_1\geq \ldots\geq \mu_n$ as the eigenvalues of $Y$. 
Let $YV^* =  G  D  N^\T$ be its SVD where $ G \in \matho(n,d)$, $ N \in\matho(d)$, and $ D\in\mathr^{d\times d}$ is a diagonal matrix with singular values. Define $M^*=\text{diag}(\mu_1^*,\ldots,\mu^*_d)\in\mathr^{d\times d}$. Since 
\begin{align}\label{eqn:od_new_proof_1}
YV^* = Y^* V^* + (Y-Y^*)V^* =  V^*M^* + (Y-Y^*)V^*,
\end{align}
we have
\begin{align}\label{eqn:od_new_proof_3}
\max_{i\in[d]} \abs{D_{ii} - \mu^*_i}\leq \norm{ (Y-Y^*)V^*}\leq \norm{Y-Y^*},
\end{align}
by Weyl's inequality. Under the assumption that $\norm{Y-Y^*}\leq \min\{ \mu_d^*-\mu_{d+1}^*,\mu_d^*\}/4$, we have $\{D_{ii}\}_{i\in[d]}$ all being positive.
Note that
\begin{align*}
\tilde V &=\argmin_{V'\in \matho(n,d)}\fnorm{V'- YV^*}^2=\argmax_{V\in \matho(n,d)} \iprod{V'}{ YV^*} \\
&= \argmax_{V'\in \matho(n,d)} \text{tr}\br{V'^\T GDN^\T}= \argmax_{V'\in \matho(n,d)}\iprod{G^\T V' N}{ D}.
\end{align*}
 Due to the fact that $G, V' \in\matho(n, d)$, $N\in\matho(d)$, and the diagonal entries of $D$ are all positive,  the maximum is achieved when $G^\T V' N=I_d$. This gives $\tilde V = GN^\T$ which can also be written as
\begin{align}\label{eqn:od_proof_6}
\tilde V= YV^* S,
\end{align}
where
\begin{align}\label{eqn:od_proof_7}
S:= ND^{-1}N^\T\in\mathr^{d\times d}
\end{align}
 can be seen as a linear operator and plays a similar role as $1/\norm{Xu^*}$ for $\tilde u = Xu^*/\norm{Xu^*}$ in (\ref{eqn:tilde_u_def_intro}).

 Define $M:= \text{diag}(\mu_1,\mu_2,\ldots,\mu_d)\in\mathr^{d\times d}$. Then we have
\begin{align*}
VM&=YV,\\
\tilde VM & = YV^*SM,
\end{align*}
and consequently,
\begin{align*}
(V-\tilde V)M = Y(V- V^*SM) = Y(V- \tilde V) + Y(\tilde V - V^*SM).
\end{align*}
Note that $(I-VV^\T)Y=Y(I-VV^\T)$ as $V$ is the leading eigenspace of $Y$.
After rearranging, we have
\begin{align*}
Y\tilde V -\tilde V M = Y(\tilde V - V^*SM).
\end{align*}
Multiplying $(I-VV^\T)$ on both sides, we have
\begin{align*}
Y  (I-VV^\T)\tilde V - (I-VV^\T)\tilde V M &=(I-VV^\T)Y  \tilde V - (I-VV^\T)\tilde V M \\
&=  (I-VV^\T)Y  ({\tilde V- V^* S M}),
\end{align*}
where the first equation is due to $Y  (I-VV^\T) = (I-VV^\T)Y$ as $V$ is the leading eigenspace of $Y$.
Note that for any $x\in\text{span}(I-VV^\T)$ and for any $i\in[d]$, we have $\norm{Y  x - \mu_i x}\geq (\mu_i - \mu_{d+1})\norm{x}$. Then we have
\begin{align*}
\norm{Y  (I-VV^\T)\tilde V - (I-VV^\T)\tilde V M}\geq (\mu_d-\mu_{d+1}) \norm{ (I-VV^\T)\tilde V}.
\end{align*}
As a result, we have
\begin{align}\label{eqn:od_proof_5}
 \norm{ (I-VV^\T)\tilde V} \leq \frac{1}{\mu_d-\mu_{d+1}} \norm{ (I-VV^\T)Y  ({\tilde V- V^* S M})},
\end{align}
which is analogous to (\ref{eqn:proof_3}) in the proof of Lemma \ref{lem:eigenspace_perturbation}. 
By Lemma \ref{lem:U_V_relationship},
we have
\begin{align}\label{eqn:od_new_proof_2}
\inf_{O\in\matho(d)}\norm{V-\tilde VO}\leq \sqrt{2} \norm{ (I-VV^\T)\tilde V}  \leq  \frac{\sqrt{2}}{\mu_d-\mu_{d+1}} \norm{ (I-VV^\T)Y  ({\tilde V- V^* S M})}.
\end{align}

In the next, we are going to analyze $(I-VV^\T)Y  ({\tilde V- V^* S M})$. Using  (\ref{eqn:od_proof_6}), we have
\begin{align*}
& (I-VV^\T)Y  ({\tilde V- V^* S M}) \\
 &=  (I-VV^\T)Y  \br{YV^*S- V^* S M} \\
 & =  (I-VV^\T)Y  \br{  V^*M^*S + (Y-Y^*)V^*S- V^* S M} \\
 &=  (I-VV^\T)Y  V^* \br{M^*S  -  S M} +   (I-VV^\T)Y (Y-Y^*)V^*S\\
 &=  (I-VV^\T)\br{ V^*M^* + (Y-Y^*)V^*}\br{M^*S  -  S M} \\
 &\quad +   (I-VV^\T)V^* M^* V^{*\T} (Y-Y^*)V^*S \\
 &\quad +   (I-VV^\T)(Y^* -  V^* M^* V^{*\T})(Y-Y^*)V^*S +  (I-VV^\T)(Y-Y^*) (Y-Y^*)V^*S\\
 &=  (I-VV^\T) V^*M^* \br{\br{M^*S  -  S M} + V^{*\T} (Y-Y^*)V^*S}  \\
 &\quad + (I-VV^\T)(Y-Y^*)V^* \br{M^*S  -  S M} \\
 &\quad +   (I-VV^\T)(Y^* -  V^* M^* V^{*\T})(Y-Y^*)V^*S +  (I-VV^\T)(Y-Y^*) (Y-Y^*)V^*S,
\end{align*}
where in the second to  last equation, we use  (\ref{eqn:od_new_proof_1}) and the decomposition $Y=V^* M^* V^{*\T} + (Y^*-V^* M^* V^{*\T}) + (Y-Y^*)$. Hence, with $\|Y^* -  V^* M^* V^{*\T}\| = \max\{|\mu^*_{d+1}|,|\mu^*_n|\}$, we have 
\begin{align*}
&\norm{ (I-VV^\T)Y  ({\tilde V- V^* S M})} \\
&\leq \mu_1^*\norm{ (I-VV^\T) V^*} \br{\norm{M^*S  -  S M} + \norm{Y-Y^*}\norm{S}} \\
&\quad  + \norm{Y-Y^*}\norm{M^*S  -  S M}  + \max\{|\mu^*_{d+1}|,|\mu^*_n|\} \norm{Y-Y^*}\norm{S} +\norm{Y-Y^*}^2\norm{S}.
\end{align*}
Then from (\ref{eqn:od_new_proof_2}), we have
\begin{align*}
\inf_{O\in\matho(d)}\norm{V-\tilde VO} &\leq   \frac{\sqrt{2}}{\mu_d-\mu_{d+1}} \Bigg( \mu_1^*\norm{ (I-VV^\T) V^*} \br{\norm{M^*S  -  S M} + \norm{Y-Y^*}\norm{S}} \\
&\quad + \norm{Y-Y^*}\norm{M^*S  -  S M}   + \max\{|\mu^*_{d+1}|,|\mu^*_n|\} \norm{Y-Y^*}\norm{S} \\
&\quad+\norm{Y-Y^*}^2\norm{S}\Bigg).
\end{align*}

In the rest of the proof, we are going to simplify the  display above.  
By Weyl's inequality, we have
\begin{align}\label{eqn:rev_6}
\max_{i\in[n]}\abs{\mu_i-\mu_i^*}\leq \norm{Y-Y^*}.
\end{align}
Since $\norm{Y-Y^*}\leq  (\mu_d^*-\mu_{d+1}^*)/4$ is assumed, we have
\begin{align*}
\mu_d-\mu_{d+1} \geq \frac{\mu_d^* -\mu_{d+1}^*}{2}.
\end{align*}
By this assumption and Lemma \ref{lem:davis},
we have
\begin{align*}
\norm{ (I-VV^\T) V^*} 
 \leq \frac{2\norm{Y-Y^*}}{\mu^*_d -\mu^*_{d+1}}.
\end{align*}
By (\ref{eqn:od_new_proof_3}) and the definition of $S$ in (\ref{eqn:od_proof_7}), we have
\begin{align*}
\norm{S} = \norm{D^{-1}}\leq \frac{1}{\mu_d^* - \norm{Y-Y^*}} \leq \frac{4}{3\mu_d^*}.
\end{align*}
In addition,
\begin{align*}
\norm{M^*S  -  S M} &\leq \norm{M^*S  -  S M^*} + \norm{S\br{M -M^*}}\\
&\leq   \norm{(M^*-\mu_d^*I_d)S  +  S (\mu_d^*I_d-M^*)} + \norm{S} \norm{M-M^*}\\
&\leq \norm{S} \br{2\norm{M^*-\mu_d^*I_d} +  \norm{M-M^*}}\\
&\leq  \frac{4}{3\mu_d^*} \br{2(\mu_1^* - \mu_d^*) + \norm{Y-Y^*}},
\end{align*}
where in the last inequality we use the fact  $\norm{M-M^*} =\max_{i\in[d]}\abs{\mu_i - \mu_i^*}$ and (\ref{eqn:rev_6}). Combining all the results together, we have
\begin{align*}
&\inf_{O\in\matho(d)}\norm{V-\tilde VO} \\
&\leq \frac{2\sqrt{2}}{\mu_d^*-\mu_{d+1}^*} \br{\mu_1^*\frac{2\norm{Y-Y^*}}{\mu^*_d -\mu^*_{d+1}}\br{ \frac{4 \br{2(\mu_1^* - \mu_d^*) + \norm{Y-Y^*}}}{3\mu_d^*} + \frac{4\norm{Y-Y^*}}{3\mu_d^*}}}\\
&\quad +\frac{4}{3\mu_d^*} \br{2(\mu_1^* - \mu_d^*) + \norm{Y-Y^*}}\norm{Y-Y^*}  + \frac{4\max\{|\mu^*_{d+1}|,|\mu^*_n|\} \norm{Y-Y^*}}{3\mu_d^*} \\
&\quad +  \frac{4\norm{Y-Y^*}^2}{3\mu_d^*}\Bigg)\\
&\leq  \frac{16\sqrt{2}}{3\br{\mu_d^* - \mu_{d+1}^*}\mu_d^*}\br{\frac{2\mu_1^*}{3(\mu_d^*-\mu_{d+1}^*)} + 1}\norm{Y-Y^*}^2 \\
&\quad + \frac{8\sqrt{2}}{3\br{\mu_d^* - \mu_{d+1}^*}\mu_d^*} \br{\frac{4\mu_1^*\br{\mu_1^*-\mu_d^*}}{\mu_d^* - \mu_{d+1}^*} + 2(\mu_1^* - \mu_d^*) +  \max\{|\mu^*_{d+1}|,|\mu^*_n|\}} \norm{Y-Y^*}.
\end{align*}
\end{proof}

\bibliographystyle{plain}
\bibliography{phase}

\appendix

\section{Proofs of Lemma  \ref{lem:no_additive_noise_od}, Proposition \ref{prop:no_additive_noise_od}, and Proposition \ref{prop:eigenspace_perturbation}}\label{sec:appendix_A}

\begin{proof}[Proof of Lemma \ref{lem:no_additive_noise_od}]
Similar to the proof of Lemma \ref{lem:no_additive_noise}, we can show each eigenvalue of $A$ is also an eigenvalue of $(A\otimes J_d)\circ Z^*Z^{*\T}$ with multiplicity $d$. At the same time, each eigenvalue of $(A\otimes J_d)\circ Z^*Z^{*\T}$ must be an  eigenvalue of $A$. The proof is omitted here.
\end{proof}

\begin{proof}[Proof of Proposition \ref{prop:no_additive_noise_od}]
Since $\sigma=0$, we have $U=U^*$. Then $\hat Z_j = \mathp(U_j) = \mathp(U^*_j) = \mathp(Z^*_j \check u_j)$. Since $Z^*_j$ is an orthogonal matrix, we have $\hat Z_j = Z^*_j \text{sign}(\check u_j)$. Then by (\ref{eqn:no_additive_noise}), the proposition is proved by the same argument used to prove Proposition \ref{prop:no_additive_noise}.
\end{proof}

Before proving Proposition \ref{prop:eigenspace_perturbation}, we state some properties of $A$ and $\mathw$. The following lemma can be seen as an analog of Lemma \ref{lem:combine}.
\begin{lemma}\label{lem:od_operator_norm_W}
There exist constants $C_1,C_2>0$ such that if $\frac{np}{\log n}>C_1$, then we have
\begin{align*}
&\norm{(A\otimes J_d)\circ \mathw}\leq C_2\sqrt{dnp},\\
&\sum_{i=1}^n\fnorm{\sum_{j\in[n]\backslash\{i\}}A_{ij}\left(Z_i^{*\T}\mathw_{ij}Z_j^*-Z_j^{*\T}\mathw_{ji}Z_i^*\right)}^2 \leq 2d(d-1)n^2p\left(1+C_2\sqrt{\frac{\log n}{n}}\right), \\
& \sum_{i=1}^n\fnorm{\sum_{j\in[n]\backslash\{i\}}A_{ij}\mathw_{ij}Z_j^*}^2 \leq d^2n^2p\left(1+C_2\sqrt{\frac{\log n}{n}}\right),
\end{align*}
hold with probability at least $1-3n^{-10}$.
\end{lemma}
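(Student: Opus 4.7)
The plan is to prove each of the three inequalities separately and combine them via a union bound. For the operator-norm bound on $\mathcal{E}:=(A\otimes J_d)\circ\mathw$, I would view $\mathcal{E}$ as a symmetric $nd\times nd$ random matrix whose upper-triangular entries, conditionally on $A$, are independent centered Gaussians with per-entry variance $A_{ij}\in\{0,1\}$ (tiled $d\times d$ per block). Applying a sharp variance-profile inequality for symmetric Gaussian matrices (of Bandeira--van Handel type) yields
\[
\norm{\mathcal{E}}\lesssim \sqrt{d\cdot\max_i\sum_{j} A_{ij}}\;+\;\sqrt{d\log(nd)}
\]
with probability at least $1-n^{-10}$. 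Each row-sum $\sum_j A_{ij}$ is $\mathrm{Binomial}(n-1,p)$, so a standard Chernoff/Bernstein step (also underlying Lemma \ref{lem:operator_norm_A}) gives $\max_i\sum_j A_{ij}\leq 2np$ when $np\gtrsim \log n$. The subdominant $\sqrt{d\log n}$ term is then absorbed into $\sqrt{dnp}$, yielding the desired $\norm{\mathcal{E}}\leq C_2\sqrt{dnp}$.

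For the second and third displays, I condition on $A$ and exploit orthogonality of $Z_j^*$: the matrix $M_{ij}:=Z_i^{*\T}\mathw_{ij}Z_j^*$ has i.i.d.\ $\mathcal{N}(0,1)$ entries for every $i<j$, and the Hermitian convention $\mathw_{ji}=\mathw_{ij}^\T$ yields $Z_j^{*\T}\mathw_{ji}Z_i^*=M_{ij}^\T$. The $\{M_{ij}\}_{i<j}$ are mutually independent (one per edge), so cross terms vanish on taking $\mathbb{E}_W$, giving
\begin{align*}
\mathbb{E}_W\sum_i\fnorm{\sum_{j\neq i}A_{ij}(M_{ij}-M_{ij}^\T)}^2 &= 2d(d-1)\sum_{i\neq j}A_{ij},\\
\mathbb{E}_W\sum_i\fnorm{\sum_{j\neq i}A_{ij}\mathw_{ij}Z_j^*}^2 &= d^2\sum_{i\neq j}A_{ij},
\end{align*}
from $\mathbb{E}\fnorm{M-M^\T}^2=2d(d-1)$ and $\mathbb{E}\fnorm{M}^2=d^2$ for a standard $d\times d$ Gaussian $M$.

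It remains to control the deviations from these conditional means and to absorb the randomness of $\sum_{i\neq j}A_{ij}$. Both quantities are quadratic forms in the $O(d^2\,|\{(i,j):i<j,\,A_{ij}=1\}|)$ standard-Gaussian coordinates of $\mathw$ restricted to the edge set, so the Hanson--Wright inequality (applied conditionally on $A$) gives deviations of order $\sqrt{d^2 n^2 p\,\log n}$, i.e.\ a multiplicative factor of $(1+O(\sqrt{\log n/(n^2p)}))$. A Bernstein bound for $\sum_{i<j}A_{ij}\sim\mathrm{Binomial}(\binom{n}{2},p)$ pins $\sum_{i\neq j}A_{ij}$ to within the same multiplicative window of $n(n-1)p$ under $np\gtrsim\log n$; combining the two deviations and using $np\geq \log n$ collapses the tolerance to $(1+C_2\sqrt{\log n/n})$, as claimed. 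The main technical obstacle is securing the sharp $\sqrt{dnp}$ rate in the first display rather than the $\sqrt{dnp\log n}$ produced by a naive matrix Bernstein argument: this forces the use of a refined noncommutative Khintchine / variance-profile inequality together with careful conditioning on $A$ so that the sparsity pattern only enters through the concentrated quantity $\max_i\sum_j A_{ij}$.
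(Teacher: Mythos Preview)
Your proposal is correct and in fact supplies substantially more than the paper does: the paper's ``proof'' of this lemma consists entirely of citations to Lemma~4.2 and equations~(59)--(60) of \cite{gao2021optimal}. The route you sketch---Bandeira--van~Handel applied conditionally on $A$ for the operator norm, and Hanson--Wright (again conditionally on $A$) for the two Frobenius sums after reducing via orthogonal invariance of the Gaussian blocks---is the standard one and is almost certainly what the cited reference does as well.

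Two minor quantitative remarks, neither of which affects your conclusion. First, the subdominant term in Bandeira--van~Handel is $\sigma_*\sqrt{\log(nd)}$ with $\sigma_*=1$ here, not $\sqrt{d\log(nd)}$; this only makes the absorption into $\sqrt{dnp}$ easier. Second, your stated Hanson--Wright deviation ``of order $\sqrt{d^2n^2p\,\log n}$'' underestimates the true fluctuation: a direct variance computation shows the dominant contribution comes from the diagonal terms $\sum_i\mathrm{Var}(\|Y_i\|_F^2)\asymp d^2\sum_i d_i^2\asymp d^2n^3p^2$, so the sub-Gaussian deviation is actually $\asymp dnp\sqrt{n\log n}$. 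You should also check the sub-exponential regime of Hanson--Wright by bounding the operator norm of the quadratic-form matrix; since each edge block $\mathw_{ij}$ enters exactly two of the $Y_i$'s, one gets $\|B\|\lesssim\max_i d_i\lesssim np$, and the resulting sub-exponential tail contributes only $np\log n$. Both deviations, divided by the mean $\asymp d^2n^2p$, are at most $\sqrt{\log n/n}$, so your final multiplicative tolerance $(1+C_2\sqrt{\log n/n})$ stands.
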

\begin{proof}%
The first inequality is from Lemma 4.2 of \cite{gao2021optimal}. The second and third inequalities are from (59) and (60), together with Lemma 4.3, of  \cite{gao2021optimal}, respectively.
\end{proof}

\begin{proof}[Proof of Proposition \ref{prop:eigenspace_perturbation}]
By Lemma \ref{lem:combine} and Lemma \ref{lem:od_operator_norm_W},  there exist constants $c_1,c_2>0$ such that when $\frac{np}{\log n}>c_1$, we have $\norm{A-\E A}\leq c_2\sqrt{np}$ and $\norm{(A\otimes J_d)\circ \mathw}\leq c_2\sqrt{dnp}$ with probability at least $1-6n^{-10}$. By Lemma \ref{lem:no_additive_noise_od} and Lemma \ref{lem:A_related}, we have $\lambda^*_1=\lambda^*_d \geq (n-1)p - c_2\sqrt{np}$, $\max\{|\lambda^*_{d+1}|,|\lambda^*_n|\}\leq p + c_2\sqrt{np}$, and  $\lambda^*_d -\lambda_{d+1}^*\geq np - 2c_2\sqrt{np}$. Note that $d$ is a constant. When $\frac{np}{\log n}$ and $\frac{np}{\sigma^2}$ are greater than some sufficiently large constant, we have $4\sigma \norm{(A\otimes J_d)\circ \mathw} \leq np/2 \leq  \min\{\lambda^*_d,\lambda^*_d -\lambda_{d+1}^*\}$ satisfied. Since $\mathx - (A\otimes J_d)\circ Z^*Z^{*\H} = \sigma (A\otimes J_d) \circ \mathw$, a direct application of Lemma \ref{lem:eigenspace_perturbation} leads to
\begin{align*}
&\inf_{O\in\matho(d)} \norm{U-\tilde UO} \\
&\leq \frac{8\sqrt{2}}{3(\lambda_1^* -\lambda_{d+1}^*)} \Bigg(\br{\frac{4}{3(\lambda_1^* - \lambda_{d+1}^*)}+\frac{2}{\lambda_1^*}} \sigma^2\norm{(A\otimes J_d) \circ \mathw}^2  \\
&\quad + \frac{\max\{|\lambda^*_{d+1}|,|\lambda^*_n|\}}{\lambda_1^*}\sigma\norm{(A\otimes J_d) \circ \mathw}\Bigg)\\
& = \frac{8\sqrt{2}}{3(np/2)} \br{\br{\frac{4}{3(np/2)}+\frac{2}{np/2}} \sigma^2c_2^2dnp + \frac{p+c_2\sqrt{np}}{np/2}\sigma c_2\sqrt{dnp}}\\
&\leq c_3 \frac{\sigma^2 d + \sigma \sqrt{d}}{np},
\end{align*} 
for some constant $c_3>0$.
\end{proof}

\section{Proof of Theorem \ref{thm:od}}\label{sec:proof_od_thm}

We first state useful technical lemmas. They are analogs of Lemma \ref{lem:re_exponential} and Lemma \ref{lem:x_normalize_y_diff}, respectively. Lemma \ref{lem:od_re_exponential}  is proved in (31) of \cite{gao2021optimal}.

\begin{lemma}\label{lem:od_re_exponential}
There exists some constant $C > 0$ such that for any $\rho$ that satisfies $\frac{\rho^2np}{d^2\sigma^2} \geq  C$ , we
\begin{align*}
\sum_{i=1}^n\mathbb{I}\left\{\frac{2\sigma}{np}\norm{\sum_{j\in[n]\backslash\{i\}}A_{ij}\mathw_{ij}Z_j^*}>\rho\right\} \leq \frac{\sigma^2}{\rho^2p}\exp\left(-\sqrt{\frac{\rho^2 np}{\sigma^2}}\right), 
\end{align*}
with probability at least $1-\ebr{-\sqrt{\frac{\rho^2 np}{\sigma^2}}}$.
\end{lemma}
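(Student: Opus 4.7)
The plan is to follow the strategy of Lemma \ref{lem:re_exponential} (proved as (31) of \cite{gao2021exact}), replacing the scalar Bernstein-type tail with a Gaussian matrix concentration bound and carefully tracking the extra factor of $d$ this introduces.

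First I would condition on $A$. Since $Z_j^*\in\matho(d)$ preserves the standard matrix Gaussian law under right multiplication, the matrices $\{\mathw_{ij}Z_j^*\}_{j\neq i}$ are iid $\mathcal{MN}(0,I_d,I_d)$ and independent of $A$ (for $j<i$, $\mathw_{ij}=\mathw_{ji}^\T$ is again standard matrix Gaussian). Hence, conditional on $A$,
\[
S_i\;:=\;\sum_{j\neq i}A_{ij}\mathw_{ij}Z_j^*\;\sim\;\mathcal{MN}(0,\,m_iI_d,\,I_d),\qquad m_i:=\sum_{j\neq i}A_{ij},
\]
so $S_i$ is a $d\times d$ matrix with iid $\mathcal{N}(0,m_i)$ entries.

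Second, I would invoke the standard Gaussian matrix concentration bound $\p\br{\norm{S_i}>\sqrt{m_i}(2\sqrt{d}+t)\mid A}\le \ebr{-t^2/2}$ together with a Chernoff bound on the Bernoulli sums giving the ``good event'' $\mathcal{G}:=\{m_i\le 2np\ \forall i\}$ with probability at least $1-n\ebr{-np/3}$, which is overwhelming under $np\gtrsim \log n$. On $\mathcal{G}$, choosing $t$ so that $\sqrt{m_i}(2\sqrt{d}+t)=\rho np/(2\sigma)$ forces $t\ge \rho\sqrt{np}/(4\sigma)$ provided $y:=\rho^2np/\sigma^2\ge Cd^2$ for a sufficiently large absolute constant $C$ (here the factor $d^2$ appears precisely to ensure we are deep in the tail $t\gg 2\sqrt{d}$). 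This gives
\[
\p\br{\tfrac{2\sigma}{np}\norm{S_i}>\rho\,\big|\,A}\;\le\;\ebr{-y/32} \quad \text{on } \mathcal{G}.
\]

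Third, let $N:=\sum_{i=1}^n\indic{(2\sigma/np)\norm{S_i}>\rho}$ and set $M:=(\sigma^2/(\rho^2p))\ebr{-\sqrt{y}}=(n/y)\ebr{-\sqrt{y}}$. Markov's inequality applied conditional on $A$ yields, on $\mathcal{G}$,
\[
\p\br{N>M\mid A}\;\le\;\frac{\E[N\mid A]}{M}\;\le\;\frac{n\,\ebr{-y/32}}{M}\;=\;y\,\ebr{\sqrt{y}-y/32}.
\]
A direct check shows $y\,\ebr{\sqrt{y}-y/32}\le \tfrac12\ebr{-\sqrt{y}}$ once $y$ exceeds a universal constant, which is guaranteed by $y\ge Cd^2\ge C$ for $C$ large. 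Combining with $\p(\mathcal{G}^c)\le \tfrac12\ebr{-\sqrt{y}}$ (which also follows from $n\ebr{-np/3}$ being negligible in the regime of interest, possibly after enlarging $C$) gives the claimed probability bound.

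The main obstacle is the reparameterization in the second step: the $\ebr{-t^2/2}$ tail must be used with $t\gg 2\sqrt{d}$ so that the decay rate ends up of order $\ebr{-y/32}$ rather than the weaker $\ebr{-c\sqrt{y}}$ one would get by settling for $t\asymp\sqrt{d}$. This is exactly why the hypothesis is $\rho^2np/(d^2\sigma^2)\ge C$ rather than $\rho^2np/\sigma^2\ge C$, the extra factor of $d^2$ reflecting the $\E\norm{G}\le 2\sqrt{d}$ cost of the matrix-valued concentration relative to its scalar counterpart.
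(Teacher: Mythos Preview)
The paper does not prove this lemma; it simply cites equation~(31) of \cite{gao2021optimal}. Your sketch therefore supplies more than the paper does, and the overall architecture---condition on $A$, use Gaussian operator-norm concentration for each $S_i$, then apply Markov's inequality to the count $N$---is the natural route and almost certainly what the cited argument does.

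There is, however, one genuine gap. You dispose of the bad event $\mathcal{G}^c=\{\exists i:m_i>2np\}$ by asserting $n\ebr{-np/3}\le\tfrac12\ebr{-\sqrt{y}}$ with $y=\rho^2np/\sigma^2$, claiming this follows ``in the regime of interest, possibly after enlarging $C$.'' But the \emph{only} hypothesis of the lemma is $y\ge Cd^2$; nothing ties $np$ to $y$. If $\sigma\to0$ with $n,p,\rho$ fixed, then $y\to\infty$ while $n\ebr{-np/3}$ stays constant, and the inequality you need fails outright---enlarging $C$ only pushes $y$ up, not $np$. The cleanest fix is to drop the global good event and bound the unconditional tail directly: each entry of $S_i$ is a sum of independent $\mathrm{Bernoulli}(p)\times\mathcal N(0,1)$ products, whose MGF satisfies $\E\ebr{\lambda B G}=1-p+p\ebr{\lambda^2/2}\le\ebr{p\lambda^2}$ for $|\lambda|\le\sqrt{2}$, yielding a Bernstein-type tail $\pbr{|[S_i]_{ab}|>t}\le 2\ebr{-c\min(t^2/(np),\,t)}$ without any conditioning on $A$. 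This (or a sub-Gaussian matrix-Bernstein variant) then feeds into your Markov step and covers all admissible $y$. As a side remark, your second step only needs $y\ge Cd$ to force $t\gg 2\sqrt{d}$; the extra factor of $d$ in the hypothesis $y\ge Cd^2$ is not actually consumed by your argument as written.
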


\begin{lemma}[Lemma 2.1 of \cite{gao2021optimal}]\label{lem:od_normalization}
Let $X,\tilde{X}\in\mathbb{R}^{d\times d}$ be two matrices of full rank. Then,
$$\fnorm{\mathcal{P}(X)-\mathcal{P}(\tilde{X})}\leq \frac{2}{s_{\min}(X)+s_{\min}(\tilde{X})}\fnorm{X-\tilde{X}}.$$
\end{lemma}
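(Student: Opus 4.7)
The plan is to reduce the bound to a Sylvester-type operator estimate by exploiting the polar factorization. Since both matrices are of full rank, I would write $X = PH$ and $\tilde X = \tilde P \tilde H$ with $P := \mathp(X)$, $\tilde P := \mathp(\tilde X)$ orthogonal and $H := \sqrt{X^\T X}$, $\tilde H := \sqrt{\tilde X^\T \tilde X}$ symmetric positive definite; in particular $s_{\min}(H) = s_{\min}(X)$ and $s_{\min}(\tilde H) = s_{\min}(\tilde X)$, so the denominator in the target bound matches exactly the symmetric factors. Writing $E := X - \tilde X$ and $Y := \tilde P^\T P \in \matho(d)$, orthogonality of $\tilde P$ gives $\fnorm{Y - I_d} = \fnorm{\tilde P^\T (P - \tilde P)} = \fnorm{P - \tilde P}$, so it suffices to bound $\fnorm{Y - I_d}$ by $\frac{2}{s_{\min}(H) + s_{\min}(\tilde H)} \fnorm{E}$.

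The key is to derive a Sylvester-type identity for $Y - I_d$. I would compute $\tilde P^\T X + \tilde X^\T P$ in two different ways: substituting $X = PH$ and $\tilde X^\T = \tilde H \tilde P^\T$ gives $YH + \tilde H Y$, while splitting $X = \tilde X + E$ and using the symmetric polar identities $P^\T X = H$ and $\tilde P^\T \tilde X = \tilde H$ gives $H + \tilde H + \tilde P^\T E - E^\T P$. Equating and subtracting $H + \tilde H$ produces
\begin{align*}
(Y - I_d) H + \tilde H (Y - I_d) = \tilde P^\T E - E^\T P.
\end{align*}
Now I would invoke the standard Sylvester lower bound: the map $Z \mapsto \tilde H Z + Z H$ on $\mathr^{d\times d}$ is self-adjoint with respect to the Frobenius inner product (as $H, \tilde H$ are symmetric), with eigenvalues $\{\lambda_i(\tilde H) + \lambda_j(H)\}_{i,j\in[d]}$, all at least $s_{\min}(H) + s_{\min}(\tilde H) > 0$. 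Hence $\fnorm{\tilde H Z + Z H} \geq (s_{\min}(H) + s_{\min}(\tilde H)) \fnorm{Z}$, and applying this to $Z = Y - I_d$ together with $\fnorm{\tilde P^\T E - E^\T P} \leq 2 \fnorm{E}$ delivers the claimed estimate.

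The step I expect to be most delicate is spotting the right symmetric combination $\tilde P^\T X + \tilde X^\T P$. The antisymmetric analogue $\tilde P^\T X - \tilde X^\T P$ leads instead to the Sylvester \emph{difference} operator $Z \mapsto \tilde H Z - Z H$, whose eigenvalues $\lambda_i(\tilde H) - \lambda_j(H)$ can vanish (e.g., when $H$ and $\tilde H$ share spectrum), so it would not yield a coercivity constant in terms of $s_{\min}(X) + s_{\min}(\tilde X)$. The plus-sign choice both guarantees a strictly positive lower bound and produces exactly the sum appearing in the denominator of the target inequality; once this identity is in hand, the remaining manipulations are straightforward.
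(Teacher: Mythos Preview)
Your argument is correct. The Sylvester identity $(Y-I_d)H+\tilde H(Y-I_d)=\tilde P^\T E-E^\T P$ is derived cleanly, the coercivity bound $\fnorm{\tilde H Z+ZH}\geq (s_{\min}(H)+s_{\min}(\tilde H))\fnorm{Z}$ follows because the Sylvester operator is self-adjoint for the Frobenius inner product with spectrum $\{\lambda_i(\tilde H)+\lambda_j(H)\}$, and the orthogonality of $P,\tilde P$ gives the final $2\fnorm{E}$ bound. Note that the present paper does not supply its own proof of this lemma; it is quoted verbatim from \cite{gao2021optimal}, so there is no in-paper argument to compare against. Your route is in fact the classical one for polar-factor perturbation bounds (cf.\ Li, \emph{SIAM J.\ Matrix Anal.\ Appl.}\ 1995), and your closing remark about why the ``minus'' combination fails is a nice sanity check on the choice of symmetric combination.
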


\begin{proof}[Proof of Theorem \ref{thm:od}]
Let  $O\in\matho(d)$ satisfy $\|U-\tilde UO\|=\inf_{O'\in\matho(d)}\|{U-\tilde UO'}\|$. Define $\Delta := U-\tilde UO\in\mathr^{nd\times d}$.  Recall $\check u$ is
the leading eigenvector of $A$. From Proposition \ref{prop:no_additive_noise}, Proposition \ref{prop:eigenspace_perturbation}, Lemma \ref{lem:combine}, and Lemma \ref{lem:od_operator_norm_W},
there exist constants $c_1,c_2>0$ such that if $\frac{np}{\log n},\frac{np}{\sigma^2}>c_1$, we have
\begin{align}
\norm{\Delta} &\leq c_2 \frac{\sigma^2d+\sigma\sqrt{d}}{np}, \label{eqn:od_new_proof_4}\\
\max_{j\in[n]}\abs{\check u_j - \frac{1}{\sqrt{n}}b_2}&\leq c_2\br{\sqrt{\frac{\log n}{np}} + \frac{1}{\log (np)}}\frac{1}{\sqrt{n}},\label{eqn:od_new_proof_5}\\
\norm{A-\E A} &\leq c_2\sqrt{np},\label{eqn:od_new_proof_6}\\
\norm{(A\otimes J_d)\circ \mathw}&\leq c_2\sqrt{npd},\label{eqn:od_new_proof_7}\\
\sum_{i=1}^n\fnorm{\sum_{j\in[n]\backslash\{i\}}A_{ij}\left(Z_i^{*\T}\mathw_{ij}Z_j^*-Z_j^{*\T}\mathw_{ji}Z_i^*\right)}^2 &\leq 2d(d-1)n^2p\left(1+c_2\sqrt{\frac{\log n}{n}}\right), \label{eqn:rev_12}\\
 \sum_{i=1}^n\fnorm{\sum_{j\in[n]\backslash\{i\}}A_{ij}\mathw_{ij}Z_j^*}^2 &\leq d^2n^2p\left(1+c_2\sqrt{\frac{\log n}{n}}\right),\label{eqn:rev_13}
\end{align}
with probability at least $1-n^{-9}$, for some $b_2\in\{-1,1\}$. By Lemma \ref{lem:no_additive_noise_od} and Lemma \ref{lem:A_related}, we have $\lambda^*_1=\lambda^*_d$, $|\lambda_d^* -  (n-1)p|\leq  c_2\sqrt{np}$, $\abs{\lambda^*_{d+1}}\leq p + c_2\sqrt{np}$, and  $\lambda^*_d -\lambda_{d+1}^*\geq np - 2c_2\sqrt{np}$.

Using the same argument as (\ref{eqn:od_proof_6}) and (\ref{eqn:od_proof_7}) in the proof of Lemma \ref{lem:eigenspace_perturbation}, we can have an explicit expression for $\tilde U$. Recall the definition of $\tilde U$ in (\ref{eqn:tilde_U_def}).  Let $\mathx U^* =GDN^\T$ be its SVD where $ G \in \matho(nd,d)$, $ N \in\matho(d)$, and $ D\in\mathr^{d\times d}$ is a diagonal matrix with singular values. By the decomposition (\ref{eqn:mathX_matrix_form}), we have
\begin{align}\label{eqn:od_new_proof_8}
\mathx U^* = ((A\otimes J_d)\circ Z^*Z^{*\T}) U^* + \sigma ((A\otimes J_d)\circ\mathw)U^*= \lambda_1^*U^* + \sigma ((A\otimes J_d)\circ\mathw)U^*.
\end{align}
Since the diagonal entries of $D$ correspond to the leading singular values of $\mathx U^*$, Weyl's inequality leads to $\max_{j\in[d]}|D_{jj} - \lambda_1^*| \leq  \sigma \norm{(A\otimes J_d)\circ\mathw} \leq c_2\sigma \sqrt{dnp}.$ Denote
\begin{align}\label{eqn:rev_17}
t :=  p + c_2\sqrt{np} +c_2\sigma\sqrt{d np}.
\end{align}
We then have
\begin{align}\label{eqn:od_new_proof_12}
\max_{j\in[d]}|D_{jj} - np| \leq  p + t.
\end{align}
When $\frac{np}{\log n},\frac{np}{d\sigma^2}$ are greater than some sufficiently large constant, we have $np/2 \leq \lambda^*_1$ and $np/2 \leq D_{jj}\leq 3np/2$ for all $j\in[d]$. As a consequence, all the diagonal entries of $D$ are positive.
Then $\tilde U$ can be written as
\begin{align*}
\tilde U = \mathx U^* S,
\end{align*}
where
\begin{align}\label{eqn:od_new_proof_9}
S:= ND^{-1}N^\T\in\mathr^{d\times d}.
\end{align}
Then (\ref{eqn:od_new_proof_12}) leads to
\begin{align}\label{eqn:rev_7}
 \norm{\frac{1}{np}I_d - S} = \norm{\frac{1}{np} I_d -D^{-1}} \leq \frac{1}{np -t} - \frac{1}{np}\leq \frac{2t}{(np)^2},
\end{align}
and
\begin{align}\label{eqn:rev_8}
\norm{S}=\norm{D^{-1}} \leq \frac{2}{np}.
\end{align}

Using (\ref{eqn:od_new_proof_8}), we have the following decomposition for $U$:
\begin{align*}
U = \tilde U O + \Delta = \mathx U^* SO  + \Delta = \br{\lambda_1^* U^* +  \sigma ((A\otimes J_d)\circ\mathw)U^*}SO + \Delta.
\end{align*}
Recall the definition of $U^*$ in (\ref{eqn:U_star_def}). Define $\Delta^*:= U^* - \frac{1}{\sqrt{n}}Z^*b_2$. When $\frac{np}{\log n}\geq 2c_2^*$, by the same argument used to derive (\ref{eqn:new_proof_10}) as in the proof of Theorem \ref{thm:phase_main}, we have
\begin{align}
\norm{\Delta^*} &= \norm{Z^*\circ\br{\check u\otimes \one_d - \frac{1}{\sqrt{n}}\one_n\otimes \one_d b_2}} = \norm{\check u\otimes \one_d - \frac{1}{\sqrt{n}}\one_n\otimes \one_d} =\sqrt{d} \norm{\check u -\frac{1}{\sqrt{n}} \one_n b_2}\nonumber\\
&\leq \frac{2c_2\sqrt{np} + 2p}{np} \sqrt{d}.\label{eqn:od_new_proof_13}
\end{align}
Then $U$ can be further decomposed into
\begin{align*}
U = \br{\lambda_1^* U^* +  \sigma ((A\otimes J_d)\circ\mathw)\br{ \frac{1}{\sqrt{n}}Z^*b_2 + \Delta^*}}SO + \Delta.
\end{align*}
For any $j\in[n]$, denote $[(A\otimes J_d)\circ\mathw]_{j\cdot}\in\mathr^{d\times nd}$ as the submatrix corresponding to its rows from the $((j-1)d+1)$th to the $(jd)$th. Note that  $SO\in\mathr^{d\times d}$. Then $U_j$ has an expression:
\begin{align*}
U_j &= \br{\lambda_1^* U_j^* +\frac{\sigma}{\sqrt{n}}  [(A\otimes J_d)\circ\mathw]_{j\cdot} Z^* b_2 + \sigma  [(A\otimes J_d)\circ\mathw]_{j\cdot}  \Delta^*} SO + \Delta_j\\
&=  \br{\lambda_1^* Z_j^* \check u_j +\frac{\sigma}{\sqrt{n}} \sum_{k\neq j} A_{jk}\mathw_{jk}Z^*_k b_2 + \sigma  [(A\otimes J_d)\circ\mathw]_{j\cdot}  \Delta^*} SO + \Delta_j,
\end{align*}
where $\Delta_j\in\mathr^{d\times d}$ is denoted as the $j$th submatrix of $\Delta$.

Note that we have  following properties for the mapping $\mathp$. For any $B\in\mathr^{d\times d}$ of full rank and any $F\in\matho(d)$, we have $\mathp(BF)=\mathp(B)F$. In addition, if $B$ is positive-definite, $\mathp(B)=I_d$. Since we have shown the diagonal entries of $D$ are all lower bounded by $np/2$, (\ref{eqn:od_new_proof_9}) leads to $\mathp(S)=I_d$. Then
\begin{align*}
\fnorm{\hat Z_j -Z^*_j O b_2}  = \fnorm{\mathp(U_j) -Z^*_j O b_2} = \fnorm{\mathp(Z_j^{*\T}U_j O^\T b_2) -I_d}.
\end{align*}
We have
\begin{align*}
Z_j^{*\T}U_j O^\T b_2 = \br{\lambda_1^* \check u_j  b_2 I_d + \frac{\sigma}{\sqrt{n}}\Xi_j  + \sigma  b_2Z_j^{*\T} [(A\otimes J_d)\circ\mathw]_{j\cdot}  \Delta^*} S + Z_j^{*\T}\Delta_j O^\T b_2
\end{align*}
where 
\begin{align*}
\Xi_j := \sum_{k\neq j}A_{jk} Z^{*\T}_j \mathw_{jk} Z^*_k.
\end{align*}
Note that  from (\ref{eqn:od_new_proof_5}), we have
\begin{align*}
 b_2 \check u_j \geq  \br{1-c_2\br{\sqrt{\frac{\log n}{np}} + \frac{1}{\log (np)}}}\frac{1}{\sqrt{n}}.
\end{align*}
As long as $\frac{np}{\log n}$ is greater than some sufficiently large constant,
we have
$
 b_2 \check u_j  \geq \frac{1}{2\sqrt{n}}.
$
Since $\lambda_1^*$ is also positive, we have
\begin{align}\label{eqn:rev_15}
\frac{Z_j^{*\T}U_j O^\T b_2}{\lambda_1^* \check u_j  b_2}= S  + T_j
\end{align}
where $T_j$ is defined as
\begin{align*}
T_j &:= \frac{1}{\lambda_1^* \check u_j  b_2} \br{\br{ \frac{\sigma}{\sqrt{n}}\Xi_j + \sigma b_2Z_j^{*\T} [(A\otimes J_d)\circ\mathw]_{j\cdot}  \Delta^*} S + Z_j^{*\T}\Delta_j O^\T b_2}\\
& = \frac{1}{\lambda_1^* \check u_jb_2 } \frac{\sigma}{\sqrt{n}}\Xi_j S  +  \frac{\sigma b_2Z_j^{*\T} [(A\otimes J_d)\circ\mathw]_{j\cdot}  \Delta^* S}{\lambda_1^* \check u_j  b_2}  +  \frac{ Z_j^{*\T}\Delta_j O^\T b_2}{\lambda_1^* \check u_j  b_2}.
\end{align*}
As a consequence, when $\det(U_j)\neq 0$, we have
\begin{align}\label{eqn:rev_16}
\fnorm{\hat Z_j -Z^*_j Ob_2}  = \fnorm{\mathp\br{\frac{Z_j^{*\T}U_j O^\T b_2}{\lambda_1^* \check u_j  b_2}} -I_d} &= \fnorm{\mathp\br{S+ T_j } - I_d}.
\end{align}

Let $0 < \gamma,\rho < 1/8$ whose values will be determined later.  To simplify $\|\hat Z_j -Z^*_j Ob_2\|_{\text{F}} $, consider the following two cases.

(1) If
\begin{align}\label{eqn:od_new_proof_10}
\norm{\frac{1}{\lambda_1^* \check u_jb_2 } \frac{\sigma}{\sqrt{n}}\Xi_j S } &\leq  \frac{\gamma}{np}\\
\norm{ \frac{\sigma b_2Z_j^{*\T} [(A\otimes J_d)\circ\mathw]_{j\cdot}  \Delta^* S}{\lambda_1^* \check u_j  b_2} } &\leq \frac{\rho}{np}\nonumber\\
\norm{  \frac{ Z_j^{*\T}\Delta_j O^\T b_2}{\lambda_1^* \check u_j  b_2}} &\leq \frac{\rho}{np}\label{eqn:od_new_proof_11}
\end{align}
all hold, then 
\begin{align*}
s_{\min}(S+ T_j) &\geq s_{\min}(S) - \norm{T_j}  =s_{\min}(D^{-1}) - \norm{T_j} = D_{11}^{-1}- \norm{T_j} \\
&\geq D_{11}^{-1} - \frac{\gamma + 2\rho}{np},
\end{align*}
which is greater than 0 by (\ref{eqn:od_new_proof_12}). Together with (\ref{eqn:rev_15}), we have $\det(U_j)\neq 0$.
The same lower bound holds for $s_{\min}(S+ (T_j+T_j^\T)/2)$. Since $S$ is positive-definite, we have  $\mathp(S+ (T_j+T_j^\T)/2)=I_d$.
By Lemma \ref{lem:od_normalization} and (\ref{eqn:rev_16}), we have
\begin{align*}
&\fnorm{\hat Z_j -Z^*_j O b_2} \\
& =\fnorm{\mathp\br{S+ T_j } - \mathp\br{S + \frac{T_j+T_j^\T}{2}}} \\
&\leq \frac{1}{\br{D_{11}^{-1} - \frac{\gamma+2\rho}{np}}}\fnorm{\frac{T_j - T_j^\T}{2}}\\
&\leq \frac{1}{\lambda_1^* \check u_jb_2 } \frac{1}{2\br{D_{11}^{-1} - \frac{\gamma+2\rho}{np}}} \Bigg( \frac{\sigma}{\sqrt{n}}\fnorm{\Xi_j S - S^\T \Xi_j^\T} +2\fnorm{\sigma b_2Z_j^{*\T} [(A\otimes J_d)\circ\mathw]_{j\cdot}  \Delta^* S} \\
&\quad + 2\fnorm{ Z_j^{*\T}\Delta_j O^\T b_2}  \Bigg).
\end{align*}
We can further simplify the first term in the  display above. We have
\begin{align*}
\fnorm{\Xi_j S - S^\T \Xi_j^\T}& = \fnorm{\frac{1}{np}\br{\Xi_j - \Xi_j^\T} -\Xi_j\br{\frac{1}{np}I_d- S } +(\frac{1}{np}I_d - S^\T )\Xi_j^\T  } \\
&\leq  \frac{1}{np} \fnorm{\Xi_j - \Xi_j^\T} + 2\norm{\frac{1}{np}I_d - S}\fnorm{\Xi_j}.
\end{align*}
Using (\ref{eqn:rev_7}) and (\ref{eqn:rev_8}),
we have
\begin{align*}
\fnorm{\hat Z_j -Z^*_j  O b_2}  &\leq  \frac{1}{\lambda_1^* \check u_jb_2 } \frac{1}{2\br{D_{11}^{-1} - \frac{\gamma+2\rho}{np}}} \Bigg( \frac{\sigma}{\sqrt{n}} \frac{1}{np} \fnorm{\Xi_j - \Xi_j^\T}  +\frac{\sigma}{\sqrt{n}}  \frac{t}{(np)^2}\fnorm{\Xi_j} \\
&\quad + \frac{4}{np}\sigma \fnorm{ [(A\otimes J_d)\circ\mathw]_{j\cdot}  \Delta^*} +2\fnorm{ \Delta_j}  \Bigg).
\end{align*}
Using the lower bounds for $\lambda_1^*$, $\check u_jb_2$, and $D_{11}^{-1}$, as given at the beginning of this proof, we have
\begin{align*}
&\fnorm{\hat Z_j -Z^*_j  O b_2}  \\
&\leq \frac{1}{\br{np-p-c_2\sqrt{np}} \br{1-c_2\br{\sqrt{\frac{\log n}{np}} + \frac{1}{\log (np)}}}\br{\frac{1}{np+t}- \frac{\gamma+2\rho}{np}}} \frac{\sigma}{2np}\fnorm{\Xi_j - \Xi_j^\T} \\
&\quad + \frac{4\sigma t}{(np)^2}\fnorm{\Xi_j}+ \frac{16\sigma\sqrt{n}}{np} \fnorm{ [(A\otimes J_d)\circ\mathw]_{j\cdot}  \Delta^*}  +16\sqrt{n} \fnorm{ \Delta_j}.
\end{align*}
Let $\eta>0$ whose value will be given later. By the same argument as used in the proof of Theorem \ref{thm:phase_main}, we have
\begin{align*}
&\fnorm{\hat Z_j -Z^*_j  O b_2}^2\\
&\leq \frac{1+\eta}{\br{np-p-c_2\sqrt{np}}^2 \br{1-c_2\br{\sqrt{\frac{\log n}{np}} + \frac{1}{\log (np)}}}^2\br{\frac{1}{np+t}- \frac{\gamma+2\rho}{np}}^2} \frac{\sigma^2}{4(np)^2}\fnorm{\Xi_j - \Xi_j^\T}^2 \\
&\quad + 3(1+\eta^{-1})\frac{16\sigma^2 t^2}{(np)^4}\fnorm{\Xi_j}^2+  3(1+\eta^{-1})\frac{256\sigma^2n}{(np)^2} \fnorm{ [(A\otimes J_d)\circ\mathw]_{j\cdot}  \Delta^*}^2 \\
&\quad  + 3(1+\eta^{-1})64n \fnorm{ \Delta_j}^2.
\end{align*}

(2) If any one of (\ref{eqn:od_new_proof_10})-(\ref{eqn:od_new_proof_11}) does not hold, we simply upper bound $\normf{\hat Z_j -Z^*_j \tilde Qb_2} $ by $2\sqrt{d}$. Then this case can be written as
\begin{align*}
&\fnorm{\hat Z_j -Z^*_j  O b_2}^2\\
&\leq 4d \Bigg( \indic{\norm{\frac{1}{\lambda_1^* \check u_jb_2 } \frac{\sigma}{\sqrt{n}}\Xi_j S } > \frac{\gamma}{np}} + \indic{\norm{ \frac{\sigma b_2Z_j^{*\T} [(A\otimes J_d)\circ\mathw]_{j\cdot}  \Delta^* S}{\lambda_1^* \check u_j  b_2} } > \frac{\rho}{np}} \\
&\quad + \indic{\norm{  \frac{ Z_j^{*\T}\Delta_j O^\T b_2}{\lambda_1^* \check u_j  b_2}} > \frac{\rho}{np}}\Bigg).
\end{align*}
Using (\ref{eqn:rev_8}), $\lambda_1^*\geq np/2$, and $\check u_j b_2\geq 1/(2\sqrt{n})$, we have
\begin{align*}
&\fnorm{\hat Z_j -Z^*_j  O b_2}^2 \\
&\leq 4d \br{ \indic{ 8\sigma \norm{\Xi_j } \geq \gamma np} +\indic{8\sqrt{n}\sigma \norm{ [(A\otimes J_d)\circ\mathw]_{j\cdot}  \Delta^*} \geq \rho np} + \indic{4\sqrt{n}\norm{\Delta_j}\geq \rho}}\\
&\leq  4d \br{ \indic{ 8\sigma \norm{\Xi_j } \geq \gamma np} +  \frac{64\sigma^2 n}{(\rho np)^2}\fnorm{ [(A\otimes J_d)\circ\mathw]_{j\cdot}  \Delta^*}^2 + 16n\rho^{-2}\fnorm{\Delta_j}^2 }.
\end{align*} 

Combining these two cases together, we have
\begin{align*}
&\fnorm{\hat Z_j -Z^*_j  O b_2}^2 \\
&\leq  \frac{1+\eta}{\br{np-p-c_2\sqrt{np}}^2 \br{1-c_2\br{\sqrt{\frac{\log n}{np}} + \frac{1}{\log (np)}}}^2\br{\frac{1}{np+t}- \frac{\gamma+2\rho}{np}}^2} \frac{\sigma^2}{4(np)^2}\fnorm{\Xi_j - \Xi_j^\T}^2 \\
&\quad + 3(1+\eta^{-1})\frac{16\sigma^2 t^2}{(np)^4}\fnorm{\Xi_j}^2+  3(1+\eta^{-1})\frac{256\sigma^2n}{(np)^2} \fnorm{ [(A\otimes J_d)\circ\mathw]_{j\cdot}  \Delta^*}^2 \\
&\quad + 3(1+\eta^{-1})64n \fnorm{ \Delta_j}^2\\
&\quad + 4d \br{ \indic{ 8\sigma \norm{\Xi_j } \geq \gamma np} +  \frac{64\sigma^2 n}{(\rho np)^2}\fnorm{ [(A\otimes J_d)\circ\mathw]_{j\cdot}  \Delta^*}^2 + 16n\rho^{-2}\fnorm{\Delta_j}^2 } \\
&\leq \frac{1+\eta}{\br{np-p-c_2\sqrt{np}}^2 \br{1-c_2\br{\sqrt{\frac{\log n}{np}} + \frac{1}{\log (np)}}}^2\br{\frac{1}{np+t}- \frac{\gamma+2\rho}{np}}^2} \frac{\sigma^2}{4(np)^2}\fnorm{\Xi_j - \Xi_j^\T}^2 \\
&\quad + 3(1+\eta^{-1})\frac{16\sigma^2 t^2}{(np)^4}\fnorm{\Xi_j}^2 +  4d  \indic{ 8\sigma \norm{\Xi_j } \geq \gamma np} \\
&\quad + \frac{256\sigma^2n}{(np)^2}\br{3(1+\eta^{-1}) + d\rho^{-2}}\fnorm{ [(A\otimes J_d)\circ\mathw]_{j\cdot}  \Delta^*}^2 \\
&\quad+ 64n \br{3(1+\eta^{-1})+d\rho^{-2}}\fnorm{\Delta_j}^2.
\end{align*}
As a result, we have
\begin{align*}
&\ellod(\hat Z,Z^*)\\
&\leq  \frac{1}{n}\sum_{j\in[n]}\fnorm{\hat Z_j -Z^*_j  O b_2}^2 \\
&\leq  \frac{1+\eta}{\br{np-p-c_2\sqrt{np}}^2 \br{1-c_2\br{\sqrt{\frac{\log n}{np}} + \frac{1}{\log (np)}}}^2\br{\frac{1}{np+t}- \frac{\gamma+2\rho}{np}}^2} \\
&\quad\times \frac{\sigma^2}{4(np)^2} \frac{1}{n}\sum_{j\in[n]}\fnorm{\Xi_j - \Xi_j^\T}^2\\
&\quad + 3(1+\eta^{-1})\frac{16\sigma^2 t^2}{(np)^4} \frac{1}{n}\sum_{j\in[n]}\fnorm{\Xi_j}^2 +  4d   \frac{1}{n}\sum_{j\in[n]}\indic{ 8\sigma \norm{\Xi_j } \geq \gamma np} \\
&\quad + \frac{256\sigma^2}{(np)^2}\br{3(1+\eta^{-1}) + d\rho^{-2}} \sum_{j\in[n]}\fnorm{ [(A\otimes J_d)\circ\mathw]_{j\cdot}  \Delta^*}^2\\
&\quad + 64 \br{3(1+\eta^{-1})+d\rho^{-2}} \sum_{j\in[n]}\fnorm{\Delta_j}^2.
\end{align*}

In the rest of the proof, we are going to simplify the display above. Specifically, we are going to upper bound $\sum_{j\in[n]}\|{\Xi_j - \Xi_j^\T}\|_{\rm F}^2$, $\sum_{j\in[n]}\fnorm{\Xi_j}^2 $, $\sum_{j\in[n]}\indic{ 8\sigma \norm{\Xi_j } \geq \gamma np}$, $ \sum_{j\in[n]}\fnorm{ [(A\otimes J_d)\circ\mathw]_{j\cdot}  \Delta^*}^2$, and $\sum_{j\in[n]}\fnorm{\Delta_j}^2$.

For $\sum_{j\in[n]}\|{\Xi_j - \Xi_j^\T}\|_{\rm F}^2$ and $\sum_{j\in[n]}\fnorm{\Xi_j}^2 $, note that they are the left-hand sides of (\ref{eqn:rev_12}) and (\ref{eqn:rev_13}), respectively. Hence, they can be upper bounded by the right-hand sides of (\ref{eqn:rev_12}) and (\ref{eqn:rev_13}), respectively. For $\sum_{j\in[n]}\indic{ 8\sigma \norm{\Xi_j } \geq \gamma np}$, according to 
Lemma \ref{lem:od_re_exponential}, if  $\frac{\gamma^2 np}{d^2\sigma^2} >c_3$ for some $c_3>0$, we have
\begin{align*}
\sum_{j\in[n]}\indic{ 8\sigma \norm{\Xi_j } \geq \gamma np}  &\leq \frac{16\sigma^2}{\gamma^2 p}\ebr{-\sqrt{\frac{\gamma^2np}{16\sigma^2}}}
\end{align*}
with probability at least $1-\ebr{-\sqrt{\frac{\gamma^2np}{16\sigma^2}}}$. When $c_3$ is sufficiently large, it follows that 
\begin{align*}
\frac{16\sigma^2}{\gamma^2 np}\ebr{-\sqrt{\frac{\gamma^2np}{16\sigma^2}}} \leq \br{\frac{\sigma^2}{\gamma^2np}}^3
\end{align*}
by the same argument as in the proof of Theorem \ref{thm:phase_main}. For $ \sum_{j\in[n]}\fnorm{ [(A\otimes J_d)\circ\mathw]_{j\cdot}  \Delta^*}^2$,  we have
\begin{align*}
 \sum_{j\in[n]}\fnorm{ [(A\otimes J_d)\circ\mathw]_{j\cdot}  \Delta^*}^2&=\fnorm{ (A\otimes J_d)\circ\mathw  \Delta^*}^2 \\
 & \leq \norm{ (A\otimes J_d)\circ\mathw  }^2\fnorm{\Delta^*}^2 \\
 &\leq d\norm{ (A\otimes J_d)\circ\mathw  }^2\norm{\Delta^*}^2  \\
 & \leq c_2 d\br{\sqrt{dnp} \frac{2c_2\sqrt{np} + 2p}{np} \sqrt{d}}^2,
\end{align*}
where in the second to  last inequality we use the fact that $\Delta^*$ is rank-$d$ and in the last inequality we use (\ref{eqn:od_new_proof_13}). For $ \sum_{j\in[n]}\fnorm{\Delta_j}^2$, we have  $\sum_{j\in[n]}\fnorm{\Delta_j}^2 =\fnorm{\Delta}^2\leq d\norm{\Delta}^2\leq d\br{c_2 \frac{\sigma^2d+\sigma\sqrt{d}}{np}}^2$ where the last inequality is due to (\ref{eqn:od_new_proof_4}).

Using the above results, we have
\begin{align*}
&\ellod(\hat Z,Z^*)\\
&\leq  \frac{1+\eta}{\br{np-p-c_2\sqrt{np}}^2 \br{1-c_2\br{\sqrt{\frac{\log n}{np}} + \frac{1}{\log (np)}}}^2\br{\frac{1}{np+t}- \frac{\gamma+2\rho}{np}}^2} \\
&\quad \times \frac{\sigma^2}{4(np)^2} 2d(d-1)np\left(1+c_2'\sqrt{\frac{\log n}{n}}\right)\\
&\quad + 3(1+\eta^{-1})\frac{16\sigma^2 t^2}{(np)^4} d^2np\left(1+c_2'\sqrt{\frac{\log n}{n}}\right)+  4d   \br{\frac{\sigma^2}{\gamma^2np}}^3\\
&\quad + \frac{256\sigma^2}{(np)^2}\br{3(1+\eta^{-1}) + d\rho^{-2}} c_2d\br{\sqrt{dnp} \frac{2c_2\sqrt{np} + 2p}{np} \sqrt{d}}^2 \\
&\quad+ 64 \br{3(1+\eta^{-1})+d\rho^{-2}}d\br{c_2 \frac{\sigma^2d+\sigma\sqrt{d}}{np}}^2.
\end{align*}
Note that $\frac{1}{(1-x)^2}\leq 1 + 16x$ for any $0\leq x\leq \frac{1}{2}$. When $\frac{np}{\log n}$ is greater than some sufficiently large constant, we have $\br{1-c_2\br{\sqrt{\frac{\log n}{np}} + \frac{1}{\log (np)}}}^{-2}\leq 16c_2\br{\sqrt{\frac{\log n}{np}} + \frac{1}{\log (np)}} $ and $\br{1-c_2\frac{1}{\sqrt{np}} -\frac{1}{n}}^{-2}\leq 16\br{c_2\frac{1}{\sqrt{np}} +\frac{1}{n}}$. When $\frac{np}{d\sigma^2}$ is also greater than some sufficiently large constant, we have $\br{\frac{np}{np+t} - \gamma -2\rho}^{-2}\leq 16\br{\frac{t}{np+t} + \gamma +2\rho}\leq 16\br{\frac{t}{np} + \gamma +2\rho} \leq 16\br{\frac{p + c_2\sqrt{np} +c_2\sigma\sqrt{d np}}{np} + \gamma +2\rho} $, using the definition of $t$ in (\ref{eqn:rev_17}). We then have
\begin{align*}
&\ellod(\hat Z,Z^*)\\
&\leq 16^3c_2(1+\eta) \br{c_2\frac{1}{\sqrt{np}} +\frac{1}{n}}\br{\sqrt{\frac{\log n}{np}} + \frac{1}{\log (np)}} \br{\frac{p + c_2\sqrt{np} +c_2\sigma\sqrt{d np}}{np} + \gamma +2\rho} \\
&\quad \times \left(1+c_2'\sqrt{\frac{\log n}{n}}\right) \frac{d(d-1)\sigma^2}{2np}\\
&\quad + 3(1+\eta^{-1})\br{\frac{p + c_2\sqrt{np} +c_2\sigma\sqrt{d np}}{np}}^2\left(1+c_2'\sqrt{\frac{\log n}{n}}\right)\frac{16}{np} \frac{d^2\sigma^2}{np}  \\
&\quad + 4\gamma^{-6}\br{\frac{\sigma^2}{np}}^2 \frac{d\sigma^2}{np} + 256c_2\br{3(1+\eta^{-1}) + d\rho^{-2}} \br{\frac{2c_2}{\sqrt{np}} +\frac{2}{n\sqrt{np}}}^2 \frac{d^2\sigma^2 }{np}\\
&\quad + 64 \br{3(1+\eta^{-1})+d\rho^{-2}} \br{c_2\frac{\sigma\sqrt{d}+1}{\sqrt{np}}}^2 \frac{d^2\sigma^2}{np}.
\end{align*}
After rearrangement, there exists some constant $c_5>0$ such that
\begin{align*}
\ellod(\hat Z,Z^*)&\leq \Bigg(1+c_5\Bigg(\eta+\gamma + \rho +\sqrt{\frac{\log n}{np}} + \frac{1}{\log (np)}  + \gamma^{-6}\br{\frac{\sigma^2}{np}}^2+\sqrt{\frac{d\sigma^2}{np}} \\
&\quad + \br{\eta^{-1} + d\rho^{-2}}\br{\frac{1+d\sigma^2}{np}} \Bigg)\Bigg)\frac{d(d-1)\sigma^2}{2np}.
\end{align*}
We can take $\gamma^2 =\sqrt{d^2\sigma^2/np}$ 
(then $\frac{\gamma^2 np}{d^2\sigma^2}>c_3$ is guaranteed as long as $\frac{np}{d^2\sigma^2}>c_3^2$). We also take $\rho^2 = \sqrt{(d+d\sigma^2)/np}$ and let $\eta =\rho^2$. They are guaranteed to be smaller than $1/8$ when $\frac{np}{d}$ and $\frac{np}{d^2\sigma^2}$ are greater than some large constant.
Then,  there exists some constant $c_6>0$ such that
\begin{align*}
\ellod(\hat Z,Z^*)&\leq \Bigg(1+c_5\Bigg( \br{\frac{d+d\sigma^2}{np}}^\frac{1}{2} + \br{\frac{d^2\sigma^2}{np}}^\frac{1}{4}+ \br{\frac{d+d\sigma^2}{np}}^\frac{1}{4} +\sqrt{\frac{\log n}{np}} + \frac{1}{\log (np)}\\
&\quad + d^{-3} \br{\frac{\sigma^2}{np}}^\frac{1}{2}  +\sqrt{\frac{d\sigma^2}{np}} + (1+d)\sqrt{\frac{np}{d + d\sigma^2}}\br{\frac{1+d\sigma^2}{np}} \Bigg)\Bigg)\frac{d(d-1)\sigma^2}{2np}\\
&\leq \br{1+ c_6\br{ \br{\frac{d+d^2\sigma^2}{np}}^\frac{1}{4} + \sqrt{\frac{\log n}{np}} + \frac{1}{\log (np)}} }\frac{d(d-1)\sigma^2}{2np}.
\end{align*}
This holds with probability at least $1-n^{-9}-\ebr{-\frac{1}{32}\br{\frac{np}{\sigma^2}}^\frac{1}{4}}$.

\end{proof}

\section{Calculation for (\ref{eqn:rev_11})}\label{sec:v_calculation}
Recall the definitions of $Y^*$ and $Y$ in (\ref{eqn:rev_10}). First, we are going to show $v$, the leading eigenvector of $Y$, must be a linear combination of $e_1$ and $e_2$. Note that for any unit vector $x=(x_1,\ldots,x_n)^\T\in\mathr^n$, we have
\begin{align*}
x^\T Y x &= x^\T Y^* x + x^\T (Y-Y^*) x =\br{-\sum_{2\leq j\leq n} x_j^2 } +  \frac{\delta}{2}(x_1+x_2)^2 = -1 + x_1^2 +  \frac{\delta}{2}(x_1+x_2)^2.
\end{align*}
If $x$ maximizes the right-hand side over the unit sphere, it is obvious that neither $x_1$ nor $x_2$ can be  0. In addition, $x_1x_2 \geq 0$ and $x_1^2+x_2^2=1$ must be satisfied; otherwise the right-hand side can be made strictly larger. Then we can write $v=\alpha e_1 + \sqrt{1-\alpha^2} e_2$ where $\alpha\in[0,1]$. Since $Yv =  \frac{\delta}{2}(\alpha + \sqrt{1-\alpha^2}) e_1 + \br{\frac{\delta}{2}(\alpha + \sqrt{1-\alpha^2})  -\sqrt{1-\alpha^2}}e_2$, we have
\begin{align*}
\frac{\alpha}{ \frac{\delta}{2}(\alpha + \sqrt{1-\alpha^2})} = \frac{\sqrt{1-\alpha^2}}{\br{\frac{\delta}{2}(\alpha + \sqrt{1-\alpha^2})  -\sqrt{1-\alpha^2}}}.
\end{align*}
After rearrangement, this gives $\delta(2\alpha^2-1) =2\alpha\sqrt{1-\alpha^2}$ which means $\alpha^2>\frac{1}{2}$. Squaring it yields the equation 
$4(1+\delta^2)\alpha^4 -4(1+\delta^2)\alpha^2 +\delta^2=0$ whose solution is $\alpha^2 = \frac{1}{2}\br{1\pm \frac{1}{\sqrt{1+\delta^2}}}$.  Since $\alpha^2>\frac{1}{2}$, we have   $\alpha^2 = \frac{1}{2}\br{1+ \frac{1}{\sqrt{1+\delta^2}}}$. Hence,
\begin{align*}
v=&\sqrt{\frac{1}{2}\br{1+ \frac{1}{\sqrt{1+\delta^2}}}}e_1 + \sqrt{\frac{1}{2}\br{1- \frac{1}{\sqrt{1+\delta^2}}}} e_2.
\end{align*}
We can verify it is the eigenvector of $Y$ corresponding to the eigenvalue $\frac{1}{2}({\delta +\sqrt{1+\delta^2}-1})$.
\end{document}